\newtheorem{theorem}{Theorem}[section]
\newtheorem{lemma}[theorem]{Lemma}
\newtheorem{proposition}[theorem]{Proposition}
\theoremstyle{definition}
\newtheorem{definition}[theorem]{Definition}
\theoremstyle{remark}
\newtheorem{remark}[theorem]{Remark}
\numberwithin{equation}{section}
  \newcommand{\cJ}{{\mathcal J }}
  \newcommand{\cF}{{\mathcal F}}
  \newcommand{\cE}{{\mathcal E}}
   \newcommand{\cN}{{\mathcal N}}
  \renewcommand{\cD}{{\mathcal D}}
  \newcommand{\cU}{{\mathcal U}}
  \newcommand{\sfG}{{\mathcal G }}
  \newcommand{\cC}{{\mathcal C }}
\newcommand{\cG}{{\mathcal G }}
\newcommand{\cT}{{\mathcal T }}
\newcommand{\cS}{{\mathcal S }}
  \newcommand{\bU}{{\mathbf U }}
   \newcommand{\diff}{{\mathbf {Diff} }}
   \newcommand{\ba}{\begin{eqnarray}}
   \newcommand{\na}{\end{eqnarray}}
\newcommand{\Gl}{{\mathbf  {Gl}}}
  \newcommand{\C}{{\mathbb C}}
  \newcommand{\R}{{\mathbb R}}
  \newcommand{\Z}{{\mathbb Z}}
\newcommand{\K}{{\mathbb K}}
   \newcommand{\Mod}{\mathrm{Mod}}
  \renewcommand{\a}{\alpha}
  \renewcommand{\b}{\beta}
  \renewcommand{\c}{\gamma}
\def \sfM{\mathsf{M}}
\def \sfGl{\mathsf{Gl}}
  \newcommand{\nin}{\noindent}
\def \mc{\mathcal}
\begin{document}

\title{Gluing principle  for orbifold stratified spaces}

  \author{Bohui Chen, An-Min Li}
  \address{School of mathematics\\
  Sichuan University\\
 Chengdu, China}
  \email{bohui@cs.wisc.edu \\
  math$\_$li@yahoo.com.cn}

  \author{Bai-Ling Wang}
  \address{Department of Mathematics\\
  Australian National University\\
  Canberra ACT 0200 \\
  Australia}
  \email{bai-ling.wang@anu.edu.au}

\subjclass{}
\date{}



   \begin{abstract}   In this paper, we  explore the theme of orbifold stratified spaces and    establish a general criterion for them to be  smooth orbifolds. This criterion utilizes  the notion of linear stratification on the gluing bundles for the   orbifold stratified spaces. We introduce a concept of  good gluing structure to ensure a smooth structure on the stratified space.    As an application, we provide an  orbifold  structure on the coarse moduli space  $\overline{M}_{g, n}$ of stable genus $g$  curves with $n$-marked points.
   Using  the gluing theory  for   $\overline{M}_{g, n} $ associated to   horocycle structures, there is  a  natural  
   orbifold gluing atlas on  $\overline{M}_{g, n} $. We  show this gluing atlas can be refined to provide  a good orbifold  gluing structure and hence a  smooth orbifold structure on $\overline{M}_{g,n}$.  This general gluing principle will be very useful  in  the study of the gluing theory for the compactified moduli spaces of stable pseudo-holomorphic curves in a symplectic manifold.     
    \end{abstract}

   \maketitle

  \tableofcontents

\section{Introduction and statements of main theorems}

Assuming that we have the {\em transversality}  for each moduli space of stable maps in a symplectic  manifold 
$(X, \omega)$ with a domain of a fixed topological type, the
 compactified  moduli space $\overline{M}_{g, n}(X, A, \omega, J)$ of stable maps in a symplectic manifold   $(X, \omega)$ is usually a compact Hausdorff space stratified by smooth orbifolds for a compatible almost complex structure $J$ and $A\in H_2(X, \Z)$.  This moduli space is an example of the so-called     {\em  orbifold stratified spaces}  in this paper, which  is a disjoint union of locally closed smooth orbifolds indexed by a partial ordered  set.   
 A topological orbifold structure on $\overline{M}_{g, n}(X, A,\omega,J)$ can be obtained (for example, cf. \cite{FO99}). Since then, there are further interests in study the smooth orbifold  structure $\overline{M}_{g, n}(X, A,\omega,J)$.  There is a similar issue for  other  compactified moduli spaces arising from geometric elliptic partial differential equations. The central part in  the study of these moduli spaces 
is the gluing analysis for the lower stratum.  The motivational  question   is how much  the gluing analysis we need in order  to obtain  a smooth structure on the moduli space.  
In this paper, we  establish a general  criterion  for an  orbifold stratified space to admit a smooth structure based on the  gluing theory.
 
 \begin{definition}  \label{def:1.1} An  $n$-dimensional
  {\em  orbifold   stratified space}   is  a  topological space $M$ which
admits a  stratification
\ba\label{eq:2.1}
 M =\bigsqcup_{\a\in \cS}  M_\a,
\na
a disjoint union of  locally closed subspaces  (called strata) indexed by a partially ordered set $(\cS, \prec)$
 such that   \begin{enumerate}
 \item The decomposition (\ref{eq:2.1})  is locally finite in the sense that each point $x\in M$ has a  neighbourhood $U_x$  such
 that $U_x\cap M_\a$ is empty except for finitely many $\a$.
\item Denote by $ \overline M_\b$ the closure of $M_\b$ in $M$. Then  $ M_\a \cap \overline M_\b \neq \emptyset \Longleftrightarrow M_\a \subset \overline  M_\b  \Longleftrightarrow  \a \prec \b.$
\item For each $\a \in \cS$, the stratum $M_\a$ has a smooth  orbifold structure given by a proper \'etale Lie groupoid
\[
 \mathsf M_\a =(M_\a^1 \rightrightarrows M_\a^0),
 \]
 that is,  $M_\a$ is  the orbit space of  $\mathsf M_\a$, also called the coarse space of $\mathsf M_\a$.
  \item The top stratum of $M$ is an $n$-dimensional smooth orbifold.
\end{enumerate}
\end{definition}
 
 In this paper, we assume that $M$ is compact and $\mc S$ is finite.
Motivated by the gluing analysis  for  the
 compactified  moduli space, such as  $\overline{M}_{g, n}(X, A, \omega, J)$, we propose a notion of linearly stratified 
 vector spaces and linearly stratified  vector bundles in Section 2. A prototype of a  linearly stratified 
 vector space is $V = \C^m$ with a stratification  given by
  \[
 V =\bigsqcup_{I\in \cS}  V^{(I)}
 \]
 with respect to the naturally ordered power set 
 \[
 \cS = 2^{\{1, 2, \cdots, m\}} = \{ I \subset \{1, 2, \cdots, m\}\}.
 \] 
 Here $V^{[I]} = \{ (t_1, t_2, \cdots, t_m)|  t_i \neq 0 \Leftrightarrow i\in I\}$,   whose closure in $V$ is a linear subspace of dimension $|I|$. One could recover the topological structure
or smooth structure on $V$ by identifying each linear  tubular neighbourhood  of $ V^{(I)} $ in $V$ with  a  normal bundle $N(V^{(I)})$  of $ V^{(I)} $  in $V$ .  See Definitions \ref{linear:strata} and  \ref{rmk:2.2} for precise definitions of  linearly stratified 
 vector spaces and linearly stratified  vector bundles. 
 This notion of linear stratifications can be generalised to an orbifold vector space  (an Euclidean vector spaces with a linear action of a finite group) and an orbifold vector bundles. 

In order to obtain a smooth orbifold structure on orbifold stratified space $M$ with respect to a finite partially ordered index set $\cS$, we first   assume that the disjoint union
\[
\mathsf M = \left( \bigsqcup_{\a\in \cS}  M_\a^1 \rightrightarrows  \bigsqcup_{\a\in \cS}  M_\a^0\right) 
\]
is a topological groupoid for simplicity.  This assumption can be dropped in actual applications. We   introduce  a   gluing datum   on
$M$, see Section 2 for details,  which  briefly consists  of 
\begin{enumerate}
\item  an orbifold linearly stratified smooth bundle  $ \Gl^\a  \longrightarrow 
 \mathsf M_\a$ with respect to the index set $\cS^\a =\{\b \in \cS | \beta\prec\alpha\}$, called a gluing bundle for each $\a \in \cS$; 
 \item and a  stratum-preserving  strict groupoid homomorphism $\phi^\a:   \Gl^\a (\epsilon)|_{\mathsf U} \to \mathsf M$, called a gluing map,  for each   open (full) subgroupoid
 $\mathsf U$ of $\mathsf M_\a$, here a metric $\mathfrak y$ is chosen on  $ \Gl^\a (\epsilon)|_{\mathsf U}$,  so the $\epsilon$-neighbourhood of   the zero section of  $\Gl^\a |_{\mathsf U}$,  is defined; moreover, the gluing map 
 is required to satisfy the following  conditions:
 \begin{enumerate}
\item $\phi^\a$ is a Morita  equivalence of   topological groupoids  from $ \Gl^\a (\epsilon)|_{\mathsf U} $ to the full-subgroupoid generated by the image of $\phi^\a$ in $\mathsf M$;
\item for each $\beta\in \cS^\a$,  the stratum-wise gluing map 
\[
\phi^\a_\b:   \Gl^\a (\epsilon)|_{\mathsf U} \to \mathsf M_\b
\]
is a Morita   equivalence of Lie groupoids  from $ \Gl^\a_\b (\epsilon)|_{\mathsf U} $ to the full-subgroupoid generated by the image of $\phi^\a$ in $\mathsf M_\b$;.
\end{enumerate}
\item a collection of  stratum-preserving smooth bundle isomorphism maps which preserve the induced  stratifications
$$
\Phi^\alpha_\beta: N({\sfGl^\alpha_\beta})\to \sfGl^\beta
$$
that covers $\phi^\alpha_\beta$ for any $\beta\in \mc S^\alpha$.  

We remark that a Morita  equivalence of proper \'etale  Lie groupoid ensures that these two Lie groupoids
are locally isomorphic, hence define the same orbifold structure on their coarse spaces. 
\end{enumerate}

Note that as $ \Gl^\a (\epsilon)|_{\mathsf U}$  is a smooth orbifold  bundle over a smooth orbifold $\mathsf U$, the gluing datum over  $\mathsf U$ defines a smooth orbifold structure on the image of the {\em coarse gluing  map}
\[
|\phi^\a| :   | \Gl^\a (\epsilon)|_{\mathsf U}|   \to  |\mathsf M| = M.
\]
A  gluing atlas  $\cF$ of $M$ is a collection of gluing data such  that  the images of all the coarse gluing  maps in $\cF$ form   an open cover of $M$. We show that a 
gluing atlas defines a canonical topological orbifold structure on $M$.  In order to achieve a smoothly compatible gluing atlas,   we further impose three conditions on a gluing atlas $\cF$ in Section 2:
\begin{enumerate}
\item[(i)] $\cF$ is closed under the induction for restriction maps and gluing maps;
\item[(ii)] $\cF$ satisfies the sewing properties;
\item[(iii)]  $\cF$ satisfies  an  inward-extendibility  condition. 
\end{enumerate}
The resulting atlas will be called  
 a {\em good orbifold gluing structure}.
 
The main theorem of  this paper is the following canonical orbifold structure on
 orbifold   stratified space $M$ equipped  with  a good orbifold gluing structure.
  \vspace{2mm}

\nin{\bf Theorem A} (Theorem \ref{orb_structure} and Remark \ref{HS:morphism})    {\em  Let $M$ be an 
   orbifold   stratified space with a good orbifold gluing structure
  $\mc F$,
   Then $M$ admits a smooth  orbifold structure such that
   each stratum is a smooth sub-orbifold.}

\vspace{2mm}

    In practice, one could  just 
 assume that $M$ is a  disjoint union of smooth orbifolds  indexed by a partially ordered set. Then the  topology on $M$ can be obtained as  a by-product of the underlying gluing data where we drop the condition of homeomorphisms of gluing maps and replace it with locally bijective maps on the set-theoretical level.  This remark is particularly  important when in applications, we often encounter  that the disjoint union
\[
\left( \bigsqcup_{\a\in \cS}  M_\a^1 \rightrightarrows  \bigsqcup_{\a\in \cS}  M_\a^0\right) 
\]
does not a priori form a  topological groupoid.  We have to replace this by a set-theoretically Morita equivalent  groupoid which  admits   a   topological or smooth orbifold structure on $M$.  
Another  technical issue is how to achieve the inward-extension property in specific applications such as
in the study of  $\overline{M}_{g, n}(X, A, \omega, J)$. This is the main motivation to develop a general principle for an orbifold stratified space, and will be addressed in a separate paper.

In the rest of the paper, as an application  of Theorem A,  we revisit the orbifold structure on the coarse  moduli space
$\overline M_{g, n}$  of stable curves of genus $g$ with $n$-marked points.   The orbifold structure on
$\overline M_{g, n}$ have been  constructed by various method in algebraic geometry \cite{ACG}  \cite{DM69}  \cite{Knu} \cite{HM}, and
in differential geometry by \cite{FO99} \cite{RoSa}.
Our construction  seems more like an  a
posteriori treatment of the known orbifold structure on $\overline M_{g, n}$. Nevertheless the novelty of our 
construction is to apply the horocycle structures at marked and nodal points to investigate the gluing
datum for $\overline M_{g, n}$. These horocycle structures satisfy the convex property which is vital to get
the inward-extendibility for a good orbifold gluing structure.

In  Section 3,  we give a preliminary review of some basics about the moduli space of stable curves. We employ the universal curves over Teichm\"uller space $\cT_{g, n}$ to get an  orbifold  structure on the  coarse   moduli space $M_{g,n}$.   Proposition \ref{DM:orbi-stra}   is a well-known result about the orbifold stratified structure on the compactified moduli space
$\overline M_{g, n}$  with respect to a partially ordered index set $\cS_{g, n}$ of weighted dual graphs of type $(g, n)$.  We include a proof of this Proposition in order to make this paper as self-contained as possible. In particular, we like to point out that each stratum $M_{[\Gamma]}$ for $[\Gamma] \in \cS_{g, n}$ admits a canonical proper \'etale groupoid $\mathsf M_{[\Gamma]}$, but the disjoint union $\sqcup_{[\Gamma] \in \cS_{g, n}} \mathsf M_{[\Gamma]}$ does not have any topological structure. 

In Section 4, we introduce a 
horocycle structure on a stable nodal Riemann surface $C$ at  a special point $p$ (a marked point or 
a nodal point). 
By a horocycle structure  of $C$ at a smooth point $p$  is a  triple $(\mathfrak y,\delta,h)$, where
$\mathfrak y$ is a metric on  the tangent space $T_pC$,  $\delta>0$ is a small constant and $h$ is a
locally defined 
 smooth map
 \[
 h:  T_p C(\delta) \longrightarrow C 
 \]
 such that $h_p(0) = p$ and the differential of $h$ at the origin is the identity operator. A hyperbolic metric on the punctured Riemann surface obtained by removing these 
 special points  from $C$ defines a canonical horocycle structure on $C$. Horocycle structures used in this paper are small perturbations of those canonical horocycle structure.  We remark  that the convexity property on the collection of  horocycle structures enables us to get a gluing atlas with the required inward-extension property. 
 In this section, we show that each stratum in $\overline M_{g, n}$, there exists a smooth family of horocycle structures in the orbifold sense. 
 
In Section 5,  we employ   the standard grafting construction to get a gluing atlas $\mc{GL}$  
 on $\overline{M}_{g,n}$ using the horocycle structures from Section 4.  
The main result of Section 5  is to  show that $\mc {GL}$ satisfies the inward-extension property.  Then by Theorem
A, $\overline M_{g,n}$ admits a smooth orbifold structure. 
 
Note that  Fukaya and Ono  in \cite{FO99}outlined  a gluing argument  to provide  $\overline M_{g, n}$ with a complex orbifold atlas.  Therefore, we don't claim any originality of this result in  this paper. 
What we have done  in some sense is to provide a full-fledge gluing theory
 for  $\overline M_{g, n}$ which is applicable to  $\overline M_{g, n}$  to get  a smooth orbifold atlas 
 on $\overline M_{g, n}$. In particular, as  we commented earlier, this  good gluing structure    will be very useful in the study of gluing theory for the compactified moduli space of
 stable maps.

\section{Smooth structures on stratified orbifolds}\label{sec:2}
 
In this section, we  will   establish  a
general  criterion for  an orbifold stratified space $M$  to admit a smooth structure.  In fact,
motivated by the gluing theory on   various moduli spaces arising from other geometric problems, we introduce the concept of gluing atlas on stratified space $M$. When $M$ admits a gluing atlas, it has  a  topological orbifold structure on  $M$ automatically.  In order to achieve the smooth compatibility we  introduce the so called "inward-extendibility" condition  (cf. Definition \ref{inward_extension}) on a gluing atlas. We call a gluing atlas with such extension property  a good gluing structure. The main theorem  in this section  is to establish Theorem A in the Introduction,  that  is, 
 $M$ admits a smooth orbifold structure if it has a good gluing structure.
We begin with the manifold stratified space first, as the arguments for  this case can be adapted easily to
orbifold stratified spaces.   

\def \mfky{\mathfrak{y}}

 \subsection{Linearly  stratified (Euclidean) spaces}\label{2.1} \ 

Let $\K$ be the ground field, like $\R$ or $\C$. Denote $\K^\times = \K \backslash \{ 0\}$.   
 Given 
  a linear $\K$-vector space  $V$ of dimension $m$  with  a fixed identification   $V\cong \K^m$ given by  coordinate functions  $(t_1, \cdots, t_m)$. on $V$ associated to a basis.

Let $\cN $ be the power set $2^{\{1,\ldots,m\}}$ with a partial order given by 
the inclusion. 
For any $I\in \cN$,   we set  
\[
V^{[I]} = \{ x\in V |  t_i (x) \neq 0 \Leftrightarrow i\in I\}. 
\]
Since $V^{[I]}\cong (\K^\times)^{|I|}$, we call it a $\K^\times$-space. let  $V^{[\emptyset]}=\{0\}$ and $V^I=\overline{V^{[I]}}$.
Then $V = V^I \times V^{I^c}$. Here $I^c$ is the compliment of $I$ in $\{1,\ldots,m\}$.  The vector  space  $V$  has a canonical  stratification 
\ba\label{can:stra}
V = \bigsqcup_{I\subset  \{1, 2, \cdots, m\}} V^{[I]}
\na
with respect to the power set  $\cN$  in the sense of Definition \ref{def:1.1}. 

The  normal bundle of $V^{[I]}$ in $V$, denoted by
$N(V^{[I]})$, can be canonically  identified with
 \[
N(V^{[I]})= V^{[I]} \times {V^{ I^c }}.
 \]
With  respect to the obvious  inclusion   $N(V ^{[I]})  \subset V$, the canonical stratification of $V$ induces a fiber-wise stratification on the vector bundle 
$N(V^{[I]}) \to  V^{[I]}$.

With these preparations, now we introduce a notion of a linear stratification on a vector space. 

\begin{definition}\label{linear:strata}
Let $V$ be a  $\K$-linear vector space  of dimension $m$   as above. We call it a {\em linearly
 stratified  space }   with respect to a partially indexed set $(\cS, \prec)$  if  $V$ admits   a  stratification  
 \ba\label{strata}
 V = \bigsqcup_{\a\in \cS} V_\a, 
 \na
with $ V_\a  =     \left(  \bigsqcup_{I \in \cJ_\a} V^{[I]} \right) $ for 
 $\mc J_\alpha\subset \mc N$,  such that  for any $\a\in \cS$, all  elements   in $\cJ_\a$  have the same  cardinality.   Denote by $(V, \cS)$ the  linearly  stratified  space $V$ with respect to $\cS$.
Given a linearly stratified  $\K$-vector space $(V, \cS)$, the group  of all invertible
linear transformation which preserves the linear stratification  is denoted by
\[
GL(V, \cS) = \{g\in GL(V) |  g(V_\a) =V_\a, \text{ for any } \a \in \cS  \}, 
\]
called the {\em stratified general linear group} of $(V, \cS)$. 
\end{definition} 
 Note that the stratification (\ref{strata})  is completely determined by the partially index  set $\cS$. The following two  conditions hold  for $(\cS, \prec)$. \begin{enumerate}
\item $2^{\{1, 2, \ldots,m \}} = \bigsqcup_{\a \in \cS} \cJ_\a$, that is, the collection of
$\{\cJ_\a\}_{\a\in \cS}$ for a partition of the power set $\cN$.
\item $\a\prec \b   \Longleftrightarrow \text{for any $I\in \cJ_\a$, there exists 
$J\in \cJ_\b$ such that $I\subset J$}. $
\end{enumerate}
In the following discussion, it might be helpful to keep one example in mind, such as a linear 
 stratification  on $V = \R^2$ with respect to $\cS =\{ \emptyset, \{\{1\}, \{2\}\}$.

Given a   linearly  stratified  space $(V, \cS)$. Let  $
N(V_\a) $ be the component-wise normal bundle of
$V_\a $ in $V$, that is,  
\[
  N(V _\a) =\bigsqcup_{I\in \mc J_\a} N(V ^{[I]}). 
  \]
As each $N(V^{[I]})$ has an induced linear stratification given by 
  \ba\label{normal:I:b}
  (N({V^{[I]}}))_\beta=N({V^{[I]}})\cap 
V_\beta=\bigsqcup_{J\in \cJ_\beta, J\supseteq I} V^{[J]}
\subset V_\beta, 
\na
with respect to $\cS^\alpha=\{\beta|\beta\succeq \alpha\}$. 
This provides a  fibre-wise linear stratification on  vector bundle 
 \[
 N({V_\a })  = \bigsqcup_{ \b \in \cS^\a}  (N(V_\a))_\beta
 \]
   over
 $ V_\a $ with respect to   $\mc S^\a$ with 
  \begin{equation}\label{normal_bundle}
 (N(V_\a))_\beta=\bigsqcup_{I\in\mc J_\a}\left(
 \bigsqcup_{J\in\mc J_\b,J\supseteq I}V^{[J]}\right).
 \end{equation}   
It is easy to see that there exists a  canonical  map 
\begin{equation}\label{bundle_embed}
\tau_{\a\b}: (N(V_\a))_\b\to V_\b 
\end{equation}  
given by the obvious inclusion  (\ref{normal:I:b}) of   each  component  in $V_\beta$  for $ I\in\mc J_\a$. 
 For simplicity, we call a map of this type  a {\em component-wise embedding}.

  \begin{lemma} \label{lemma2.2} Let $(V, \cS)$ be a linearly stratified vector space and 
  $\a\prec\b$ in $\cS$.
 Then
  the normal bundle of $(N(V_\a))_\b$ in
 $N(V_\a)$ is
\begin{equation}\label{nornormal_bundle}
N(N(V_\a)_\b) = \bigsqcup_{I\in\mc J_\a} 
\left(\bigsqcup_{J\in \cJ_\beta, J\supseteq I} 
N({V^{[J]}})\right)
\end{equation}
with a caonical component-wise embedding 
\begin{equation}\label{bunbundle_embed}
\tau^\nu_{\a\b}: N(N(V_\a)_\b)\to N(V_\b).
\end{equation}
   \end{lemma}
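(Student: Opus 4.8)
The plan is to compute everything explicitly using the product decomposition coming from coordinates, since all the bundles in sight are of the simple form $V^{[K]} \times V^{K^c}$ and all the maps are component-wise inclusions. First I would recall the two identifications already established in the excerpt: for any $I \in \mathcal N$ we have $N(V^{[I]}) = V^{[I]} \times V^{I^c}$ sitting inside $V = V^I \times V^{I^c}$, and for $\alpha \prec \beta$ the stratum $(N(V_\alpha))_\beta$ of $N(V_\alpha)$ decomposes, over each component $I \in \mathcal J_\alpha$, as $\bigsqcup_{J \in \mathcal J_\beta, J \supseteq I} V^{[J]}$, where $V^{[J]}$ is viewed inside the fibre $V^{I^c}$ (note $J \supseteq I$ forces $J^c \subseteq I^c$, so $V^{[J]} \subseteq V^{I^c}$ makes sense). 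So the piece of $N(V_\alpha)$ indexed by $I$ is just the linearly stratified vector space $V^{[I]} \times V^{I^c}$ with the product stratification, where $V^{[I]}$ carries only the trivial (single) stratum and $V^{I^c}$ carries the restricted stratification of $V$.

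The key step is then a purely local computation of the normal bundle of $(N(V_\alpha))_\beta$ inside $N(V_\alpha)$, done component by component in $I \in \mathcal J_\alpha$ and then $J \in \mathcal J_\beta$ with $J \supseteq I$. Fixing such $I$ and $J$, the relevant stratum piece is $V^{[J]}$ sitting inside $V^{I^c}$, and its normal bundle in $V^{I^c}$ is the normal bundle of $V^{[J]}$ in $V$ restricted to the factor $V^{I^c}$; because $J \supseteq I$ we have $V = V^I \times V^{J \setminus I} \times V^{J^c}$ and $V^{I^c} = V^{J\setminus I}\times V^{J^c}$, so the normal bundle of $V^{[J]}$ in $V^{I^c}$ is canonically $V^{[J]} \times V^{J^c} = N(V^{[J]})$. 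Since the $V^{[I]}$-factor is untouched (it is a single stratum and the normal direction lies entirely in $V^{I^c}$), taking the total space over the base $V^{[I]}$ reproduces $N(V^{[J]})$ again on the nose (the $V^{[I]}$ factor is absorbed into reindexing the fibres, exactly as in the displayed formula). Summing over all $I \in \mathcal J_\alpha$ and all admissible $J$ yields precisely (\ref{nornormal_bundle}).

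Finally, for the component-wise embedding (\ref{bunbundle_embed}): on each summand $N(V^{[J]})$ indexed by $(I, J)$, the map $\tau^\nu_{\alpha\beta}$ is defined to be the canonical inclusion of $N(V^{[J]})$ as the $J$-component of $N(V_\beta) = \bigsqcup_{J' \in \mathcal J_\beta} N(V^{[J']})$. This is well-defined because every $I \in \mathcal J_\alpha$ with $I \subseteq J$ contributes a copy of the same $N(V^{[J]})$, all mapping to the single component $N(V^{[J]}) \subset N(V_\beta)$; one checks it covers $\tau_{\alpha\beta}$ on base spaces, so it is a bundle map over (\ref{bundle_embed}). I do not expect any serious obstacle here: the statement is essentially bookkeeping, unwinding the product structure of coordinate subspaces. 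The one point requiring a little care — the mild ``hard part'' — is verifying that the identifications are genuinely canonical and compatible, i.e.\ that the two ways of reading $N(N(V_\alpha)_\beta)$ (first take normal in $N(V_\alpha)$, or directly identify fibres) agree, which amounts to the elementary fact that for $I \subseteq J$ the normal space of $V^{[J]}$ in $V$ equals its normal space in $V^{I^c}$ under the splitting $V = V^I \times V^{I^c}$, since $V^I \subseteq V^J = \overline{V^{[J]}}$.
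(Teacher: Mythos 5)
Your proof is correct and takes essentially the same route as the paper's: both rest on the key fact that the normal bundle of $V^{[J]}$ inside $N(V^{[I]})$ is canonically $N(V^{[J]})$ and then sum over $I\in\cJ_\a$ and $J\in\cJ_\b$ with $J\supseteq I$, except that you make the splitting $V=V^I\times V^{J\setminus I}\times V^{J^c}$ explicit where the paper merely asserts it. (One harmless notational slip: $V^{[J]}$ is not literally contained in $V^{I^c}$ --- it sits in $N(V^{[I]})=V^{[I]}\times V^{I^c}$ as $V^{[I]}\times V^{[J\setminus I]}$ --- but your computation of the normal directions is nonetheless right.)
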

 \begin{proof}    Let $I\subset \mc J_\a$
and $J\in \cJ_\beta,$ such that $J\supseteq I$, 
 we have $V^{[J]} \subset N({V^{[I]}})$.
The key fact for this lemma is that the normal bundle of  $V^{[J]}$  in $N(
  {V^{[I]}})$  is same as $N({V^{[J]}})$.  As  
 $ (N({V^{[I]}}))_\beta$ consists of the disjoint union of $V^{[J]}$ for  $J\in \cJ_\beta$ with $
 J\supseteq I$,  this implies that the normal bundle of $ (N({V^{[I]}}))_\beta$ in
  $N({V^{[I]}})$ is given by the
 disjoint union of  the normal bundle $N(V^{[J]})$
 for  $J\in \cJ_\beta, J\supseteq I$,  that is,  
 $$
N( N(V^{[I]}))_\beta=\bigsqcup_{J\in\cJ_\b,J\supseteq I}
N(V^{[J]}).
 $$
 Then \eqref{nornormal_bundle} is an easy consequence of this identity. The canonical  component-wise embedding  is also obvious. 
  \end{proof}


\begin{definition}   \label{rmk:2.2}   A $\K$-vector bundle $E$  over  a smooth manifold $X$  with fiber $V$ is   linearly   stratified
if V  admits a linear  stratification with respect to a partially  ordered index 
set $(\cS, \prec)$ such that  the structure group can be reduced to $GL(V, \cS)$.
\end{definition}

A metric $\mfky$ on $V$  is compatible with the stratification in the sense that the 
coordinate function $V\cong \K^m$ is with respect to an orthonormal basis. Then we can define
\[
SO(V, \cS) =\{g\in SO(V, \mfky)|   g(V_\a) =V_\a, \text{ for any } \a \in \cS  \}. 
\]
We may equip a linearlyly stratified vector bundle $E$ with a compatible metric if the structure group can be reduced from $GL(V, \cS)$.

From Definition \ref{linear:strata},  we know that a   linearly   stratified vector bundle  $E$ has a fiberwise linear stratification with respect to $(\cS, \prec)$
\[
E=\bigsqcup_{\a\in \cS} E_\a. 
\]
Note that $E_\a$ is not a $\K$-vector bundle  as the fiber of $E_\a$ is only a $\K^\times$-vector space. 
There is   a canonical bundle   $N({E_\a}) $ over  $E_\a$, the normal bundle of the inclusion
$E_\a \subset E$,  with the  induced linear  stratification
\[
N( {E_\a})  = \bigsqcup_{\beta\in \cS^\a} (N({E_\a}))_{\b} 
\]
with respect to $\cS^\a$. By Lemma \ref{lemma2.2} we know there is a
component-wise embedding 
\begin{equation}
\tau^\nu_{\a\b}: N(N(E_\a)_\b)\to N(E_\b).
\end{equation}

    Given two   linearly   stratified $\K$-vector bundles $E$ and  $F$  over $X$ with the same index set $(\cS, \prec)$, a bundle map  $\phi:  E\to F$ is called strata-preserving if for any $\a\in \cS$,
\[
\phi(E_\a) \subseteq  F_\a.
\]
 Then one can check that the induced map on the normal bundles
  \[
  \phi:  N({E_\a} ) \to   N({ F_\a})
  \]
  is also strata-preserving.

\subsection{Gluing  principle for manifold   stratified  spaces}\label{2.2}

 An  $m$-dimensional
  manifold   stratified space  is  a topological space  $M$ which
admits a  stratification
\ba\label{eq:2.3}
M =\bigsqcup_{\a\in \cS} M_\a,
\na
a disjoint union of  locally closed smooth manifolds (called strata) indexed by a partially ordered set $(\cS, \prec)$
 such that  
  \begin{enumerate} 
  \item the dimension of its top stratum is $m$; 
 \item  the decomposition (\ref{eq:2.3})  is locally finite in the sense that each point $x\in M$ has a   neighbourhood $U_x$  such
 that $U_x\cap M_\a$ is empty except for finitely many $\a$; 
\item $M_\a \cap \overline M_\b \neq \emptyset \Longleftrightarrow M_\a \subset \overline M_\b  \Longleftrightarrow  \a \prec \b.$
\end{enumerate}
We always assume that $M$ is compact and
$\mc S$ is finite.
 It is easy to see  that 
$$
\overline{M}_\alpha\setminus M_\alpha\subseteq\bigcup_{\beta\in \mc S_\alpha} M_\beta
$$
where $\mc S_\alpha=\{\beta|\beta\prec\alpha\}$.  We
set 
$$
M^\alpha=\bigcup_{\beta\in \mc S^\alpha}M_\beta
$$ 
where $\mc S^\alpha=\{\beta|\alpha\preceq \beta\}$.
Then $M^\alpha$ is a subspace of $M$ that is
stratified by $\mc S^\alpha$.

Motivated by the gluing theory for the moduli spaces of stable maps, we impose the following  conditions on
the stratification (\ref{eq:2.3}).

\vspace{3mm}

 \nin {\bf Condition A:} ({\bf Existence of gluing bundles})   For any $\a\in \cS$, there is a linearly stratified  smooth vector bundle $\Gl^\a $  over $M_\a$ with respect  to $\cS^\a$, write
 \[
\Gl^\a =   \bigsqcup_{\beta\in \cS^\a} \Gl_\beta^\a;
\] moreover the dimension of $\Gl^\a_\b$  agrees with that of $M_\b$. This bundle is called the gluing bundle  over the strata $M_\a$. 
 \begin{remark}
 To be consistent, we allow  $\alpha$ to be the maximum element in $\mc S$, and then 
  $\Gl^\alpha= M_\alpha$, the trivial bundle with zero dimensional fiber. 
 \end{remark}

\vspace{3mm}
The linear stratification on
 $\Gl^\a$  induces   a  linearly stratification on the  normal  bundle of $ \Gl_\beta^\a$ in $ \Gl^\a$
with respect to $\cS^\b$, written as,
$$
N({ \Gl_\beta^\a})  = \bigsqcup_{\gamma\in \cS^\b}
 (N({ \Gl_\beta^\a}))_\gamma  \longrightarrow \Gl_\beta^\a
.$$ By Lemma \ref{lemma2.2}, we know that there are canonical component-wise embeddings
\ba\label{db:include}
\tau_{\b\gamma}: (N({ \Gl_\beta^\a}))_\gamma
 \to { \Gl_\gamma^\a}; \;\;\;
\tau^\nu_{\b\gamma}: N((N({ \Gl_\beta^\a}))_\gamma) 
 \to  N({ \Gl_\gamma^\a}).
\na

For a linearly stratified  vector  bundle $E$ with a compatible metric $\mfky$, we denote its $\delta$-ball bundle by
$E(\delta)$ for any $\delta>0$.  

\begin{definition}[Gluing datum]
Let $U$ be any open  subset of  $M_\alpha$. By a gluing datum over $U$ we mean
a metric $\mfky^\alpha$ on $\Gl^\alpha|_U$, a stratified map (called a gluing map)
$$
\phi^\alpha: \Gl^\alpha(\epsilon)|_U\to M^\alpha
$$
for some constant $\epsilon>0$ and 
a collection of  stratum-preserving smooth bundle isomorphisms 
$$
\Phi^\a_\b: N(\Gl^\a_\b)\to \Gl^\b
$$
such that 
\begin{enumerate}
\item the image of $\phi^\alpha$ is open and the map is  a homeomorphism  onto its image;
\item $\phi^\alpha$ is a stratified smooth map with respect to the induced  stratifications,  i.e, for any 
$\beta\in \mc S^\alpha$, $\phi^\alpha$ maps $\Gl^\alpha_\beta(\epsilon)|_U$ to $M_\beta$, we denote this map by 
 $$
 \phi^\alpha_\beta: \Gl^\alpha_\beta(\epsilon)|_U\to M_\beta,
 $$
 then $\phi^\alpha_\beta$ is a  diffeomorphism  onto its image;
 \item  the bundle isomorphism $\Phi^\a_\b$ covers $\phi^\alpha_\beta$ in the sense that the diagram 
 \[
 \xymatrix{
 N(\Gl^\a_\b)\ar[d]\ar[r]^{\Phi^\a_\b} & \Gl^\b\ar[d]
 \\
  \Gl^\alpha_\beta(\epsilon)|_U\ar[r]^{\phi^\a_\b} & M_\beta
  }
 \]
 commutes.
\end{enumerate}
We simply denote this gluing datum over $U$ by 
$$(U,\mfky^\alpha,\epsilon, \phi^\alpha, \Phi^\a:=\{\Phi^\alpha_\beta\}_{\beta\in \mc S^\alpha})$$ a gluing datum over $U$. 
\end{definition}


\begin{definition}\label{gluing_structure}  Let  ${\mc{GL}}(M) $ be the collection of all gluing data.
Let $\mc F$ be  a subset  of  ${\mc{GL}}(M)$. We call $\mc F$ a {\em gluing atlas}  of $M$
if   the image of the gluing maps in $\cF$ forms an open cover of $M$. 
\end{definition}

\begin{theorem}\label{mfd_structure}
Suppose that a manifold  stratified space $M$ has a gluing atlas
 $\mc F$. Then $M$ admits a canonical topological 
manifold structure defined  by $\mc F$.  \end{theorem}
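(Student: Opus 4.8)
The goal is to produce a topological manifold structure on $M$ from the gluing atlas $\mc F$, so the plan is to use the coarse gluing maps as charts and verify the axioms of a topological manifold: local Euclidean structure, Hausdorffness (already assumed for $M$ as part of being an orbifold stratified space), and second countability (automatic from compactness), with the only real content being that the transition maps between charts are homeomorphisms between open subsets of $\R^m$, and that this structure is canonical. First I would fix a gluing datum $(U,\mfky^\alpha,\epsilon,\phi^\alpha,\Phi^\alpha)$ in $\mc F$ and observe that the total space $\Gl^\alpha(\epsilon)|_U$ is an open subset of the total space of a rank-$(m-\dim M_\alpha)$ smooth vector bundle over the $\dim M_\alpha$-dimensional manifold $U$, hence is itself a smooth $m$-manifold; composing a local bundle chart with $\phi^\alpha$ and using condition (1) that $\phi^\alpha$ is an open homeomorphism onto its image, I get that each point in the image of $\phi^\alpha$ has a neighbourhood homeomorphic to an open subset of $\R^m$. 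Since the images of the gluing maps in $\mc F$ cover $M$, this shows $M$ is locally Euclidean of dimension $m$.

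The main step is the compatibility of overlapping charts. Given two gluing data, say over $U\subset M_\alpha$ and $V\subset M_{\alpha'}$, with images meeting, I would analyse the transition map $(\phi^{\alpha'})^{-1}\circ\phi^\alpha$ on the relevant open set. The key point is that this map is automatically continuous (both $\phi^\alpha$ and $(\phi^{\alpha'})^{-1}$ are homeomorphisms onto/from their images by condition (1)), so its inverse is continuous as well; hence the transition map is a homeomorphism between open subsets of Euclidean space with no further work — this is exactly why a \emph{topological} (as opposed to smooth) structure requires only a gluing atlas and not a good gluing structure. I would then note that since $M$ is already given as a topological space and each $\phi^\alpha$ is a homeomorphism onto an \emph{open} subset of $M$ in its given topology, the manifold topology induced by the atlas coincides with the original topology on $M$; this is the sense in which the structure is canonical, and it also immediately delivers Hausdorffness and (via compactness of $M$) second countability. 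Finally I would remark that different choices of bundle trivialisations and of the cover give the same maximal atlas, so the resulting topological manifold structure is well-defined independent of these auxiliary choices.

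The step I expect to be the genuine obstacle — or at least the one needing care — is not the topology matching but bookkeeping the stratified structure: one must check that $\Gl^\alpha(\epsilon)|_U$, as a stratified space via the fibre-wise linear stratification of $\Gl^\alpha$, maps under $\phi^\alpha$ compatibly with the stratification $M=\bigsqcup_\beta M_\beta$, so that the charts are not merely homeomorphisms but respect strata; this is guaranteed by condition (2) in the definition of a gluing datum, but verifying that the transition maps then restrict to stratum-wise homeomorphisms requires invoking condition (2) for \emph{both} data and the identification of the local models via $\tau_{\beta\gamma}$ from Lemma \ref{lemma2.2}. A secondary subtlety is that the local model $\Gl^\alpha(\epsilon)|_U$ is an open ball bundle rather than the full bundle, so I would restrict attention to the open set where both charts are defined and check that the overlap is open in each local model — this follows from condition (1) (openness of images) together with continuity of the gluing maps. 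Once these points are in place, the theorem follows by the standard manifold-chart gluing argument.
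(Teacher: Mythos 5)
Your proposal is correct and follows essentially the same route as the paper: use the gluing maps as charts (the total space $\Gl^\alpha(\epsilon)|_U$ being an $m$-manifold), cover $M$ by the images, and observe that the transition maps are homeomorphisms of open subsets of $\R^m$ simply because each gluing map is a homeomorphism onto an open image. The extra remarks on Hausdorffness, second countability, and strata-compatibility go slightly beyond what the paper records but do not change the argument.
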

\begin{proof}
Note that given a gluing datum 
$$
(U,\mfky, \epsilon, \phi^\a,\Phi^\a)
$$
the map 
$$
\phi^\a: \Gl^\a(\epsilon)|_U\to M^\a
$$
gives  a manifold  topological  structure on the image of $\phi^\a$, since
$\Gl^\a(\epsilon)|_U$ is a (smooth) manifold. 
For any point $x_\alpha\in M_\alpha$ there exists a small neighborhood $U_\alpha$ that is
proper in $M_\alpha$,  an induced  gluing datum $(U_\alpha,\mfky,\epsilon,
\phi^\alpha,\Phi^\alpha)$  from a gluing datum in $\cF$ by restriction to $U_\a$. Moreover, $U_\a$ can be chosen 
such that $\Gl^\a (\epsilon)|_{U_\a}$ is trivial over $U_\a$ and  is  homeomorphic to   $\R^m$.  Then 
$$
\phi^\alpha: \Gl^\alpha(\epsilon)|_{U_\a} \to M^\alpha\subset M, 
$$
defines a coordinate chart for a neighborhood $U_{x_\a} = \phi^\alpha ( \Gl^\alpha(\epsilon)|_{U_a}) $ 
 of $x_\alpha$ in $M$, denoted by
 \[
 \psi_{x_\a} = (\phi^\alpha)^{-1}:   U_{x_\a}  \longrightarrow  \Gl^\alpha(\epsilon)|_{U_\a}  \cong  \R^m.
  \]
  So   locally we have 
a topological manifold structure on $M$. The transition functions 
\[
\psi_{x_\a}    \circ  \psi_{y_\b} ^{-1}:   \psi_{y_\b} (  U_{x_\a} \cap U_{y_\b} ) 
\longrightarrow  \psi_{x_\a} (  U_{x_\a} \cap U_{y_\b} )
\]
on overlaps  $U_{x_\a} \cap U_{y_\b}$ for  any $y\in M_\b$ are homeomorphisms of open subsets of $\R^m$,  as each  gluing map 
  is a homeomorphism onto its image.  Therefore, $M$ has a canonical topological 
manifold structure.  
\end{proof}
The next condition is motivated by standard gluing theory for moduli spaces.
\vskip 0.1in
\noindent
{\bf Condition B: (Existence of gluing data).} For any $\a\in\mc S$ and
any proper open subset $U\subset M_\a$ there exists a gluing datum over $U$.
\vskip 0.1in
It is clear that Condition B implies trivially  the existence of gluing atlas. 

Now we  come to the smooth structure on $M$ which would follow if we have  a $C^\infty$-compatible gluing atlas 
in the sense that all the transition functions $\psi_{x_\a}    \circ  \psi_{y_\b} ^{-1}$ in the proof of  Proposition
\ref{mfd_structure} are diffeomorphsims of subsets of $\R^m$.  We begin with the following  observations for ${\mc{GL}}(M) $. 
\begin{enumerate}
\item[(i)]  ({\em Induction for restriction maps})  Suppose that $(U,\mfky^\alpha, \epsilon, \phi^\alpha, {\Phi^\alpha})$ is a gluing datum, then
for any open subset $U'\subseteq U$ and $0<\epsilon'\leq \epsilon$, by taking the restriction of maps 
we have an obvious induced gluing datum
$(U',\mfky^\alpha, \epsilon',\phi^\alpha,{\Phi^\alpha})$. Clearly, two coordinate charts are $C^\infty$-compatible if
one of their associated gluing datum is obtained from the restriction of the other gluing datum.

\item[(ii)]  ({\em Induction for   gluing maps})  Suppose that $\alpha\prec\beta$ be a pair in $\mc S$, 
let 
$$
(U,\mfky,\epsilon, \phi^\a,\Phi^\a)
$$
be a gluing datum over $U\subset M_\a$. Fix
any $\b\in \mc S^\a$, and let $D(\a,\b)$ and
$R(\a,\b)$ be the domain and image of $\phi^\a_\b$. Then
for any {\em proper} 
open subset $U^\b\subset R(\a,\b)$ we have  gluing
data over $U^\b$ defined as the following. Note that 
$$
\Phi^\a_\b: N(\Gl^\a_\b)\to \Gl^\b
$$
is a bundle isomorphism covering  
$$
\phi^\a_\b: D(\a,\b)\to R(\a,\b).
$$
The metric $\mfky$ on $\Gl^\a $ induces
a metric on $N(\Gl^\a_\b)$. Under the bundle isomorphism $\Phi_\b^a$, we get a 
metric on  $\Gl^\b|_{R(\a,\b)}$  denoted by $\mfky^\b$. Define $$
\psi:\Gl^\b|_{R(\a,\b)}
\xrightarrow{(\Phi^\a_\b)^{-1}}N(\Gl^\a_\b)|_{D(\a,\b)}
\xrightarrow{\tau}\Gl^\a
$$
to be the composition of $(\Phi^\a_\b)^{-1}$ and the canonical component-wise  embedding $\tau$. 
We can  choose $\epsilon'$ small enough   such that 
$$\psi:\Gl^\beta(\epsilon')|_{U^\b}\to \Gl^\a(\epsilon)|_U$$
is in fact  an embedding.
Set  
\begin{equation}\label{induced_maps}
\phi^\b=\phi^\a\circ\psi,\;\;\;
\Phi^\b_\gamma=\Phi^\a_\gamma\circ\psi.
\end{equation}
Then 
$ 
(U^\b, \mfky^\b, \epsilon', \phi^\b,\Phi^\b)
$ 
is a gluing datum over $U^\b$.  It is also easy to check that two  coordinate charts are $C^\infty$-compatible if
one of their associated gluing datum is obtained  by the induction for the gluing map in the other gluing datum.
This follows from the fact that, under the canonical component-wise embeddings $\tau_{\b\c}$ and $\tau_{\b\c}^\nu$ in (\ref{db:include}), we have
\begin{equation}\label{strongcondition}
\phi^\a=\phi^\b\circ \Phi^\a_\b,\;\;\;
\Phi^\a_\gamma=\Phi^\b_\gamma\circ \Phi^\a_\b.
\end{equation}
for $\a\prec\b\prec\gamma$ in $\cS$. 

\item[(iii)] ({\em Sewing property})  Let 
$$
 (U_k, \mfky_k,\epsilon_k, \phi^\alpha_k, \Phi^\alpha_{k}), k=1,2
 $$
be two gluing data, where $U_1$ and $U_2$
are both open subsets of $M_\a$. We say that they {\em coincide} if, on the intersection domain $V=U_1\cap U_2$,
\begin{itemize}
\item $\mfky_1=\mfky_2$ on $\Gl^\alpha|_V$; and
\item  
$
\phi^\alpha_1=\phi^\a_2, \Phi^\a_{1}=\Phi^\a_{2}$
on common domains. 
\end{itemize}
Given such a pair, it is obvious that we
can  sew  them together to  get a new gluing datum
over $U=U_1\cup U_2$:
\begin{itemize}
\item  $\mfky_{1}$ and $\mfky_2$ together yield a metric 
$\mfky$ on $\Gl^\a$ over $U$, 
\item take 
$
\epsilon<\min(\epsilon_1,\epsilon_2),$
\item  the gluing map
$\phi^\a$ and  the bundle isomorphisms  $\{ \Phi^\a_{\b} \} $ are defined in an obvious way.  
\end{itemize}
Such a gluing datum over $U= U_1\cup U_2$  is called a sewed gluing 
 datum of the  two coincided data. 
\end{enumerate}
 
Motivated by this sewing property, we introduce  the inward-extendibility condition for gluing data over boundary-type
open subsets.

  \begin{definition}
For a stratum $M_\a$, we say that an open subset $U\subset M_\a$ is of boundary-type if $M_\a\setminus U$ is closed in 
$\bar M_\a$.
\end{definition}

\begin{lemma}
Denote $\bar M_\a\setminus M_\a$ by $\partial M_\a$.
Suppose $U$ is an open subset of $M_\a$. Then  $U$ is of boundary-type if and only if
$U\cup \partial M$ is open in $\bar M_\a$.
\end{lemma}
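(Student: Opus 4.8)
The plan is to unwind the two definitions and observe that the complement of $U$ in $\bar M_\a$ is precisely $(\bar M_\a\setminus M_\a)\cup(M_\a\setminus U) = \partial M_\a \cup (M_\a\setminus U)$, so the equivalence reduces to comparing the closedness of $M_\a\setminus U$ in $\bar M_\a$ with the closedness of $\partial M_\a\cup(M_\a\setminus U)$ in $\bar M_\a$. More precisely, $U$ is of boundary-type means, by definition, that $M_\a\setminus U$ is closed in $\bar M_\a$; and $U\cup\partial M_\a$ is open in $\bar M_\a$ means that its complement $\bar M_\a\setminus(U\cup\partial M_\a)$ is closed in $\bar M_\a$. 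So I would first compute that complement.

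The key computation is the set identity $\bar M_\a\setminus(U\cup\partial M_\a) = M_\a\setminus U$. Indeed, $\bar M_\a = M_\a\sqcup\partial M_\a$ is a disjoint decomposition, so removing $\partial M_\a$ from $\bar M_\a$ leaves exactly $M_\a$; then removing $U\subset M_\a$ as well leaves exactly $M_\a\setminus U$. Formally: $x\in\bar M_\a\setminus(U\cup\partial M_\a)$ iff $x\in\bar M_\a$, $x\notin U$, and $x\notin\partial M_\a$; since $x\in\bar M_\a$ and $x\notin\partial M_\a$ forces $x\in M_\a$, and then $x\notin U$ gives $x\in M_\a\setminus U$; conversely any $x\in M_\a\setminus U$ lies in $\bar M_\a$, is not in $\partial M_\a$ (as $M_\a\cap\partial M_\a=\emptyset$), and is not in $U$. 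This establishes the identity.

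With this identity in hand, the lemma is immediate: $U$ is of boundary-type $\iff$ $M_\a\setminus U$ is closed in $\bar M_\a$ $\iff$ $\bar M_\a\setminus(U\cup\partial M_\a)$ is closed in $\bar M_\a$ $\iff$ $U\cup\partial M_\a$ is open in $\bar M_\a$. The only genuinely substantive point — and it is very mild — is the disjointness of the decomposition $\bar M_\a = M_\a\sqcup\partial M_\a$, which is just the definition $\partial M_\a = \bar M_\a\setminus M_\a$ together with the fact that $M_\a\subset\bar M_\a$ (so the union is all of $\bar M_\a$). There is no real obstacle here; the "hard part," if any, is simply being careful that all closedness/openness statements are taken relative to the subspace $\bar M_\a$ rather than relative to $M$, and that no confusion arises between $\partial M_\a$ and the ambient-space notation $\partial M$ appearing in the statement (which I read as a typo for $\partial M_\a$).
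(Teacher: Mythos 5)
Your proof is correct and follows exactly the paper's argument: the paper's entire proof is the set identity $M_\a\setminus U=\bar M_\a\setminus (U\cup\partial M_\a)$, which you establish carefully and then use to translate closedness of $M_\a\setminus U$ into openness of $U\cup\partial M_\a$ in $\bar M_\a$. Your reading of $\partial M$ in the statement as a typo for $\partial M_\a$ is also the right one.
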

\begin{proof}
Note that 
$
M_\a\setminus U=\bar M_\a\setminus (U\cup\partial M).
$ Then the lemma is a consequence of this fact.
\end{proof}
\begin{definition}\label{inward_extension}
Let $A:=(U,\mfky, \epsilon, \phi^\a,\Phi^\a)$ be a gluing datum 
over a boundary-type open subset $U\subset M_\a$. 
We say that  a gluing datum over $M_\a$
 $$
\tilde A:=(M_\a,\tilde\mfky,\tilde\epsilon,\tilde\phi^\a,\tilde\Phi^\a)
$$ 
is an inward-extension of $A$ if there exists
a  boundary-type open subset $U'\subset U$ such that 
$A$  agrees with $\tilde A$ over $U'$.
\end{definition}

\begin{definition}\label{good:gluing_atlas}
A gluing atlas  $\mc F$ is called  a {\em good gluing structure}  of $M$
if $\mc F$ satisfies the following conditions:
\begin{enumerate}
\item[(i)]  $\mc F$ is closed under the induction for  restriction maps and closed under the induction for  gluing maps;
\item[(ii)]  $\mc F$ satisfies the sewing property in the sense that if $\cF$ has  a  pair  gluing data over open subsets  of a stratum of $M$ which coincide over their intersection, then 
their   sewed datum  is also in $\mc F$. 
\item[(iii)] any boundary-type  gluing datum in $\cF$ has an inward-extension in $\cF$. 
\end{enumerate}
\end{definition}
\vskip 0.1in

 \nin {\bf Condition C:} ({\bf Existence of a good gluing structure}) 
 There exists a good  gluing structure  $\cF$ in $\mc{GL}(M)$ for the manifold stratified space $M$. 
 
  \vspace{3mm}
 
  The following lemma implies that the $C^\infty$-compatibility condition can be checked by applying the induction for
  gluing maps.  The proof follows directly from the $C^\infty$-compatibility under the  induction for
  gluing maps.

\begin{lemma}\label{lem_2.6}
Two coodinate charts associated to gluing data
\begin{equation}\label{eqn_2.12}
(U_{\alpha_k},\mfky_k^{\alpha_k},
\epsilon^{\alpha_k},\phi^{\alpha_k}, \Phi^{\alpha_k})
\end{equation}
over $U_{\alpha_k}$ for $k=1,2$ are $C^\infty$-compatible if
for any $\beta\in \mc S^{\alpha_1}\cap \mc S^{\alpha_2}$ 
 their  induced gluing data  obtained from the induction for gluing maps agrees over the  common domain. 
\end{lemma}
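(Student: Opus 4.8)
The plan is to reduce the $C^\infty$-compatibility of the two coordinate charts in \eqref{eqn_2.12} to a statement that can be verified one stratum at a time, and then to exploit the relations \eqref{strongcondition} to propagate the smoothness from the bottom strata upward. First I would recall that each chart $\psi_{x_{\alpha_k}} = (\phi^{\alpha_k})^{-1}$ has as its domain an open subset of the smooth manifold $\Gl^{\alpha_k}(\epsilon^{\alpha_k})|_{U_{\alpha_k}}$, which is stratified by $\cS^{\alpha_k}$ via the fibrewise linear stratification. A transition map $\psi_{x_{\alpha_1}} \circ \psi_{y_{\alpha_2}}^{-1}$ is then a homeomorphism between open subsets of these stratified manifolds that is, stratum by stratum, a diffeomorphism (because each $\phi^{\alpha_k}_\beta$ is a diffeomorphism onto its image). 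So the content of the lemma is: if the transition map restricted to each stratum indexed by $\beta \in \cS^{\alpha_1} \cap \cS^{\alpha_2}$ is smooth \emph{together with the induced bundle data}, then the whole transition map is smooth across strata.

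The key step is to analyze what the induced gluing datum from item (ii) (\emph{induction for gluing maps}) encodes. Given the datum $(U_{\alpha_1},\mfky_1,\epsilon^{\alpha_1},\phi^{\alpha_1},\Phi^{\alpha_1})$ and a $\beta \succ \alpha_1$, the induced datum over an open $U^\beta \subset R(\alpha_1,\beta)$ is built from $\Phi^{\alpha_1}_\beta \colon N(\Gl^{\alpha_1}_\beta) \to \Gl^\beta$ and the component-wise embedding $\tau$; the crucial identities are precisely \eqref{strongcondition}, namely $\phi^{\alpha_1} = \phi^\beta \circ \Phi^{\alpha_1}_\beta$ and $\Phi^{\alpha_1}_\gamma = \Phi^\beta_\gamma \circ \Phi^{\alpha_1}_\beta$. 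These say that near the stratum $\Gl^{\alpha_1}_\beta$, the chart $\phi^{\alpha_1}$ factors through the bundle isomorphism $\Phi^{\alpha_1}_\beta$ onto the normal bundle $N(\Gl^{\alpha_1}_\beta)$ composed with the $\beta$-chart $\phi^\beta$. Since $\Phi^{\alpha_1}_\beta$ is by hypothesis a smooth bundle isomorphism, a neighborhood of each point of $\Gl^{\alpha_1}_\beta$ in $\Gl^{\alpha_1}$ is \emph{smoothly identified} with a neighborhood in $N(\Gl^{\alpha_1}_\beta)$, i.e. with (an open subset of) a vector bundle over the smooth manifold $\Gl^\beta_{\alpha_1}(\epsilon)|_{D(\alpha_1,\beta)}$; and under this identification the chart $\phi^{\alpha_1}$ becomes the composite of a smooth bundle projection-type map with $\phi^\beta$. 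Thus the transition $\psi_{x_{\alpha_1}} \circ \psi_{y_{\alpha_2}}^{-1}$, near a point of $M_\beta$ with $\beta \in \cS^{\alpha_1} \cap \cS^{\alpha_2}$, is conjugate via these two smooth bundle identifications to the transition between the \emph{induced} $\beta$-charts, extended off the zero section by the linear (hence smooth) structure of the normal bundles $N(\Gl^{\alpha_k}_\beta)$. By hypothesis the two induced $\beta$-charts agree (or are $C^\infty$-compatible) over the common domain, so this conjugated map is smooth on a neighborhood of the zero section, not merely along it.

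From there I would run an induction on the poset $\cS^{\alpha_1} \cap \cS^{\alpha_2}$ ordered by $\prec$, starting from maximal elements (top-dimensional common strata, where the statement is immediate since the charts restrict to diffeomorphisms of open subsets of $\R^m$ there and open density gives smoothness on an open set) and descending. At each $\beta$, the preceding paragraph shows the transition map is smooth on an open neighborhood of (the preimage of) $M_\beta$ in the chart domain; taking the union over all $\beta \in \cS^{\alpha_1} \cap \cS^{\alpha_2}$ of these open sets, together with the open set where smoothness is already known from higher strata, covers $\psi_{y_{\alpha_2}}(U_{x_{\alpha_1}} \cap U_{y_{\alpha_2}})$, because every point of the overlap lies in some stratum $M_\beta$ with $\beta$ in the common index set. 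Smoothness being a local property, the transition map is smooth everywhere on the overlap; applying the same argument to the inverse gives that it is a diffeomorphism, hence the two charts are $C^\infty$-compatible.

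The main obstacle I anticipate is the passage from "smooth along the stratum $\Gl^{\alpha_k}_\beta$" to "smooth on an open neighborhood of it": one must check that the two smooth normal-bundle identifications coming from $\Phi^{\alpha_1}_\beta$ and $\Phi^{\alpha_2}_\beta$ are compatible not just fibrewise at the zero section but in a neighborhood, and that the component-wise embeddings $\tau_{\beta\gamma}$ and $\tau^\nu_{\beta\gamma}$ of \eqref{db:include} intertwine correctly so that the further strata $\gamma \succ \beta$ inside $N(\Gl^{\alpha_k}_\beta)$ match up. This is exactly where the identities \eqref{strongcondition} — and the inductive hypothesis applied to all $\gamma \succ \beta$ — are needed: they guarantee that the normal-bundle coordinates introduced at stage $\beta$ are compatible with those already fixed at stages $\gamma \succ \beta$, so the local smooth structure built up stratum by stratum is coherent. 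Making this coherence precise, rather than the individual stratum-wise diffeomorphism claims, is the technical heart of the argument.
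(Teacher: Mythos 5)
Your argument is correct and is essentially the paper's own (the paper only remarks that the lemma ``follows directly from the $C^\infty$-compatibility under the induction for gluing maps''): near each stratum indexed by $\beta\in\mc S^{\alpha_1}\cap\mc S^{\alpha_2}$ the identities \eqref{strongcondition} factor each chart through its induced $\beta$-chart, so when the induced data agree the transition function reduces to $(\Phi^{\alpha_1}_\beta)^{-1}\circ\Phi^{\alpha_2}_\beta$, which is smooth, and these neighbourhoods cover the overlap. The ``smooth along versus smooth near the stratum'' worry you raise at the end is resolved by noting that the component-wise embedding realizes $N(\Gl^{\alpha_k}_\beta)$ as an \emph{open} neighbourhood of $\Gl^{\alpha_k}_\beta$ in $\Gl^{\alpha_k}$, so the factorization holds on an open set and no separate coherence induction is needed.
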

 
\begin{definition}[Gluing-compatibility]\label{def_2.7}
Two gluing data
$$
(U_{\alpha_k},\mfky^{\a_k},\epsilon^{\alpha_k},\phi^{\alpha_k}, \Phi^{\alpha_k})
$$
over $U_{\alpha_k}$ for $k=1,2$ are said to be gluing-compatible if
for any $\beta\in \mc S^{\alpha_1}\cap \mc S^{\alpha_2}$,
any their induced gluing data on the common domain in  $M_\b$ by  gluing maps 
agree.   
\end{definition}

We  finally come to the conclusion that a gluing atlas $\cF$  (Cf. Definition \ref{gluing_structure}) for $M$ defines a canonical smooth structure on $M$ if
any pair in $\cF$ is gluing-compatible.

\begin{theorem}\label{smooth_structure}
Suppose that $M$ admits a good gluing structure
$\mc F$.
Then there exists a particular gluing atlas 
$$
\mc G:=\{A_\alpha=(M_\alpha, \mfky^\alpha,\epsilon^\alpha,\phi^\alpha,\{\Phi^\alpha_\beta\}_{\beta\in \mc S^\alpha})\}_{\alpha\in \mc S}\subset \mc F
$$
such that any pair in $\mc G$  are gluing-compatible.   
 \end{theorem}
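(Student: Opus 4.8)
The plan is to construct the atlas $\mc G$ by a downward induction on the partial order, starting from the top stratum and building compatible gluing data over each $M_\alpha$ in turn, using the three properties of a good gluing structure at each step. First I would fix a linear extension of $(\mc S,\prec)$, say $\alpha_0 \succ \alpha_1 \succ \cdots \succ \alpha_N$, so that when we process $\alpha=\alpha_k$ all strata $\beta \succ \alpha$ have already been handled. For the maximum element $\alpha_0$, the gluing bundle is the trivial zero-dimensional bundle over $M_{\alpha_0}$, so we just take $A_{\alpha_0}$ to be (the restriction-induced or any) gluing datum over all of $M_{\alpha_0}$ with the identity gluing map; the existence of such a datum over the closed-and-open stratum $M_{\alpha_0}$ follows from the gluing atlas $\mc F$ together with the sewing property (i.e.\ cover $M_{\alpha_0}$ by finitely many images of gluing maps from $\mc F$, restrict, and sew; closure of $\mc F$ under sewing keeps the result in $\mc F$).

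The inductive step is the heart of the argument. Assume that for every $\beta \succ \alpha$ we have already produced $A_\beta=(M_\beta,\mfky^\beta,\epsilon^\beta,\phi^\beta,\Phi^\beta)\in\mc F$, and that these are mutually gluing-compatible in the sense of Definition \ref{def_2.7}. I want to build $A_\alpha$ over $M_\alpha$. Choose a finite cover of $M_\alpha$ by proper open subsets $U_1,\dots,U_r$, each carrying a gluing datum $A_\alpha^{(i)}\in\mc F$ obtained by restriction from a member of the gluing atlas. The obstruction is that these local data need not agree on overlaps. To fix this, I would instead produce each $A_\alpha^{(i)}$ \emph{not} arbitrarily but by the induction-for-gluing-maps construction (iii) from the already-fixed data $A_\beta$ over the various $\beta\succ\alpha$: concretely, near a point of $M_\alpha$ lying in the closure of $M_\beta$, the datum $A_\beta$ via $\Phi^\beta_{(\cdot)}$, $(\Phi^\beta_\alpha)^{-1}$ and the component-wise embedding $\tau$ induces a gluing datum over an open subset of $M_\alpha$ (this is exactly formula \eqref{induced_maps}); since $\mc F$ is closed under the induction for gluing maps, this induced datum lies in $\mc F$. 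The inductive hypothesis that the $A_\beta$ are mutually gluing-compatible, together with the cocycle-type identities \eqref{strongcondition} ($\phi^\a=\phi^\b\circ\Phi^\a_\b$, $\Phi^\a_\gamma=\Phi^\b_\gamma\circ\Phi^\a_\b$), guarantees that the data induced on $M_\alpha$ from different $\beta$'s \emph{coincide} on their common domains in the sense required for sewing. Hence by repeated application of the sewing property (ii), and the fact that $\mc F$ is closed under sewing, these glue to a single gluing datum over the open subset $U_\alpha := \bigcup_i U_i \subset M_\alpha$, which is of boundary-type since its complement $M_\alpha\setminus U_\alpha$ (a set disjoint from all the closures of higher strata meeting $M_\alpha$) is closed in $\bar M_\alpha$.

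Finally, I would invoke the inward-extendibility condition (iii) of Definition \ref{good:gluing_atlas}: the boundary-type gluing datum just constructed over $U_\alpha$ admits an inward-extension $A_\alpha=(M_\alpha,\tilde\mfky,\tilde\epsilon,\tilde\phi^\alpha,\tilde\Phi^\alpha)\in\mc F$ that agrees with it over some smaller boundary-type open $U_\alpha'\subset U_\alpha$. Over $U_\alpha'$, and hence over a neighbourhood of $\partial M_\alpha=\bar M_\alpha\setminus M_\alpha$ inside $M_\alpha$, $A_\alpha$ equals the data induced from the $A_\beta$ with $\beta\succ\alpha$; this is precisely what is needed for $A_\alpha$ to be gluing-compatible with every previously constructed $A_\beta$, because gluing-compatibility is a condition that only concerns the induced data on strata $M_\gamma$ with $\gamma\succ\alpha$, i.e.\ near $\partial M_\alpha$. (One checks, using \eqref{strongcondition} again, that if $A_\alpha$ agrees near $\partial M_\alpha$ with the datum induced from $A_\beta$, then for any $\gamma\succeq\beta$ the further-induced datum from $A_\alpha$ agrees with that from $A_\beta$.) Carrying this through all $N+1$ steps yields the required $\mc G=\{A_\alpha\}_{\alpha\in\mc S}\subset\mc F$ with all pairs gluing-compatible.

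The main obstacle I anticipate is the bookkeeping in the inductive step: one must verify carefully that the compatibility among the $\{A_\beta\}_{\beta\succ\alpha}$ really does force the induced data over $U_\alpha$ to \emph{coincide} (not merely agree up to the ambiguity allowed by different $\epsilon$'s and shrinkings) so that the sewing property applies verbatim, and that after the inward-extension the new datum $A_\alpha$ remains gluing-compatible with \emph{all} higher $A_\beta$ simultaneously and not just with those whose closure meets a single chart. Both points rest on the cocycle identities \eqref{strongcondition} and on Lemma \ref{lem_2.6}, which reduces $C^\infty$-compatibility to agreement of gluing-map-induced data; I would lean on these throughout rather than re-examining transition functions directly.
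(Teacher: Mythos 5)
Your induction runs in the wrong direction, and this is not a cosmetic issue: the key step of your argument invokes a construction that does not exist in this framework. You process the maximal element first and then claim that, for $\alpha\prec\beta$, the already-built datum $A_\beta$ over $M_\beta$ ``via $\Phi^\beta_{(\cdot)}$, $(\Phi^\beta_\alpha)^{-1}$ and $\tau$ induces a gluing datum over an open subset of $M_\alpha$.'' But the gluing bundle $\Gl^\beta$ is stratified by $\cS^\beta=\{\gamma\,|\,\gamma\succeq\beta\}$, the gluing map $\phi^\beta$ lands in $M^\beta=\bigcup_{\gamma\succeq\beta}M_\gamma$ (which is disjoint from $M_\alpha$ when $\alpha\prec\beta$), and the bundle maps $\Phi^\beta_\gamma$ are only defined for $\gamma\succeq\beta$ — there is no $\Phi^\beta_\alpha$ to invert. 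The induction for gluing maps (formula \eqref{induced_maps}) pushes data \emph{upward}: from a datum over $U\subset M_\alpha$ one induces data over subsets of $M_\beta$ for $\beta\succ\alpha$, never the reverse. Consequently, when you reach a lower stratum there is no mechanism to manufacture an $A_\alpha$ whose upward-induced data match the already-frozen $A_\beta$'s; no axiom of a good gluing structure provides such a ``downward lift.'' Your base case is also off: the top stratum is open and dense, not ``closed-and-open,'' and is generally non-compact, so covering it by finitely many charts and sewing is not available there.

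The paper's proof runs bottom-up. It filters $\cS=\cS_1\cup\cdots\cup\cS_K$ by repeatedly taking \emph{smallest} elements; the strata in $\cS_1$ are compact (indeed closed in $M$), so one may choose arbitrary gluing data over all of each such $M_\alpha$ with disjoint images. At stage $k+1$, for $\alpha\in\cS_{k+1}$ the data already fixed on the strata $\gamma\prec\alpha$ induce, via the induction for gluing maps, a datum over the boundary-type open set $U_\alpha=\bigcup_{\gamma\prec\alpha}R(\phi^\gamma_\alpha)$, and inward-extendibility is then used exactly once per stratum to extend this to all of $M_\alpha$ while agreeing with the induced data near $\partial M_\alpha$; the cocycle identities \eqref{strongcondition} give compatibility with all lower $A_\gamma$. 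You do correctly identify sewing, inward-extension, and \eqref{strongcondition} as the relevant tools, but they must be deployed in this order. Finally, you omit the treatment of \emph{incomparable} pairs: the paper must shrink the $\epsilon$'s at each stage so that $R(\phi^\alpha)\cap R(\phi^\beta)$ is contained in $\bigcup_{\gamma\in\cS_\alpha\cap\cS_\beta}R(\phi^\gamma)$ (condition \eqref{eqn_seperable}), which is what reduces compatibility of incomparable charts to their common compatibility with lower strata; without this separation step the claimed pairwise gluing-compatibility of $\mc G$ is not established.
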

 
\begin{proof} 
We first introduce subsets $\mc S_1,\mc S_2,\ldots $ of
$\mc S$  inductively by letting $\mc S_n$ consist of all smallest elements in 
$$
\mc S\setminus \bigcup_{k<n} \mc S_k.
$$
Here we assume that $\mc S_0=\emptyset$.
Write \begin{equation}
\mc S=\mc S_1\cup\cdots\cup \mc S_K.
\end{equation}

We will construct a collection gluing datum 
$$
 \{ A^{(k)}_\alpha=(M_\a, \mfky^\a,
 \epsilon^\a_{(k)}, \phi^\a,\Phi^\a) |  \a\in \bigcup_{i=1}^k \mc S_i\}
 $$ 
by applying an  induction argument  to $k$
satisfying  the following properties:
\begin{enumerate}
\item[(I)] any pair $A^{(k)}_\a$ and $A^{(k)}_\b$ are
gluing-compatible;
\item[(II)] 
 for any incomparable 
indices  $\alpha$ and $\beta$
\begin{equation}\label{eqn_seperable}
R^{(k)}(\phi^\alpha)\cap R^{(k)}(\phi^\beta)\subset \bigcup_{\gamma\in \mc S_\alpha\cap\mc S_\beta} R(\phi^\gamma).
\end{equation}
Here $R^{(k)}(\phi^\alpha)$ is the image of  the gluing map $\phi^\alpha$ in $A^{(k)}_\a$ and  $\mc S_\alpha$
is defined to be $ \{\gamma|\gamma\preceq \alpha\}$. 
 Though the domains
 of $\phi^\a$
 in $A^{(k)}_\a$ depend on $k$ (due to the changes of $\epsilon^\a_{(k)}$), the maps are same on
 common domains. Hence we simply denote it by $\phi^\a$
 without indicating  $k$.
\end{enumerate}
 \vskip 0.1in
\noindent {\em \underline{Step 1}  ($k=1$).}  Note that any $\alpha\in \mc S_1$, $M_\alpha$ is compact and 
any two  of $\{M_\a | \a \in \cS_1\}$  are disjoint. Choose an arbitrary gluing datum 
$$
{A}^{(1)}_\alpha:=(M_\alpha,\mfky^\a,
 \epsilon^\alpha_{(1)},
  \phi^\alpha, \Phi^\alpha)
$$  
with
 small enough $ \epsilon^\alpha_{(1)}$ such that 
 $ 
R(\phi^\a)\cap R(\phi^\b)=\emptyset.
$ 
Hence, $\{{A}^{(1)}_\alpha | \a \in \cS_1 \}$  trivially  satisfies the properties (I) and (II). 

\vskip 0.1in
\noindent {\em \underline{Step 2}  ($k=2$).} For any 
$\alpha\in \mc S_2$,  by the induction for the gluing maps, the gluing data 
$\{ A^{(1)}_\gamma | \c \in \cS_1\}$
 induce a gluing datum over 
$$
U_\alpha=\bigcup_{\gamma\prec\alpha} R(\phi^\gamma_\alpha) \subset M_\a.
$$
 We denote it by 
$$
 B_\alpha= (U_\alpha,\mfky, \epsilon, \tilde\phi^\alpha,\tilde\Phi^\alpha)
$$
for some small $\epsilon$.
Clearly, $U_\alpha$ is a boundary-type open subset of $M_\alpha$. Then by the assumption that $\mc F$ is a  good
gluing structure,  $B_\alpha$ has an inward-extension, denoted by 
$$
A^{(2)}_\a=(M_\a, \mfky^\a, \epsilon^\a_{(2)}, \phi^\a,\Phi^\a)
$$
such that it coincides with $B_\alpha$ over some
boundary-type open subset $V_\a$ of $U_\alpha$.

 Next we need do some modifications on existing gluing data $\{A_\a^{(2)} | \a\in  \cS_1 \cup \cS_2\}$ as follows. 
\begin{enumerate}
\item For any $\gamma\in \mc S_1$ we replace $\epsilon^\gamma_{(1)}$ by smaller $\epsilon^\gamma_{(2)}$ such that 
the image of the gluing mp $\phi^\gamma_\alpha$ is a open subset of  $V_\a$ for $\a \in \cS_1 \cup  \cS_2$; 
\item we may  downsize $\epsilon^\a_{(2)}$, for example, 
$ {\epsilon}^\alpha_{(2)}
\leq \frac{1}{2}
\min_{\gamma\in\mc S_1}\epsilon^\gamma_{(1)},  $ 
 such that \eqref{eqn_seperable} holds for any incomparable pair in $\cS_2$. 
 \end{enumerate}
We now verify that any two gluing data  ${A}^{(2)}_\alpha$ and ${ A}^{(2)}_\beta$ are gluing-compatible:
\begin{enumerate}
\item if $\alpha$ and $\beta$ are comparable, say $\alpha\prec\beta$, then the induced gluing datum
 from ${A}^{(2)}_\alpha$  
 is over a subset  of $V_\b$ and it coincides with
${ A}^{(2)}_\beta$ by the construction;
\item if $\alpha$ and $\beta$ are incomparable and $ \mc S_\alpha\cap \mc S_\beta=\emptyset$, then two gluing data are trivially compatible due to \eqref{eqn_seperable};
\item if $\alpha$ and $\beta$ are incomparable and $\mc S_\alpha\cap \mc S_\beta\not=\emptyset$, then the image of $\phi^\alpha$ and $\phi^\beta$ are covered
by images of  $\phi^\gamma$ of   $\gamma\in\mc S_{\alpha,\beta}$, but 
both $ A^{(2)}_\alpha$ and $A^{(2)}_\beta$
 are gluing-compatible with $A^{(2)}_\gamma$, this implies that they are gluing-compatible with each other.  
\end{enumerate}

\vskip 0.1in
\noindent {\em \underline{Step 3}  (general case).}  Now suppose that $A^{(k)}_\alpha$ for all $\alpha$ in $\mc S_1,\ldots,
\mc S_k$ are constructed. We proceed to  construct $ A^{(n+1)}_\alpha$ for all $\alpha\in \mc S_{k+1}$ by repeating the  same   construction as in Step 2. That is, for any $\a\in \cS_{k+1}$, the gluing data  $\{ A^{(1)}_\gamma | \c \prec 
\a\}$ defines a boundary-type  gluing over a subset in $M_\a$. The  inward-extension
condition supplies us with a gluing datum 
\[
A^{(k+1)}_\a=(M_\a, \mfky^\a, \epsilon^\a_{(2)}, \phi^\a,\Phi^\a).
\]
 Then as in Step 2,  we modify
$\epsilon_{(k)}^\a$ to a suitable $\epsilon^\a_{(k+1)}$
for  $\alpha$  in $\mc S_1,\ldots,\mc S_k$ and get an updated
$\{A^{(k+1)}_\a\}$  satisfying both (I) and (II). 

Since $\mc S$ is finite,  such a procedure will terminate in finite steps. Hence, we get a required 
gluing atlas as in the Theorem. \end{proof}

\begin{remark}\label{cor_metric}
Let $\mc G$ be the collection of gluing data given in Theorem \ref{smooth_structure}.
Then all the bundle isomorphisms $\Phi^\a_\b$ in $\mc G$ are isometric.
\end{remark}

\def \sfG{\mathsf G}
\def \sfE{\mathsf E}

\subsection{Gluing theorem for  orbifold  stratified  spaces}\label{2.4} \ 

In this subsection, we generalise the results in previous subsections to orbifold stratified spaces. We  employ the language of proper \'etale groupoids to describe  topological and smooth orbifold  following  the definition  of  proper \'etale groupoids as  in  \cite[Definition 2.6]{CLW}.
 For readers' convenience, we  recall the definitions of  Lie groupoid, proper \'etale groupoids  and   vector bundles over   Lie  groupoids from \cite{CLW}.  Topological groupoids and vector bundles over topological groupoids can be defined in a similar way.

\begin{definition}  \label{Lie-gpoid:def} ({\em Lie groupoids and proper \'etale groupoids})
 A Lie groupoid
 $\sfG = (G^0,  G^1)$  consists of two smooth manifolds $G^0$ and $G^1$, together with five smooth maps $(s, t, m, u, i)$ satisfying the following properties.
  \begin{enumerate}
\item  The source map  and the target map $s, t: G^1 \to  G^0$ are submersions.
\item The composition map
\[
m:  G^{[2]}: =\{(g_1, g_2) \in  G^1 \times  G^1: t(g_1) = s(g_2)\} \longrightarrow G^1
\]
written as $m(g_1, g_2) = g_1\circ  g_2$ for composable elements $g_1$ and $g_2$,
satisfies the obvious associative property.
\item The unit map $u: G^0 \to G^1$ is a two-sided unit for the composition.
\item The inverse map $i: G^1 \to G^1$, $i(g) = g^{-1}$,  is a two-sided inverse for the composition.
\end{enumerate}
In this paper,   a groupoid $\sfG$   will be denoted by  $\sfG = (G^1 \rightrightarrows G^0)$ where $G^0$ will be called the space of objects or units, and $G^1$ will be called the space of arrows.
A Lie groupoid $\sfG$   is {\em  proper } if $(s, t): G^1 \to G^0 \times G^0$ is proper, and  is called {\em   \'etale } if $s$ and $t$ are local diffeomorphisms.   Given a proper \'etale groupoid $ (G^1 \rightrightarrows G^0)$, for any $x\in G^0$, 
\[
G_x = (s, t)^{-1}(x, x) = s^{-1}(x)\cap t^{-1}(x)
\]
 is a finite group, called the {\em isotropy group}  at $x$.

\end{definition}

\begin{remark}\label{opensubset}  Let $\sfG =  (G^1 \rightrightarrows G^0)$ be a proper \'etale Lie groupoid. 
\begin{enumerate}
\item We remark that 
$G^1$ defines an equivalence relation on $G^0$: that is, any two points in $G^0$
are equivalent if they are the source and target of an arrow in $G^1$. The quotient space
$G^0/\sim$ is denoted by $|\sfG|$ and is called the coarse space of $\mathsf G$. Let 
$\pi: G^0\to |\sfG|$ be the projection map. There is a canonical orbifold structure on $|\sfG|$
defined by $\mathsf G$. 

\item In this paper, when we say that $\mathsf U=(U^1\rightrightarrows U^0)$  is an open {\em full-subgroupoid }
of $\mathsf G$ if it is of the form 
$$
U^0=\pi^{-1}(V), \;\;\; U^1= s^{-1}(U^0),
$$
for an open subset of $V$ of  $|\mathsf G|$. 
For example, given any open subset $U\subset G^0$ we can associate it an open full-subgroupoid $\mathsf U$
by setting 
$$
U^0=\pi^{-1}(V), \;\;\; U^1= s^{-1}(U^0),
$$
where $V=\pi(U)$.
\item Let $x\in G^0$, there is a $G_x$-invariant open neighbourhood of $x$ in $G^0$ such that the full-subgroupoid associated to $U_x$  is Morita equivalent to the action groupoid
\[
U_x \rtimes G_x \rightrightarrows U_x.
\]
This latter  action groupoid is called a local model of $\sfG$ at $x\in G^0$. 
Recall that for  proper \'etale Lie groupoids, a Morita equivalence means that their  coarse spaces are homeomorphic and  their local models are isomorphic. 
\end{enumerate}
\end{remark}

\begin{proposition} \label{bundle:gpoid}
Given a Lie groupoid  $\sfG=(G^1 \rightrightarrows G^0)$,  a Lie groupoid
$\sfE=( E^1  \rightrightarrows E^0)$ is a vector bundle over $\sfG$ if and only if there is
 a strict Lie groupoid morphism $\pi:  ( E^1  \rightrightarrows E^0) \to (G^1 \rightrightarrows G^0)$ given by
 the commutative diagram
\ba\label{pull-back:gpoid}
\xymatrix{
E^1 \ar@<.5ex>[d]\ar@<-.5ex>[d] \ar[r]^{\pi_1} & G^1\ar@<.5ex>[d]\ar@<-.5ex>[d]\\
E^0 \ar[r]_{\pi_0}& G^0 }
\na
in the category of Lie groupoids with strict morphisms, such that
\begin{enumerate}
\item  the diagram (\ref{pull-back:gpoid}) is a pull-back groupoid diagram,
\item both $\pi_1: E^1\to G^1$ and $ \pi_0: E^0\to G^0$
are vector bundles.
\item  the pull-back arrows
\[
\{(v_x, \gamma, v_y) | \gamma \in G^1, (v_x, v_y) \in E_{s(\gamma)} \times E_{t(\gamma)} \}
\]
define  a linear isomorphism  $\xi (\gamma):  E_{s(\gamma)} \to E_{t(\gamma)} $ sending $v_x$ to $x_y$.
\end{enumerate}
\end{proposition}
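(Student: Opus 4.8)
The statement is an unwinding of the definition of a vector bundle over a Lie groupoid recalled from \cite{CLW}, and the plan is to verify that that definition is equivalent, clause by clause, to the package consisting of the strict morphism $\pi=(\pi_1,\pi_0)$ of \eqref{pull-back:gpoid} together with conditions (1)--(3). Recall that such a bundle amounts to a vector bundle $\pi_0\colon E^0\to G^0$ equipped with a smooth linear $\sfG$-action, that is, for each arrow $\gamma\colon x\to y$ a linear isomorphism $\xi(\gamma)\colon E_x\to E_y$ depending smoothly on $\gamma$, with $\xi(u(x))=\mathrm{id}_{E_x}$ and $\xi(\gamma_1\circ\gamma_2)=\xi(\gamma_2)\circ\xi(\gamma_1)$ in the composition convention of Definition \ref{Lie-gpoid:def}; the arrow manifold is then $E^1=\{(v_x,\gamma,v_y)\mid \gamma\colon x\to y,\ v_y=\xi(\gamma)v_x\}$ with its evident groupoid structure.

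For the forward implication I would start from such a bundle and read off the required data. Here $\pi_0$ is a vector bundle by hypothesis, and the source projection $(v_x,\gamma,v_y)\mapsto(\gamma,v_x)$ identifies $E^1$ with $s_G^{*}E^0$, a vector bundle over $G^1$; this gives (2), and since the same identification (together with its target analogue) is precisely the assertion that the square \eqref{pull-back:gpoid} is Cartesian on objects and arrows, it gives (1). The strict morphism $\pi=(\pi_1,\pi_0)$ is the defining one and the square commutes because $s_E,t_E$ cover $s_G,t_G$. Finally, (3) is just the fibrewise linearity and invertibility of the action maps $\xi(\gamma)$, which is part of the definition.

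For the converse I would, given $(\pi_1,\pi_0)$ satisfying (1)--(3), define $\xi(\gamma)\colon E_{s(\gamma)}\to E_{t(\gamma)}$ by condition (3), noting it is linear and invertible, and check that $\gamma\mapsto\xi(\gamma)$ is smooth: $\xi$ is built from the smooth structure maps $s_E,t_E$ of the Lie groupoid $\sfE$ restricted to the fibres of $\pi_1$, which are smooth vector bundles by (2), using the pull-back identification (1). The groupoid axioms for $\sfE$ then yield the cocycle identities: because $\pi$ is strict, $m_E$ covers $m_G$, and because (1) identifies $E^1$ via the source map, $m_E$ is forced to be the linear map determined by $\xi$; its associativity gives $\xi(\gamma_1\circ\gamma_2)=\xi(\gamma_2)\circ\xi(\gamma_1)$, the unit $u_E$ gives $\xi(u(x))=\mathrm{id}$, and the inverse $i_E$ gives $\xi(\gamma^{-1})=\xi(\gamma)^{-1}$. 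Hence $(E^0\to G^0,\xi)$ is a vector bundle over $\sfG$ in the sense of \cite{CLW} and $\sfE$ is its arrow groupoid.

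I expect the main difficulty to be bookkeeping rather than a genuine obstruction: one must match the precise formulation of ``vector bundle over a Lie groupoid'' in \cite{CLW} with the pull-back-square language of \eqref{pull-back:gpoid}, being careful about which of $s_E,t_E$ exhibits $E^1$ as a fibred product and about the orientation of the composition convention. The one point that genuinely needs care is that condition (1) be strong enough to determine $m_E$, and hence the whole groupoid structure of $\sfE$, from the action $\xi$ alone; this rigidity is what turns the argument into an equivalence rather than a mere implication, and it is also the input that makes the smoothness of $\gamma\mapsto\xi(\gamma)$ automatic.
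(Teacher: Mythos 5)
The paper states Proposition \ref{bundle:gpoid} without any proof, treating it as a restatement of the definition of a vector bundle over a Lie groupoid from \cite{CLW}, and your clause-by-clause unwinding is the correct and standard way to supply the missing argument. Your reading of condition (1) --- that $E^1$ sits inside the full pull-back $E^0\times_{G^0}G^1\times_{G^0}E^0$ as the graph of the action, identified with $s_G^{*}E^0$ via the source projection --- is the intended one, and the remaining points (smoothness of $\gamma\mapsto\xi(\gamma)$ from the smooth structure maps of $\sfE$, the cocycle, unit and inverse identities from the groupoid axioms, and the rigidity that lets $\xi$ determine $m_E$) go through exactly as you describe.
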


We say that  $( E^1  \rightrightarrows E^0)$ is linearly stratified
if $E^1$ and $E^0$ are linearly stratified such that the commutative diagram  (\ref{pull-back:gpoid})
preserves the linear stratifications, and moreover,  the pullback  arrows in Proposition
\ref{bundle:gpoid} is a stratum preserving isomorphism, or simply, the arrows in
$E^1$ preserve the linear stratification on $E^0$.

 Now we consider an  orbifold stratified space  as Definition   \ref{def:1.1} 
 $$M=\bigsqcup_{\a\in \cS} |\sfM_\a|,$$
 where $ 
 \sfM_\alpha=(M_\a^1\rightrightarrows M_\a^0) $ is a proper \'etale Lie groupoid.
Denote
 $$
 \sfM=(M^1\rightrightarrows M^0)=\left(\bigsqcup_{\a\in \cS}    M^1_\a\rightrightarrows
  \bigsqcup_{\a\in \cS} M^0_\a \right). 
 $$
We {\bf assume} that $\sfM$ is a proper \'etale topological groupoid and  both
$M^0 $ and $M^1$ are manifold  stratified spaces with respect to $\cS$. We can adapt all the arguments for manifold stratified spaces to orbifold  stratified spaces.

 \vspace{3mm}

 \nin {\bf Condition A'}  ({\bf Existence of orbifold gluing bundles})   For any $\a\in \cS$, there is a linearly stratified  smooth  orbifold vector bundle $$\sfGl^\a
 =(\Gl^{1,\a}\rightrightarrows\Gl^{0,\a})\to\sfM_\a
 =( M^1_\a\rightrightarrows M^0_\a)$$ with respect  to $\cS^\a=\{\beta|\alpha\preceq \beta\}$, such that for
  $i=1,2$,  $\Gl^{i,\alpha} \to M^i_\alpha $ is a gluing bundle  for the manifold stratified space  $ M^i_\alpha$.   
 This bundle is called the orbifold gluing bundle  over the strata $\sfM_\a$.

\vspace{3mm}

We can equip   the gluing bundle $\sfGl^\a$  with  a compatible smooth   metric $\mathfrak y^\a$ so that
for any $\epsilon>0$,  the open $\epsilon$-ball  bundle of $\sfGl^\alpha$
 with the induced stratification.  Set  $\mc S_\alpha=\{\beta|\beta\prec\alpha\}$. Let 
 $$
\sfM^\alpha=\bigcup_{\beta\in \mc S^\alpha}\sfM_\beta
$$ 
where $\mc S^\alpha=\{\beta|\alpha\preceq \beta\}$.

\begin{definition}\label{orb_gluing_datum}
Let $\mathsf U=(U^1\rightrightarrows U^0)$ be any open full-subgroupoid of  $\sfM_\alpha$. 
A  gluing datum over $\mathsf U$  consists of  a metric $\mfky=(\mfky^0,\mfky^1)$ on $\sfGl^\a|_{\mathsf U}$ and a gluing map 
$
\phi^\alpha: \sfGl^\alpha(\epsilon)|_{\mathsf U}\to \sfM^\alpha
$ given  by a strict morphism of topological groups
$$
(\phi^{1,\alpha},\phi^{0,\alpha}): (\Gl^{1,\alpha}(\epsilon)|_{U^1}\rightrightarrows
 \Gl^{0,\alpha}(\epsilon)|_{U^0})\to ( M^{1,\alpha}\rightrightarrows M^{0,\alpha})
$$ 
for some constant $\epsilon>0$ such that $\phi^{i,\alpha}$, for $ i=0,1$, are gluing maps for 
$M^i$, namely,
\begin{enumerate}
\item the image of $\phi^{i,\alpha}$ is open and the map $\phi^{i,\alpha}$ is
a homemorphism   onto its image  in the sense of topological groupoids;
\item the map $\phi^\alpha$ is a stratified smooth map with respect to the stratification, i.e, for any 
$\beta\in \mc S^\alpha$, $\phi^\alpha$ maps $\sfGl^\alpha_\beta(\epsilon)|_{\mathsf U}$ to $\sfM_\beta$, we denote this map by 
 $$
 \phi^\alpha_\beta: \sfGl^\alpha_\beta(\epsilon)|_{\mathsf U}\to \sfM_\beta;
 $$
 then $\phi^\alpha_\beta$ is an isomorphism  onto its image    in the sense of Lie groupoids (cf. Remark \ref{gp_diff});
\end{enumerate}
and a collection of  stratum-preserving smooth bundle isomorphism maps which preserve the induced  stratifications
$$
\Phi^\alpha_\beta: N({\sfGl^\alpha_\beta})\to \sfGl^\beta
$$
that covers $\phi^\alpha_\beta$ for any $\beta\in \mc S^\alpha$  in the sense that the diagram 
 \[
 \xymatrix{
 N(\sfGl^\a_\b)\ar[d]\ar[r]^{\Phi^\a_\b} & \sfGl^\b\ar[d]
 \\
  \sfGl^\alpha_\beta(\epsilon)|_{\mathsf U}\ar[r]^{\phi^\a_\b} & \sfM_\beta
  }
 \]
 commutes in the category of Lie groupoids and strict morphisms.
We  denote this  gluing datum over $\mathsf U$ by 
\[
(\mathsf U,\rho,\epsilon, \phi^\alpha, \{\Phi^\alpha_\beta\}_{\beta\in \mc S^\alpha}).
\]
 \end{definition}
\begin{remark}\label{gp_diff}
Let $R^i(\alpha,\beta)$ be the image of $\phi^{i,\a}_\b, i=0,1$
and $|R(\alpha,\beta)|$ be the image of $|\phi^\a_\b|$ (the coarse map of $\phi^a_\b$).
Then
\[
\mathsf R(\a,\beta) = (R^1(\a,\b)\rightrightarrows R^0(\a,\b))
\]
is the 
  full subgroupoid associated to an open subset $|R(\a,\b)|$ of $M_\b$.  Then  $\phi^\alpha_\beta$, being an isomorphism  onto its image    in the sense of Lie groupoids,   is a strict Lie  groupoid isomorphism 
\begin{equation}\label{glue-up-morita}
 \sfGl^\alpha_\beta(\epsilon)|_{\mathsf U}\ \cong 
\mathsf R(\a,\b).
\end{equation}
This is equivalent to say that the maps $\phi^{\a, 0}_\b$ and $\phi^{\a, 1}_\b$ are diffeomorphisms onto its images
in $M_\b^0$ and $M_\b^1$ respectively. 
\end{remark}

Parallel to the manifold stratified case, we assume Condition B' and  Condition C'. 
\vskip 0.1in
\noindent
{\bf Condition B': (existence of gluing data)} For any
$\a\in \mc S$ and any proper open subset $\mathsf U\subset \sfM_\a$
there exists a gluing datum over $\mathsf U$.
\vskip 0.1in
One
can also define  a {\em good  orbifold gluing structure}  $\cF$ for an orbifold stratified space as follows. 
\begin{definition}\label{good-orbi:gluing_atlas}
A {\em good  orbifold gluing structure}  $\cF$ for an orbifold stratified space $M$
is a collection of gluing data  satisfying  the following conditions:
\begin{enumerate}
\item[(i)]  the image of coarse gluing maps  associated  to $\mc F$ forms an open cover of $M$;
\item[(ii)]   $\mc F$ is closed under the induction for  restriction maps and closed under the induction for  gluing maps;
\item[(iii)]  $\mc F$ satisfies the sewing property in the sense that if $\cF$ has  a  pair  gluing data over open fullgroupoids  of a stratum of $\sfM$ which coincide over their intersection, then 
their   sewed datum  is also in $\mc F$. 
\item[(iv)] any boundary-type  gluing datum in $\cF$ has an inward-extension in $\cF$. 
\end{enumerate}
\end{definition}
\vskip 0.1in

 \nin {\bf Condition C'}   ({\bf Existence of  good orbifold gluing structure})  There exists a  good orbifold gluing structure
 $\cF$ for  the orbifold stratified space $M$. 
 \vskip 0.1in
 
We remark that a good orbifold gluing structure for  the  orbifold stratified space $M$   provides good  gluing structures for manifod stratified spaces $M^0$ and $M^1$.  The following theorem implies Theorem A in the Introduction.   The proof  is to apply  the same arguments in the proof of Theorem \ref{smooth_structure} to good gluing structures to $M^0$ and $M^1$ such that $ M^1\rightrightarrows M^0$ is a proper \'etale groupoid.  
 
\begin{theorem}\label{orb_structure}
Suppose that the orbifold
stratified space $ M$ has a  good orbifold 
gluing structure $\mc F$,
  then there exists a  particular gluing data 
  $$
\cG = \{ \mc A_\alpha=(\sfM_\alpha, \mfky^\a,
\epsilon_\alpha,\phi^\alpha,\Phi^\alpha)| \a \in \cS\} \subset \cF
$$
such that any pair in $\cG$  are gluing-compatible, hence, $C^\infty$-compatible. 
\end{theorem}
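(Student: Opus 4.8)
The plan is to reduce Theorem \ref{orb_structure} to the already-established manifold case, Theorem \ref{smooth_structure}, applied simultaneously to the object space $M^0$ and the arrow space $M^1$. First I would observe, as noted in the remark preceding the theorem, that a good orbifold gluing structure $\mc F$ restricts to give good gluing structures $\mc F^0$ and $\mc F^1$ on the manifold stratified spaces $M^0$ and $M^1$: the orbifold gluing bundles $\sfGl^\a = (\Gl^{1,\a}\rightrightarrows\Gl^{0,\a})$ produce, by Condition A', gluing bundles $\Gl^{i,\a}\to M^i_\a$ for $i=0,1$; the metrics, gluing maps $\phi^{i,\a}$, and bundle isomorphisms $\Phi^{i,\a}_\b$ all restrict componentwise; and the four conditions of Definition \ref{good-orbi:gluing_atlas} (open cover, closure under inductions, sewing, inward-extendibility) each hold for $\mc F^0$ and $\mc F^1$ because they are formulated on the full-subgroupoids $\mathsf U$, which are themselves determined by the pair of open pieces $U^0\subset M^0_\a$ and $U^1 = s^{-1}(U^0)\subset M^1_\a$. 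The subtlety here is that the source and target maps $s,t: M^1\to M^0$ are étale, so restriction maps and inward-extensions performed on $M^0$ lift canonically (and compatibly) to $M^1$.

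Next I would run the inductive construction of Theorem \ref{smooth_structure} on $M^0$, but carrying along the $M^1$-data at every stage so that the groupoid structure is preserved. Concretely, I would stratify $\mc S = \mc S_1\cup\cdots\cup\mc S_K$ by the rank of elements as in the proof of Theorem \ref{smooth_structure}, and inductively produce gluing data $\mc A^{(k)}_\a = (\sfM_\a, \mfky^\a, \epsilon^\a_{(k)}, \phi^\a, \Phi^\a)$ for $\a\in\bigcup_{i\le k}\mc S_i$ satisfying: (I) any pair is gluing-compatible; and (II) the separation property \eqref{eqn_seperable} holds on coarse spaces, $|R^{(k)}(\phi^\a)|\cap|R^{(k)}(\phi^\b)|\subset\bigcup_{\c\in\mc S_\a\cap\mc S_\b}|R(\phi^\c)|$. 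At step $k+1$, for $\a\in\mc S_{k+1}$ the previously constructed data induce, via the induction for gluing maps, a gluing datum over the boundary-type open full-subgroupoid $\mathsf U_\a$ generated by $\bigcup_{\c\prec\a}|R(\phi^\c_\a)|$; the inward-extendibility clause of Definition \ref{good-orbi:gluing_atlas} then yields $\mc A^{(k+1)}_\a$ over all of $\sfM_\a$, agreeing with the induced datum over a smaller boundary-type $\mathsf V_\a$. One then shrinks the $\epsilon$'s for the lower-rank data (to push the images $|R(\phi^\c_\a)|$ inside $\mathsf V_\a$) and for the new data (to enforce (II) among incomparable pairs in $\mc S_{k+1}$), exactly as in Steps 2 and 3 of the proof of Theorem \ref{smooth_structure}. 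Gluing-compatibility of any pair $\mc A^{(k+1)}_\a$, $\mc A^{(k+1)}_\b$ is then checked by the same three-case analysis: comparable indices use the coincidence over $\mathsf V_\b$; incomparable with $\mc S_\a\cap\mc S_\b=\emptyset$ use \eqref{eqn_seperable}; incomparable with nonempty intersection reduce to compatibility with the common lower strata via the identities \eqref{strongcondition}, which hold verbatim at groupoid level since they are componentwise on $M^0$ and $M^1$.

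I would then conclude: since $\mc S$ is finite the process terminates after $K$ steps, producing $\cG = \{\mc A_\a = (\sfM_\a, \mfky^\a, \epsilon_\a, \phi^\a, \Phi^\a) \mid \a\in\mc S\}\subset\mc F$ with all pairs gluing-compatible. By Lemma \ref{lem_2.6}, applied to the coordinate charts on $M^0$ and on $M^1$ coming from the gluing maps $\phi^{0,\a}$ and $\phi^{1,\a}$, gluing-compatibility upgrades to $C^\infty$-compatibility of the transition functions; since $\phi^\a$ is a strict groupoid morphism and each $\phi^{i,\a}_\b$ is a diffeomorphism onto its image (Remark \ref{gp_diff}), the resulting smooth atlases on $M^0$ and $M^1$ are intertwined by $s,t,m,u,i$, so $(M^1\rightrightarrows M^0)$ becomes a smooth proper étale groupoid and hence $M = |\sfM|$ inherits a smooth orbifold structure in which each stratum $M_\a = |\sfM_\a|$ is a smooth sub-orbifold, as claimed in Theorem A.

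The main obstacle I anticipate is bookkeeping the simultaneous $M^0$/$M^1$ construction so that every shrinking of $\epsilon^\a_{(k)}$ and every inward-extension is performed groupoid-equivariantly — that is, confirming that the étale maps $s,t$ let one choose the $M^1$-data as genuine restrictions/extensions of the $M^0$-data rather than as independent choices, and that the full-subgroupoid formalism of Remark \ref{opensubset} is stable under all the operations (restriction, sewing, inward-extension) used in the induction. Once this compatibility is pinned down, the rest is a faithful transcription of the proof of Theorem \ref{smooth_structure}.
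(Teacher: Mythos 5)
Your proposal is correct and follows essentially the same route as the paper, which proves Theorem \ref{orb_structure} precisely by applying the inductive argument of Theorem \ref{smooth_structure} to the good gluing structures induced on $M^0$ and $M^1$ while keeping the proper \'etale groupoid structure intact. In fact your write-up supplies more detail (the componentwise restriction of the atlas, the simultaneous bookkeeping on object and arrow spaces via the full-subgroupoid formalism) than the paper's own one-sentence reduction.
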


\begin{remark}\label{HS:morphism}
Using  the language of proper \'etale Lie (or topological) groupoids  to describe smooth (or 
topological) orbifolds, the correct notion of  morphisms between two groupoids should be generalised morphisms
 in the sense of \cite{HS} instead of
strict morphisms, and generalised isomorphisms instead of strict isomorphisms.
 Recall a generalised morphism between two proper \'etale   Lie groupoids 
$\mathsf G = (G^1\rightrightarrows G^0)$ and 
$\mathsf H =(H^1 \rightrightarrows H^0)$,
denoted by 
\[
 \xymatrix{ \mathsf G \ar@{-->}[r] & \mathsf H,
 }
 \] 
 is given by a covering 
groupoid $\mathsf G [\cU]$  of $\mathsf G$ associated to an open cover $\cU =\{U_i\}$  of $G^0$, together with 
a strict morphism 
$$\mathsf G [\cU] \longrightarrow \mathsf H.
$$
Here the covering 
groupoid $\mathsf G [\cU]$   is defined to be 
\[
\mathsf G [\cU] = (\bigsqcup_i \mathsf G_{U_i}^{U_j}  \rightrightarrows \bigsqcup_i  U_i )
\]
where $\mathsf G_{U_i}^{U_j} =\{ g\in G^1| s(g) \in U_i, t(g) \in U_j\}$ with the obvious source map and  target map
to $U_i$ and $U_j$ respectively.  There is an obvious strict morphism 
\[
\mathsf G [\cU] \longrightarrow \mathsf G 
\]
which is a strong equivalence. A generalised isomorphism (also called  a Morita equivalence) 
is a generalised morphism   such that the associated  strict
morphism $\mathsf G[ \cU] \to \mathsf H$ is a local isomorphism which induces a homeomorphism
between $|\mathsf G | = |\mathsf G [\cU]|$ and $|\mathsf H|$.  
 
Note that this notion of generalised morphisms and generalised isomorphisms makes sense for 
proper \'etale  topological groupoids. With this understood, then we can proceed to  define a good orbifold gluing structure
as the induction for restriction maps, the induction for gluing maps, the sewing property, and the inward-extendibility condition can be carried over accordingly. Moreover, the proof of Theorem \ref{smooth_structure}
can adapted to get a smooth structure for an orbifold stratified space with a good orbifold gluing structure
in the category of proper \'etale groupoids with morphisms given by generalised morphisms. 
\end{remark}  

As remarked in the introduction,   the disjoint of proper \'etale Lie groupoids
\[
\sfM = \left( \bigsqcup_{\a\in \cS}  M_\a^1 \rightrightarrows  \bigsqcup_{\a\in \cS}  M_\a^0\right) 
\]
often does not admit a topologica groupoid  structure,  as we  shall see 
in the Deligne-Mumford moduli spaces of stables curves.    
The ultimate goal is still to construct a $C^\infty$-compatible orbifold gluing atlas. We need to resolve the issue of both 
\[
\bigsqcup_{\a\in \cS} M^1_\a \ \ \text{and} \ \ \bigsqcup_{\a\in \cS} M^0_\a
\]
have no topological structure. We remark that  orbifold gluing bundles  still make sense, but
gluing maps in Definition \ref{orb_gluing_datum},  such as 
\[\xymatrix{
\phi^\a:  \sfGl^\a(\epsilon)|_{\mathsf U} \ar@{-->}[r] &  \mathsf M^a, }
\]
don't make sense as  $ \mathsf M^a$ is not a topological groupoid. We point out that  the  stratum-wise gluing map
 \[
 \xymatrix{
\phi^\a_\b:  \sfGl^a_\b(\epsilon)|_{\mathsf U}  \ar@{-->}[r] &  \mathsf M_\b
}
\]
 as a generalised isomorphism onto its image is well-defined.  We can just treat the gluing 
 map
 \[\xymatrix{
\phi^\a:  \sfGl^\a(\epsilon)|_{\mathsf U} \ar@{-->}[r] &  \mathsf M^a, }
\] 
 as a generalised isomorphism on the level set-theoretical groupoids. That is, a generalised morphism
 from a  proper \'etale Lie groupoid $\mathsf G$ to a groupoid $\mathsf H$  (not necessarily a topologial
 groupoid)
 is given  by a covering groupoid $\mathsf G [\cU]$ and a strict morphism from $\mathsf G [\cU]$
 to $\mathsf H$ as a set-theoretical groupoid. A  generalised isomorphism
 from a  proper \'etale Lie groupoid $\mathsf G$ to a groupoid $\mathsf H$ is a  generalised morphism
  such that the associated  strict
morphism $\mathsf G[ \cU] \to \mathsf H$ is locally bijecitve and induced a bijective map from
the topological space $|\mathsf G[ \cU]$ to a point-set $|\mathsf H|$.  

Then the notion of good orbifold atlases still makes sense as 
  inductions for restriction maps, inductions for stratum-wise  gluing maps,  sewing property,
   the inward-extendibility condition, gluing compatibility condition   still make sense.   This is due to fact that all these notions only involve smooth structures on the domain groupoids such as $\sfGl^a$. 
   We can then  proceed to establish Theorem
  \ref{orb_structure} even though   the disjoint of proper \'etale Lie groupoids $\{\mathsf M_\a\}_{\a\in \cS}$
  is not a topological groupoid.  We still  achieve a smooth orbifold on the orbifold stratified 
  space $M$  with its orbifold  groupoid 
 $
  \bigsqcup_{\a\in \cS} \sfGl^a (\epsilon)  
 $
  obtained from the particular 
  gluing atlas
  \[
\cG = \{ \mc A_\alpha=(\sfM_\alpha, \mfky^\a,
\epsilon_\alpha,\phi^\alpha,\Phi^\alpha)| \a \in \cS\} 
  \]
    We shall explain how this can be done for the Deligne-Mumford moduli spaces of stable curves.

\section{Moduli spaces of stable curves as orbifold  stratified spaces} \ 

This section is mostly a review of moduli spaces of stable curves.  


\subsection{Teichm\"uller space  and moduli space of Riemann surfaces (top stratum)}  We start with the Teichm\"uller space for
genus $g$  Riemann surfaces with $n$-marked points which play a central role in the description of
moduli space of Riemann surfaces.

Denote by $ \Sigma_{g, n}$  a   genus $g$ smooth oriented compact surface $\Sigma$  with  ordered $n$-marked points $\{p_1, p_2, \cdots, p_n\}$.   Given a genus $g$ compact Riemann surface  with   $n$-marked points
  \[
(C, \{x_1, x_2, \cdots, x_n\}),
\]
a Teichm\"uller structure on $(C, \{x_1, x_2, \cdots, x_n\}) $ is  the datum of  the  isotopy class  $[f]$ of an orientation preserving diffeomorphism
\[
f:  (C,  \{x_1, x_2, \cdots, x_n\}) \longrightarrow \Sigma_{g, n} =  (\Sigma, \{p_1, p_2, \cdots, p_n\}),
\]
where the allowable isotopies are those which map $x_i$ to $p_i$ for each $i=1, 2, \cdots, n$.
Two  genus $g$ compact Riemann surfaces  with Teichm\"uller structures
\[
(C, \{x_1, x_2, \cdots, x_n\};  [f]) \qquad \text{\ and\ } \qquad (C', \{x'_1, x'_2, \cdots, x'_n\};  [f'])
\]
are called isomorphic if there is an isomorphism (a biholomorphism preserving the ordered $n$-marked points)
\[
\phi:  (C, \{x_1, x_2, \cdots, x_n\}) \longrightarrow (C', \{x'_1, x'_2, \cdots, x'_n\})
\]
such that $[f'\circ \phi ] =[f]$.  The  Teichm\"uller space $\cT_{g, n} $  of $\Sigma_{g, n}$ is the set of isomorphism classes of genus $g$,  $n$-marked  compact Riemann surface  with
Teichm\"uller structures.  Any orientation preserving
diffeomorphism between two  genus $g$ smooth oriented compact surfaces with $n$-marked points induces
a canonical  identification between their  Teichm\"uller spaces.  This justifies the simplified notation $\cT_{g, n}$.

Let  $\diff(\Sigma_{g, n})$ be the subgroup of  orientation-preserving  diffeomorphism   group $\diff(\Sigma)$ that fix the $n$ marked point and $\diff_0(\Sigma_{g, n} ) $  be  the   identity component of $\diff(\Sigma_{g, n})$.
The mapping class group of $\Sigma_{g, n}$, denoted by $\Mod_{g, n}$, is
the group of all isotopy classes of orientation-preserving diffeomorphisms of  $\Sigma_{g, n}$, that is,
\[
\Mod_{g, n} = \diff(\Sigma_{g, n})/\diff_0(\Sigma_{g, n}).
\]
 The mapping class group acts naturally on $\cT_{g, n} $ given by
\[
[\gamma]\cdot \big[C, \{x_1, x_2, \cdots, x_n\};  [f] \big] =  \big[C, \{x_1, x_2, \cdots, x_n\};  [\gamma \circ f] \big].
\]
The quotient space of  $\cT_{g, n}$ by $\Mod_{g, n} $ is the moduli space    $M_{g, n}$ of
  genus $g$ Riemann surfaces with  $n$-marked points, this follows from the identifications
\begin{itemize}
\item $\cT_{g, n} =   \cJ(\Sigma)   /\diff_0(\Sigma_{g, n}) =   \big( \cJ(\Sigma) \times  (\Sigma^n \backslash \Delta ) \big) /\diff_0(\Sigma),$
\item $M_{g, n} =  \cJ(\Sigma)   /\diff (\Sigma_{g, n})  = \big( \cJ(\Sigma) \times  (\Sigma^n \backslash \Delta ) \big) /\diff(\Sigma).$
\end{itemize}
Here $\cJ(\Sigma)$ is the space of complex structure on $\Sigma$, and $\Delta$ is the big diagonal so that
$\Sigma^n \backslash \Delta$ is the sub-manifold of $\Sigma^n$ consisting of $n$-distinct points of $\Sigma$.

In \cite{EE69} and \cite{RoSa}, it was showed  that
\[
\xymatrix{
\diff_0(\Sigma) \ar[r] &\cT(\Sigma)  \times  (\Sigma^n \backslash \Delta ) \ar[d]\\
& \cT_{g, n}
}
\]
is a principal fiber bundle. The  associated fiber bundle
\[
\pi_{g, n}:  \big( \cJ(\Sigma) \times  (\Sigma^n \backslash \Delta ) \big) \times_{\diff_0(\Sigma)}  \Sigma \longrightarrow \cT_{g, n}
\]
  for the action of $\diff_0(\Sigma)$ on $\Sigma$ is
a fiber bundle with  fibers diffeomorphic to $\Sigma$ and  having $n$ distinguished  (disjoint) sections.  This fiber bundle is  the
universal curve of genus $g$ with $n$ marked points, will be simply  denoted by
\[
\xymatrix{
\cC_{g, n} \ar[d]_{\pi_{g, n}}  & \\
\cT_{g, n}   \ar@/_1pc/[u]_{ \sigma_{i}, \  i=1, 2, \cdots, n.}
}
\]
 
 The mapping class group $\Mod_{g, n}$  acts on  $ \cT_{g, n}$ as  a properly discontinuous group of
holomorphic transformations. This  action defines an  orbifold structure on $M_{g, n}$.  In terms of
proper \'etale groupoids, this orbifold structure is defined by
the action groupoid
\[
 \cT_{g, n} \rtimes \Mod_{g, n} \rightrightarrows \cT_{g,n}.
\]

In general,  given a family of   genus $g$,  $n$-marked Riemann surfaces $\pi: \cC\to B$, there is a canonical construction to get a proper \'etale Lie groupoid whose unit space is $B$. When $B$ is  the
Teichm\"uller space $\cT_{g, n}$,  then the resultant groupoid is exactly the above action groupoid. 
As this construction is very useful in practics. We devote the next subsection to this construction.

\subsection{Canonical construction of proper 'etale groupoids}\label{sec:groupoidization}
Consider a smooth family of genus $g$,  $n$-marked Riemann surface 
\[
\xymatrix{
\cC  \ar[d]_{\pi}  & \\
B   \ar@/_1pc/[u]_{ \sigma_{i}, \  i=1, 2, \cdots, n . }
}
\]
consisting a smooth fiber bundle $\pi: \cC\to B$ and $n$-disjoint sections $ \sigma_i, i=1, 2, \cdots, n$.  Denote by
\[
(C_b =\pi^{-1}(b), \{\sigma_1(b), \cdots, \sigma_n(b)\}, [f_b])
\]
 the genus $g$ $n$-marked  Riemann surface with Teichm\"uller  structure   for each $b\in B$.  We can construct a
  proper \'etale Lie   groupoid  
 \[
 \mathsf{B}= ( G^1 \rightrightarrows G^0 ) =  (G^1,  G^0, s, t,m,  u, i)
\]
where $G^0 = B$, and $G^1  $ consists of triples
\[
(b_1, \psi, b_2),
\]
for  $b_1, b_2 \in B$ and
 \[
 \psi \in  \text{Isom} \big( (C_{b_2},   \{\sigma_1(b_2), \cdots, \sigma_n(b_2)\}), (C_{b_1},   \{\sigma_1(b_1), \cdots, \sigma_n(b_1)\})\big).
 \]
 The source and tail map $(s,  t)$ are given by the obvious projections
 \[
 s(b_1, \psi, b_2) = b_2, t(b_1, \psi, b_2) = b_1.
 \]
   For any composable pair
 \[
 (b_1, \psi, b_2) \qquad \text{and} \qquad (b_2, \phi, b_3),
 \]
 the groupoid multiplication is defined
 \[
 m ((b_1, \psi, b_2),   (b_2, \phi, b_3) ) =   (b_1, \psi, b_2)\cdot  (b_2, \phi, b_3) = (b_1,   \psi \circ \phi, b_3).
 \]
 The inverse map  $i$  and the unit map $u$ are given by
 \[
i (b_1, \psi, b_2) =  (b_2, \psi^{-1}, b_1)   \qquad \text{and} \qquad  u(b) =  (b, Id, b)
\]
respectively.  To be consistent with our notations, we denote this groupoid by
\begin{equation}
\mathsf B=(\mc A_{\mc C}(B)\rightrightarrows B).
\end{equation}
Here $\mc A_{\mc C}(B)$
 denote the space of arrows among $B$ that are 
 generated from the family $\mc C$. 
\def \wtA{\widetilde{\mathcal A}}

Similarly, we can get a proper \'etale Lie groupoid for  the family $\mc C\to B$ $$
\mathsf C=(\mc A_{\mc C}(\mc C)\rightrightarrows\mc C).
$$
Here $\mc A_{\mc C}
(\mc C)$ is a fibration over $\mc A(\mc C)$: the fiber over $(b_1,\psi,b_2)$ consists 
of triple $(x_1,\psi, x_2)$ where $x_1\in\pi^{-1}(b_1)$ and $x_2=\psi(x_1)$. There is a groupoid  fibration,
 \[
\xymatrix{
\mc A_{\mc C}(\mc C) \ar@<.5ex>[d]\ar@<-.5ex>[d] \ar[r]^{\pi_1} &  \mc A_{\mc C}(B) \ar@<.5ex>[d]\ar@<-.5ex>[d]\\
\mc C \ar[r]_{\pi_0} & B. }
\]
We remark that $\mathsf C$ is the action groupoid associated to the canonical
action of $\mc A_{\mc C}(B) $ on $\mc C$. 

Applying this construction  to the universal family $\mc C_{g,n}\to
\mc T_{g,n}$, we have the groupoid for $M_{g,n}$, i.e, 
$$
\sfM_{g,n}=(\mc A_{\mc C_{g,n}}(\mc T_{g,n}))\rightrightarrows \mc T_{g,n}).
$$
In fact, one can show that 
$$
\mc A_{\mc C_{g,n}}(\mc T_{g,n})\cong \mc T_{g,n}\rtimes \mathrm{Mod}_{g,n}
$$
Then we also have a universal family
$$
\mathsf C_{g,n}=(\mc A_{\mc C_{g,n}}(\mc C_{g,n})\rightrightarrows \mc C_{g,n}),
$$ 
where $\mc A_{\mc C_{g,n}}(\mc C_{g,n}) \cong \mc C_{g,n}\rtimes \mathrm{Mod}_{g,n}$.  

\begin{remark}
Given a smooth family $\pi:\mc C\to  B$, we can certainly have a map $B\to |\sfM_{g,n}|$ or a map
$|\mathsf B|\to |\sfM_{g,n}|$. However we usually do not have  a smooth map from $B$ to $\cT_{g,n}$, hence, the strict morphism between $\mathsf B$ and $\sfM_{g,n}$.  
Instead,
 we  have a  generalised   morphism    
 \[
 \xymatrix{ \mathsf B\ar@{-->}[r] & \sfM_{g,n},
 }
 \]
 in the sense of Remark \ref{HS:morphism}. We briefly review this construction.   Let $b$ be any point in $B$ then there exists a small neighborhood
$U_b$ of  $b$ so that we have a morphism of families
\ba\label{local:family}
    \xymatrix{
\cC|_{U_b} \ar[d]^{\pi} \ar[rr]^{\Phi_b} & &   \cC_{g, n} \ar[d]_{\pi_{g, n}}  & \\
U_b\ar[rr]^{\phi_b}   & & \cT_{g, n}.  
}
\na
Let 
$$
B'=\bigsqcup_{b\in B} U_b,\;\;\;\mc C'=\bigsqcup_{b\in B}\mc C|_{U_b}.
$$
Apply the above canonical groupoid construction to the family $\cC'\to B'$, we get  a proper \'etale Lie groupoid  
\[
\mathsf B' =   (\mc A_{\mc C'}(B')\rightrightarrows B').
\]
It is easy to see that the following diagram 
 \[
\xymatrix{
\mc A_{\mc C'}(\mc B') \ar@<.5ex>[d]\ar@<-.5ex>[d] \ar[r]^{\pi_1} &  \mc A_{\mc C}(B) \ar@<.5ex>[d]\ar@<-.5ex>[d]\\
 B' \ar[r]_{\pi_0} & B  }
\]
define a strong equivalence of Lie groupoids. A strict morphism
\[
\mathsf B' \longrightarrow \mathsf M_{g, n}
\]
can be obtained from the morphism of families in (\ref{local:family}). 
\end{remark}
\subsection{Moduli space of stable curves} \

The  moduli space $M_{g, n}$ is not compact.  It was shown in \cite{DM69} \cite{Knu} that   $M_{g, n}$  can be compactified by adding certain    genus $g$   curves with  $n$-marked points and nodal points.   This compactification is  called  the Deligne-Mumford compactification.

\begin{definition} A   stable curve  $C$ with $n$-marked points $x_1, x_2, \cdots, x_n$ is a  connected
compact complex algebraic curve satisfying the following conditions.
\begin{enumerate}
\item The only non-smooth points are nodal points, locally modelled on the origin in
\[
\{(z_1, z_2) \in \C^2|  z_1 z_2 =0\}.
\]
\item The marked points are distinct smooth points.
\item  The automorphism group of $(C, \{x_1, x_2, \cdots, x_n\})$ is a finite group.
\end{enumerate}
\end{definition}

\begin{remark} In order to better understand the definition of stable curves, a few remarks are needed.
 \begin{enumerate}
\item Topologically, the neighbourhood of a nodal point is homeomorphic to a union of two
discs with their centers identified. So removing a nodal point locally  gives rise to  two
discs with their centers deleted.

\item A nodal point can   be  {\em smoothened}  by replacing two  discs with joint centres
by a cylinder. If all nodal points  in a stable curve $C$ are  smoothened, then the resulting surface is connected. The genus of the resulting surface is called the {\bf genus}  of a stable curve   $C$.

\item  A nodal point is {\bf normalized}  if two discs with joint centres are replaced by disjoint discs.   The {\bf normalization}  of a stable curve $C $ is the smooth
curve obtained from $C$  by normalizing all its nodal points, equivalently,
  the normalization of a stable curve $C $ with the  finite set $\tau$  of nodal points  is a compact smooth Riemann surface $\Sigma = \bigsqcup_v \Sigma_v$ together with a map
  \[
  f: \Sigma \to C
  \]
  such that
  \begin{enumerate}
\item[(i)] $f:  \Sigma \backslash f^{-1}(\tau) \longrightarrow C\backslash \tau$ is biholomorphic.
\item[(ii)] For each nodal point $z\in \tau$, $f^{-1}(z)$ consists of two point.
\end{enumerate}
Let $\Sigma =  \bigsqcup_v \Sigma_v$ be the  normalization of a stable curve $C $.  The image of $\Sigma_v$ under $f$ will be called an {\bf irreducible component}  of $C$.  Each  component $\Sigma_v$ is a smooth Riemann surface with special points consisting of  ordered marked points
\[\Sigma_v \cap f^{-1} ( \{ x_1, x_2, \cdots, x_n\})
\]
and unordered marked points (preimages of the nodes on $\Sigma_v$).    The number of special points on $\Sigma_v$ is denoted by $m_v$.
  Then
\[
|Aut (C, \{x_1, x_2, \cdots, x_n\})| <  \infty
\]
if and only if
\[
 2g_v -2 + m_v > 0,
 \]
   for each component $\Sigma_v$ of $\Sigma$,
 where $g_v$ is the genus of $\Sigma_v$.
Note that   there is a short exact sequence of automorphism groups
\[
1 \to \prod_v Aut(\Sigma_{v, m_v}) \longrightarrow Aut(C,  \{ x_1, x_2, \cdots, x_n\}) \longrightarrow Aut(\Gamma) \to 1,
\]
where $ Aut(\Gamma)$ is the automorphism group of  the weighted dual graph  $\Gamma$ of $C$, and
$Aut(\Sigma_{v, m_v})$ is the subgroup of the automorphism of $\Sigma$ fixing  the  marked   points and the set of unordered marked points.
\end{enumerate}
\end{remark}

\begin{definition}
The coarse moduli space $\overline{M}_{g, n}$  is the  space  of
isomorphism classes of  genus $g$ stable curves with $n$-marked points.   It is the result of
Deligne-Mumford-Knudsen that this space is compact and will be  called the Deligne-Mumford-Knudsen compactification
of the coarse moduli space $M_{g, n}$, denoted by $\overline M_{g, n}$.
\end{definition}

\begin{remark}
 In fact, it is now well-known that $\overline M_{g, n}$ has  a compact complex orbifold structure. We shall denote   the resulting orbifold by $\overline \sfM_{g, n}$.   The proof of this fact  requires a
 construction of (local) universal curves over $\overline M_{g, n}$, see \cite{DM69} \cite{Knu} and \cite{RoSa}.  In the remaining part of this
 paper, we instead apply the gluing principle developed in Section \ref{sec:2}  to provide an orbifold atlas
 on $\overline M_{g, n}$. 
 The main analysis is to show that
 $\overline M_{g, n}$ admits
 a good orbifold  gluing atlas as in Definition \ref{good-orbi:gluing_atlas}.
  Note that  Fukaya and Ono outlined a differential geometric way to endow $\overline M_{g, n}$ with a
 complex orbifold atlas in \cite{FO99}. What we have done below in some sense is to provide a complete detailed gluing theory
 for  $\overline M_{g, n}$  as outline in \cite{FO99}.  
 \end{remark}

Considering  a genus $g$ stable curve  $C$  with $n$-marked points,  its
 topological types  is   classified by the weighted dual graph which we now review.

\begin{definition} A {\bf weighted  dual graph}  $\Gamma$ is a connected graph  together with the assignment of a nonnegative integer weight  to each vertex, denoted by
\[
(V(\Gamma), E(\Gamma), T(\Gamma), g:V(\Gamma) \to \Z_{\geq0},  \ell:   T(\Gamma) \to \{1, 2, \cdots, n\})
\]
where
\begin{itemize}
\item $V(\Gamma)$ is a finite nonempty set of vertices with a weighted function $g: V(\Gamma) \to \Z_{\geq0}$
assigning  a nonnegative
integer $ g_v$  to each vertex $v$.
\item $E(\Gamma)$ is a finite set of edges.
\item $T(\Gamma)$ is a finite set of  $n$-labelled tails with a partition   indexed by $V(\Gamma)$, that is, $T(\Gamma) = \bigsqcup_{v\in V(\Gamma)} T_v$ and   the labelling is  given by
a bijiective  map:  $\ell:   T(\Gamma) \to \{1, 2, \cdots, n\} $.
\end{itemize}
The {\bf genus } of a  weighted dual graph $\Gamma$  is defined to be
\[
g(\Gamma) = \sum_{v\in V(\Gamma)}  g_v + b_1(\Gamma)
\]
where $ b_1(\Gamma)$ is  the first Betti number of the graph $\Gamma$.  A graph $\Gamma$ is called   {\bf
stable}
if for every $v\in V(\Gamma)$,
\ba\label{negative:Euler}
2- 2g_v   -  m_v  <  0
\na
where $m_v$ denotes the valence of  $\Gamma$ at $v$,   the sum of the number of legs  attached to $v$ (cf. Remark \ref{leg}).
\begin{remark}\label{leg}
An edge consists of two half-edges. 
By a leg of $\Gamma$ we mean either a tail or a half-edge.
\end{remark}
Two weighted stable genus $g$ dual graph with $n$-labelled tails
$\Gamma_1$ and $\Gamma_2$ are called isomorphic if there exists a bijection between their
vertices, edges and tails respecting all the relevant structure.
 Denote by  $\cS_{g, n}$  the set  of   isomorphism classes of  weighted stable genus $g$ dual graph with $n$-labelled tails. For a weighted graph $\Gamma$ we denote the class  by $[\Gamma]$.
\end{definition}

Let $\Gamma$ be such a weighted dual  graph. For any edge $e\in E(\Gamma)$ we may contract the edge $e$ to get a new weighted graph $\Gamma'$:
$V(\Gamma')$ and $E(\Gamma')$ are defined in an obvious way; the weight of new vertex is defined such that the genus of $\Gamma'$ is still $g$. Then it is easy to see that $\Gamma'$ is still a stable graph. We may also contract several edges simultaneously. Let $D\subset E(\Gamma)$ be
a subset of edges, then the graph after contracting edges in $D$ is denoted by $Ctr_D(\Gamma)$.

 Given $[\Gamma_1], [\Gamma_2] \in \cS_{g, n}$, we say that $[\Gamma_1]\prec  [\Gamma_2]$   if and only if
 there exist representatives $\Gamma_1$ and $\Gamma_2  $ for  $[\Gamma_1]$ and $[\Gamma_2] $
 respectively, such that $\Gamma_2$ is obtained from a contraction of $\Gamma_1$ along a subset
 of  $E(\Gamma_1)$.   The following lemma is a well known result and we skip the proof.

   \begin{lemma}
 $(\cS_{g, n},\prec)$ is a partially ordered  finite set with a unique top element given by a weighted  dual graph $\Gamma$
 with no edges and  only one vertex of weight $g$ and $n$-labelled tails.
\end{lemma}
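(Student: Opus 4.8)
The plan is to verify directly the three claims in the lemma: that $\prec$ is a partial order, that $(\cS_{g,n},\prec)$ is finite, and that it has a unique top element given by the one-vertex, no-edge, weight-$g$, $n$-tailed graph. I would begin with finiteness: a stable weighted dual graph of type $(g,n)$ has $g(\Gamma) = \sum_v g_v + b_1(\Gamma) = g$, so $b_1(\Gamma) \le g$ and $\sum_v g_v \le g$; combined with stability $2g_v - 2 + m_v > 0$ (equivalently $m_v \ge 3$ when $g_v = 0$, $m_v \ge 1$ when $g_v = 1$), one bounds the number of vertices, then the number of edges and tails, so $\cS_{g,n}$ is finite. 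This is a routine counting argument that I would state concisely rather than belabor.

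Next I would check that $\prec$ is a partial order. Reflexivity is immediate (contract the empty subset of edges). For transitivity, the key observation is that contractions compose: if $\Gamma_2 = Ctr_{D_1}(\Gamma_1)$ and $\Gamma_3 = Ctr_{D_2}(\Gamma_2)$ for $D_2 \subset E(\Gamma_2) = E(\Gamma_1) \setminus D_1$, then $\Gamma_3 = Ctr_{D_1 \cup D_2}(\Gamma_1)$; the weights are determined in all cases by the requirement that the genus stay equal to $g$, so they agree. Antisymmetry is where I would be most careful: if $[\Gamma_1] \prec [\Gamma_2]$ and $[\Gamma_2] \prec [\Gamma_1]$, then contracting a nonempty set of edges strictly decreases $|E(\Gamma)| + |V(\Gamma)|$ (each edge contraction removes one edge and merges at most one pair of vertices, and strictly decreases $|E|$), so the only way to have contractions in both directions is for both contracted edge sets to be empty, forcing $\Gamma_1 \cong \Gamma_2$. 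One must be slightly careful that $\prec$ as defined is a priori only a relation whose transitive structure needs the composition-of-contractions fact; I would make that explicit.

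For the top element, let $\Gamma_0$ be the graph with one vertex of weight $g$, no edges, and $n$ labelled tails; this is stable precisely when $2 - 2g - n < 0$, i.e. $2g + n \ge 3$, which is the standing assumption on $(g,n)$. Given any $[\Gamma] \in \cS_{g,n}$, contracting all edges of $\Gamma$ merges all vertices into one; the resulting vertex has weight $\sum_v g_v + b_1(\Gamma) = g$ by the genus formula, no edges remain, and the $n$ tails are unchanged, so $Ctr_{E(\Gamma)}(\Gamma) \cong \Gamma_0$ and hence $[\Gamma] \prec [\Gamma_0]$. Uniqueness of the top element follows from antisymmetry: if $[\Gamma_0']$ were another top element then $[\Gamma_0] \prec [\Gamma_0']$ and $[\Gamma_0'] \prec [\Gamma_0]$, so $[\Gamma_0] = [\Gamma_0']$.

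The main obstacle, such as it is, is antisymmetry of $\prec$: one has to rule out a cycle of nontrivial contractions, and the cleanest way is the strict-monotonicity of the quantity $|E(\Gamma)| + |V(\Gamma)|$ (or just $|E(\Gamma)|$) under nonempty contraction, together with the observation that the only graph isomorphisms are edge- and tail-preserving. Everything else is bookkeeping with the genus formula and the stability inequality, so I would present the proof in roughly the order above and keep the finiteness count brief. Since the excerpt itself says this is well known and skips the proof, a short, self-contained argument along these lines suffices.
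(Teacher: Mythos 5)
The paper does not actually supply a proof of this lemma---it states ``The following lemma is a well known result and we skip the proof''---so there is nothing to compare against; your argument stands on its own and it is correct and complete. The three points you isolate are exactly the ones that need checking, and each is handled soundly: antisymmetry via the strict decrease of $|E(\Gamma)|$ under contraction along a nonempty edge set (note this works even for loop edges, which merge no vertices but still disappear), transitivity via composition of contractions with weights pinned down by the genus-$g$ constraint, and the top element obtained by contracting all edges of a connected graph, the single surviving vertex acquiring weight $\sum_v g_v + b_1(\Gamma) = g$. For the finiteness count, the slickest route is the identity $\sum_{v}(2g_v - 2 + m_v) = 2g - 2 + n$ (using $\sum_v m_v = n + 2|E(\Gamma)|$ and $b_1 = |E|-|V|+1$), which together with stability immediately bounds $|V(\Gamma)|$ by $2g-2+n$ and then $|E(\Gamma)|$ by $g + |V(\Gamma)| - 1$; your sketch amounts to the same thing and is fine. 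The one point worth making fully explicit in a written version is the one you already flag: since $\prec$ is defined on isomorphism classes via a choice of representatives, transitivity requires transporting the second contraction set across an isomorphism $Ctr_{D_1}(\Gamma_1)\cong\Gamma_2'$ before composing, which is routine.
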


 Given  a genus $g$ stable curve  $C$  with $n$-marked points
\[
(C, \{x_1, x_2, \cdots, x_n\}),
\]
we can associate it a weighted  dual graph $\Gamma$  as follows.
There is a vertex  for each irreducible component of $C$ with its weight given by the genus of the component, and its legs
labelled by the marked points on the component, and there is an  edge between a  pair (not necessarily different) of vertices for each nodal point between their components.  One  can check that $\Gamma$ is a  stable weighted  dual graph of genus $g$  with
$n$-labelled legs.

 Given  a   weighted  dual graph  $\Gamma \in \cS_{g, n} $ with $2g-2 +n >0$,  denote  by $M_{\Gamma}$ be the set of  isomorphism classes of genus $g$ stable curve  $C$  with $n$-marked points whose weighted dual graph is  $\Gamma$.   If $\Gamma_1 \cong \Gamma_2$, then  $M_{\Gamma_1} \cong  M_{\Gamma_2} $ set theoretically.   Let   $M_{[\Gamma]}$ be the set of  isomorphism classes of genus $g$ stable curve  $C$  with $n$-marked points whose weighted dual graph   $\Gamma$  belongs to the class $[\Gamma]$.

 The following proposition   is well-known.   We include the proof for the convenience of readers.

\begin{proposition}\label{DM:orbi-stra}
The coarse moduli space $\overline{M}_{g, n}$ has  an orbifold stratified structure
\[
\overline{M}_{g, n} =  \bigsqcup_{[\Gamma] \in \cS_{g, n}} M_{[\Gamma]}
\]
with respect to  $(\cS_{g, n},\prec)$.
\end{proposition}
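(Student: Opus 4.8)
The plan is to verify, in turn, the four conditions of Definition~\ref{def:1.1} for the decomposition $\overline{M}_{g,n}=\bigsqcup_{[\Gamma]\in\cS_{g,n}}M_{[\Gamma]}$, where $\cS_{g,n}$ carries the partial order $\prec$ by edge--contraction introduced above and $\overline{M}_{g,n}$ is equipped with its usual compact Hausdorff Deligne--Mumford topology. The first point is essentially formal: every stable $n$--pointed genus $g$ curve determines a weighted dual graph, well defined up to isomorphism, and isomorphic curves have isomorphic dual graphs, so each isomorphism class of stable curves lies in exactly one $M_{[\Gamma]}$. Hence the decomposition is a genuine set--theoretic disjoint union indexed by $\cS_{g,n}$.

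For condition (3) (and (4)) I would construct the groupoid $\sfM_{[\Gamma]}$ on each stratum. Fix a representative $\Gamma$ with vertices $v$ of genus $g_v$ and valence $m_v$ (number of tails plus half--edges at $v$), so $2g_v-2+m_v>0$. Form the \emph{Teichm\"uller space of type $\Gamma$}, $\widetilde{\cT}_\Gamma:=\prod_{v\in V(\Gamma)}\cT_{g_v,m_v}$; over it the product of universal curves, glued along the pairs of marked points prescribed by the edges of $\Gamma$, gives a smooth family of stable curves of topological type $\Gamma$. Applying the canonical construction of Section~\ref{sec:groupoidization} to this family yields a proper \'etale Lie groupoid $\sfM_{[\Gamma]}=(\mathcal A(\widetilde{\cT}_\Gamma)\rightrightarrows\widetilde{\cT}_\Gamma)$ with $|\sfM_{[\Gamma]}|=M_{[\Gamma]}$; equivalently $\sfM_{[\Gamma]}$ is Morita equivalent to the action groupoid of $\big(\prod_v\Mod_{g_v,m_v}\big)\rtimes\mathrm{Aut}(\Gamma)$ on $\widetilde{\cT}_\Gamma$, which is proper and \'etale because each $\cT_{g_v,m_v}\rtimes\Mod_{g_v,m_v}$ is and $\mathrm{Aut}(\Gamma)$ is finite. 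Taking $\Gamma=\Gamma_0$, the unique top element (one vertex of weight $g$, $n$ tails, no edges), recovers $M_{[\Gamma_0]}=M_{g,n}=|\cT_{g,n}\rtimes\Mod_{g,n}|$, a smooth orbifold of complex dimension $3g-3+n$; this settles (4).

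The substantive part is conditions (1) and (2), and here I would appeal to the local deformation theory of stable nodal curves. For $[C]\in M_{[\Gamma]}$, a versal deformation of $(C,\{x_1,x_2,\cdots,x_n\})$ is built by combining deformations of the (rigidified) normalization with one plumbing parameter $t_e$ per node: a neighbourhood of $[C]$ in $\overline{M}_{g,n}$ is modelled on $\big(B\times\prod_{e\in E(\Gamma)}\Delta_e\big)\big/\mathrm{Aut}(C,\{x_1,x_2,\cdots,x_n\})$, and the fibre over a parameter with $t_e\ne 0$ precisely for $e$ in a subset $D\subseteq E(\Gamma)$ has dual graph $Ctr_D(\Gamma)$. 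Since there are only finitely many such $D$, only finitely many strata meet a small neighbourhood of $[C]$, which is condition (1). The same model gives $M_{[\Gamma]}\subset\overline{M}_{[Ctr_D(\Gamma)]}$ for each $D$, hence $[\Gamma]\prec[\Gamma']\Rightarrow M_{[\Gamma]}\subset\overline{M}_{[\Gamma']}$; conversely, if $M_{[\Gamma]}\cap\overline{M}_{[\Gamma']}\ne\emptyset$ then any $[C]$ in the intersection is a limit of type--$\Gamma'$ curves, so the local model above forces $\Gamma'\cong Ctr_D(\Gamma)$ for some $D$, i.e. $[\Gamma]\prec[\Gamma']$, and combined with the first implication this yields $M_{[\Gamma]}\subset\overline{M}_{[\Gamma']}$. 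This is condition (2), and it also shows each $M_{[\Gamma]}$ is locally closed in $\overline{M}_{g,n}$.

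I expect the main obstacle to be exactly this last step: producing the plumbing/versal family of a stable nodal curve and matching its degenerate fibres precisely with the edge--contractions $Ctr_D(\Gamma)$, since that single geometric input simultaneously delivers local finiteness and the full closure relation; everything else is bookkeeping with dual graphs together with the groupoidization machinery already in place. As the statement is classical, in the paper I would write down the plumbing model explicitly, cite \cite{DM69,Knu,RoSa}, and then carry out the routine contraction/un--contraction correspondence on weighted dual graphs.
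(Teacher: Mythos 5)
Your proposal is correct and follows essentially the same route as the paper: the heart of both arguments is building the proper \'etale groupoid $\sfM_{[\Gamma]}$ by applying the canonical groupoidization of Section~\ref{sec:groupoidization} to the family of nodal curves over $\cT_\Gamma=\prod_v\cT_{g_v,(p_v,q_v)}$ obtained by identifying the paired sections of the normalized universal family. The only difference is that you sketch the plumbing/versal-deformation argument for local finiteness and the closure relations, whereas the paper simply cites these as known (via Fenchel--Nielsen/plumbing coordinates, \cite{DM69}, \cite{Knu}, \cite{RoSa}); this is a matter of how much of the classical input is spelled out, not a different method.
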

\begin{proof} The coarse moduli space $\overline M_{g, n}$ is a compact  Hausdorff topological
space.  This can be proved without resorting to the algebraic geometry machinery as in \cite{ACG}. One can extend the the Fenchel Nielsen coordinates to $\overline M_{g, n}$ to show that $\overline M_{g, n}$ is a compact  Hausdorff topological space. See for example  in  \cite{SS} and \cite{EM}.     In \cite{RoSa}, a pure differential geometry proof of this result is provided. 

 Next   we show that each stratum   $M_{[\Gamma]}$ is a smooth orbifold.   Fix a representative $\Gamma$ in the isomorphism class $[\Gamma]\in \cS_{g, n}$.
The normalisation of $\Gamma$ is the new weighted  graph
\[
\tilde \Gamma =\bigsqcup_{v\in V(\Gamma)} \Gamma_v
\]
obtained by severing all the edges in $\Gamma$,   where each connected component $\Gamma_v$ has only one vertex $\{v\}$  of genus $g_v$ with   $p_v$ ordered tails and
$q_v$ half-edges  attached to the vertex $v$.  Note that  $p_v + q_v = val(v)$, the valence of $\Gamma$ at $v$. 

 Denote by
$
 \cT_{g_v, (p_v, q_v)}
$
  the Teichm\"uller space of Riemann surface of genus $g_v$ with
 $p_v$ ordered marked points associated to tails,   and
$q_v$ unordered  marked points associated to half-edges.  Note that 
\[
\sum_{v\in V(\Gamma)} p_v = n, \qquad  \sum_{v\in V(\Gamma)}  q_v = 2 \# E(\Gamma).
\]
On the set of those unordered  marked points for all $v\in V(\Gamma)$, there is
a pairing relation defined by   $E(\Gamma)$. 
 Define 
\[
\cT_{\tilde \Gamma} = \prod_{v\in V(\Gamma)} \cT_{g_v, (p_v, q_v)},
\]
then there is a universal family of curves
\ba\label{uni:Gamma}
\xymatrix{
\cC_{\tilde \Gamma} \ar[d]^{\pi_{\tilde \Gamma}}  & \\
\cT_{\tilde \Gamma}  \ar@/_2pc/[u]_{ \sigma_{i}, \  i=1, 2, \cdots, n  }  \ar@/^2pc/[u]^{\{s^\pm_e |  e\in E(\Gamma)\}  }
}
\na
whose fiber at $([C_v, \{x_1, \cdots, x_{p_v}\}, \{y_1, \cdots, y_{q_v}\}, [f_v]] )_{v\in V(\Gamma)}$ is
 the disjoint union of Riemann surfaces
 \[
 \bigsqcup_{v\in V(\Gamma)}   (C_v, \{x_1, \cdots, x_{p_v}\}, \{y_1, \cdots, y_{q_v}\}).
 \]
 Here the  set of  sections $\{ \sigma_{i}, \  i=1, 2, \cdots, n\}$ is defined by  ordered 
 marked points $$ \bigsqcup_{v\in V(\Gamma)}    \{x_1, \cdots, x_{p_v}\}, $$ and the set of sections
 $\{s^\pm_e | e\in E(\Gamma)\}$ is defined by  paired unordered marked points   $$ \bigsqcup_{v\in V(\Gamma)}    \{y_1, \cdots, y_{q_v}\}. $$  
 
 On the universal family (\ref{uni:Gamma}),  an isomorphism between two fiber curves
 \[
 f:   \bigsqcup_{v\in V(\Gamma)}   (C_v, \{x_1, \cdots, x_{p_v}\}, \{y_1, \cdots, y_{q_v}\}) \longrightarrow 
  \bigsqcup_{v\in V(\Gamma)}   (C'_v, \{x'_1, \cdots, x'_{p_v}\}, \{y'_1, \cdots, y'_{q_v}\})
  \]
  means an isomorphism $f:      \bigsqcup_{v\in V(\Gamma)}   C_v \to  f:   \bigsqcup_{v\in V(\Gamma)}   C'_v$
  which preserves the ordered marked points 
  \[
  f:   \bigsqcup_{v\in V(\Gamma)}    \{x_1, \cdots, x_{p_v}\}  \longrightarrow  \bigsqcup_{v\in V(\Gamma)}    \{x'_1, \cdots, x'_{p_v}\}
  \]
  and  preserves    the  paired sets $\bigsqcup_{v\in V(\Gamma)}    \{y_1, \cdots, y_{q_v}\}$
  and $\bigsqcup_{v\in V(\Gamma)}    \{y'_1, \cdots, y'_{q_v}\}$. 

By identifying $s^+_e$ and $s^-_e$ in $\mc C_{\tilde\Gamma}$ we have a new family of
nodal curves $\mc C_\Gamma$. Set $\mc T_{\Gamma}=\cT_{\tilde\Gamma}$. Then we have a 
universal family 
\ba\label{uni:Gamma'}
\xymatrix{
\cC_{ \Gamma} \ar[d]^{\pi_{\tilde \Gamma}}  & \\
\cT_{ \Gamma}  \ar@/_2pc/[u]_{ \sigma_{i}, \  i=1, 2, \cdots, n  }  \ar@/^2pc/[u]^{\{s_e |  e\in E(\Gamma)\}  }
}.
\na

Applying  the groupoid  construction  in \S\ref{sec:groupoidization}, there is a canonical proper \'etale groupoid 
$$
\sfM_{ [\Gamma]}=(\mc A_{\mc C_{\Gamma}}{(\mc T_{\Gamma})}\rightrightarrows \mc T_{\Gamma})
$$ associated to the universal  family
 (\ref{uni:Gamma'}).  The orbit space of  $\sfM_{[ \Gamma]}$ is $M_{[\Gamma]}$, the set of  isomorphism classes of genus $g$ stable curve  $C$  with $n$-marked points whose weighted dual graph is  $\Gamma$. 
 
Further it is known  that the closure of $M_{[\Gamma]}$ in  the coarse moduli space $\overline{M}_{g, n}$ is given by
\[
\overline{M}_{[\Gamma]} = \bigsqcup_{[\Gamma'] \preceq [\Gamma], [\Gamma']\in \cS_{g, n}} M_{[\Gamma']}.
\]
This says that $\overline{M}_{g,n}$ is an orbifold  stratified space.
 \end{proof}

\begin{remark}\label{nodal:universal curve}
 Associated to the universal family $\mc C_{ \Gamma}\to \mc T_{\Gamma}$. We have  a proper 
 \'etale Lie groupoid description for
the   universal curve $\mc C_\Gamma$
\[
\mathsf C_{[\Gamma]}=(\mc A_{\mc C_{\Gamma}}(\mc C_{\Gamma})
\rightrightarrows  \mc C_{\Gamma})
\]
There is a natural submersion $\mathsf C_{[\Gamma]}\to \sfM_{ [\Gamma]}$. 
 \end{remark}

\begin{remark}\label{not_global_groupoid} Let 
$ M^1 \rightrightarrows M^0 $ be the disjoint of the proper \'etale Lie groupoids over $S_{g, n}$.  Then there is no sensible toplogy of $M^0$  ad $M^0$ such that $ M^1 \rightrightarrows M^0 $  
is a topological groupoid. 
  Hence, this does not fit  with our assumption in  \S\ref{2.4}. However, we shall explain how the notion of good orbifold atlases can still be found following the remarks at the end of Section 2. 
\end{remark}

\section{Horocycle structures associated to marked  or nodal points}

 In this section, we  introduce a notion of horocycle structures and show that each stratum in $\overline M_{g, n}$, there exists a smooth family of horocycle structures in the orbifold sense.

 Let 
 $(C, \{p_1, \cdots, p_n\})$ be a Riemann surface  of genus $g$ and $n$-marked points. Equivalently, we may consider the punctured
 surface of $C$ with marked points removed. 
 $$
 C^*=C\setminus\{p_1,\ldots,p_n\}.
 $$ 
 When $C^*$ is of  negative
 Euler characteristic , $C^*$ has  a complete hyperbolic metric
 $\rho$ (a complete metric of constant curvature
 $-1$)  under which its punctures become   cusps
 of the hyperbolic metric. Locally, the geometry of these cusps can be described by applying the
 uniformization theorem  to $C$  at  a puncture  $p$ as  follows ( see \cite{Wol2007}).

Let $\mathbb H=\{\zeta=x+iy|y>0\}$ be the half upper surface with the Poincare metric 
$$
\rho_o(\zeta)=\frac{1}{(Im(\zeta))^2}d\zeta d\bar\zeta.
$$ 
 Let
$$
\mathfrak D=\dfrac{ \{\zeta \in \mathbb H|  Im (\zeta) \geq 1\}}{\zeta \sim \zeta + 1}
$$
be a cylinder, and $\rho_o$ induces a metric on $\mathfrak D$, which is still denoted  by $\rho_o$.

An interesting result is that for any punctured point $p_i$ there exists a
neighborhood $U_i$ of $p_i$ in $C$ such that 
$$
(U_i,\rho)\cong (\mathfrak D,\rho_o),
$$
moreover, all $U_i$'s are disjoint with each other.  Hence, we 
fix an identification
$$
\zeta_i: U_i\to  \mathfrak D.
$$
 Let $z_i=e^{2\pi i\zeta_i}$. 
Then $z_i$ is a local complex
coordinate on $U_i$ with $z_i(p_i)=0$ and
$$
U_i=\{z_i|\ln |z_i|<-2\pi\}.
$$
 For any $c\in (0, e^{-2\pi})$, the circle $|z_i|=c$ is called
a closed horocycle at $p_i$ with
hyperbolic length $\ell(c)=-2\pi/\log c$. We call $(U_i,z_i)$ a horodisc of $p_i$. 
We define a metric $\mfky_{can}^i$
on $T_{p_i}C$ such that 
\begin{equation}\label{can_metric}
\mfky_{can}^i(\frac{\partial}{\partial z_i},
\frac{\partial}{\partial z_i})=1.
\end{equation}
The {\em canonical horodisc }  of $C$ at $p$ of radius $\delta$  is defined to be
\[
\cD^\delta_p  =\{  x\in C |   | z(x) |  <  \delta\}
\]
for $\delta \leq e^{-2\pi}$.  Denote by
\[
T_pC(\delta) =\{z \in T_pC |  |z|_{\mfky_{can}} < \delta\}.
\]
Under the canonical complex coordinate $z$ near $p$, 
$z$ induces a canonical biholomorphic map
\[
z:  \cD^\delta_p   \longrightarrow  T_pC(\delta).
\]
We denote the inverse map by the horo-map:
$$
hor: T_pC(\delta)\to C.
$$
We call the triple $(\mfky_{can}, \delta, hor)$ the canonical
horocycle structure at $p$.

\begin{definition}\label{horo:str} Let $C$ be a genus  $g$ stable curve with  $n$-marked points $\{p_1, \cdots, p_n\}$ and 
nodal points. Let
 $\pi: \tilde C \to C$ be its normalization.   If $p$ is  a marked point, a {\bf horocycle structure}  on $C$ at $p$ is a metric
 $\mfky$ on $T_pC$ with
 a holomorphic  embedding 
\[
h:  T_p C(\delta) \longrightarrow C
\]
for some small $\delta$ such that
$h(0) =p$ and   $(d h)_0$ is the identity operator. We denote the horocycle structure
by the triple $(\mfky,\delta,h)$. The local inverse map of $h$ is called 
the {\bf   horo-coordinate }   associated to the horocycle structure  at $p$. 
 If $p$ is  a   nodal point, a horocycle structure on $C$ at $p$  is given by a pair of
 metrics $\mfky^\pm$, a pair of small constants $\delta^\pm$ and a pair of maps
 \[
 h^{\pm} = \pi \circ h_{p^\pm}: T_{p^\pm}\tilde C(\delta^\pm)  \longrightarrow \tilde C \longrightarrow C,
\]
where $(\mfky^\pm,\delta^\pm, h^\pm)$ are the horocycle structures on $\tilde C$ at $p^\pm =\pi^{-1}(p)$. 
\end{definition}

\begin{remark} Given a   genus  $g$ stable curve $C$ with  $n$-marked points $\{p_1, \cdots, p_n\}$ and 
nodal points, then $(\mfky_{can}, e^{-2\pi}, hor)$ constructed above defines a   horocycle structure at  each marked point and each
nodal point. We refer it as a canonical horocycle structure. A general horocycle structure on $C$  in Definition \ref{horo:str}  differs from this canonical
one by  a small perturbation.   All  these horodiscs provided by a horocycle structure  are mutually disjoint by taking a  fixed small radius $\delta$.
\end{remark}

The following lemma is  obvious from the definition of horocycle structure.

\begin{lemma}\label{glue:horo}
Let $(\mfky_0,\delta_0,h_0)$ and $(\mfky_1,\delta_1,h_1)$ be two horocycle structures on $C$ at point $p$, then there exists a small constant $\delta<\min(\delta_0,\delta_1)$ such that 
$$
((1-t)\mfky_0+t\mfky_1, \delta, (1-t)h_0+th_1)
$$
defines a family of horocycle structures at $p$ for $t\in [0, 1]$. Here, we use the canonical horocycle coordinate at the neighborhood of $p$.
\end{lemma}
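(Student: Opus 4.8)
The plan is to verify directly that the convex combination satisfies the three defining conditions of a horocycle structure (Definition \ref{horo:str}) at the point $p$, after shrinking the radius. First I would set up coordinates: using the canonical horo-coordinate $z$ at a neighbourhood of $p$ (as the lemma instructs), identify $T_pC$ with $\C$ via $\partial/\partial z$, so that $\mfky_0,\mfky_1$ become positive multiples of $|dz|^2$ and each $h_j: T_pC(\delta_j)\to C$ is a holomorphic embedding with $h_j(0)=p$ and $(dh_j)_0=\mathrm{id}$. The family of metrics $\mfky_t=(1-t)\mfky_0+t\mfky_1$ is then manifestly a smooth family of (positive-definite) metrics on $T_pC$ for $t\in[0,1]$, since the space of positive-definite inner products is convex; this handles the metric component with no difficulty.

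Next I would treat the family of maps $h_t=(1-t)h_0+th_1$. Here the key observations are: (i) each $h_t$ is holomorphic, being a convex combination of holomorphic maps expressed in the fixed holomorphic coordinate $z$; (ii) $h_t(0)=(1-t)p+tp=p$ in the $z$-coordinate, i.e. $h_t(0)=p$; and (iii) $(dh_t)_0=(1-t)(dh_0)_0+t(dh_1)_0=(1-t)\mathrm{id}+t\,\mathrm{id}=\mathrm{id}$. The only nontrivial point is that $h_t$ is an \emph{embedding} on a small enough disc. For this I would use the inverse function theorem together with a compactness argument in $t$: since $(dh_t)_0=\mathrm{id}$ is invertible for every $t\in[0,1]$ and $h_t$ depends continuously (indeed smoothly) on $(t,z)$, there is a uniform radius $\delta>0$ and a uniform neighbourhood of $p$ such that $h_t$ restricts to a biholomorphism of $T_pC(\delta)$ onto its image for all $t\in[0,1]$ simultaneously. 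Concretely, $\|(dh_t)_z-\mathrm{id}\|$ is small for $|z|$ small uniformly in $t$ by continuity on the compact set $[0,1]$, which forces injectivity on $T_pC(\delta)$ by the standard mean-value estimate, and openness of the image follows from the invertibility of the differential.

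The main (and only real) obstacle is this uniform-in-$t$ choice of $\delta$: one must ensure the radius does not degenerate as $t$ ranges over $[0,1]$, which is exactly what the compactness of $[0,1]$ buys us, so I expect it to be routine rather than genuinely hard. For the nodal case one simply applies the above argument to each branch $p^\pm$ of the normalization $\pi:\tilde C\to C$ separately, taking $\delta$ to be the minimum of the two radii so obtained and composing with $\pi$; the convexity passes through the normalization since $\pi$ is fixed. This yields, for every $t\in[0,1]$, a genuine horocycle structure $((1-t)\mfky_0+t\mfky_1,\,\delta,\,(1-t)h_0+th_1)$ on $C$ at $p$, and the construction is visibly smooth in $t$, completing the proof.
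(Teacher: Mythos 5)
Your proof is correct and is exactly the direct verification the paper has in mind --- the paper in fact states this lemma without proof, declaring it ``obvious from the definition of horocycle structure,'' so your write-up simply supplies the routine details (convexity of positive-definite metrics, linearity of the conditions $h(0)=p$ and $(dh)_0=\mathrm{id}$ in the fixed canonical coordinate, and a uniform-in-$t$ injectivity radius via compactness of $[0,1]$). The only point worth making explicit is that $\delta$ must also be taken small enough that both images $h_j(T_pC(\delta))$ lie inside the canonical horodisc, so that the convex combination $(1-t)h_0+th_1$ is actually defined in the canonical coordinate; this follows at once from $h_j(0)=p$ and continuity.
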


Fix a choice function on $\cS_{g, n}$ such that each $[\Gamma] \in  \cS_{g, n}$ is represented by a weighted dual graph $\Gamma$.  Let $\tilde \Gamma$ be the normalization of $\Gamma$, obtained by severing all the edges in
$\Gamma$.  The proper \'etale Lie groupoid  $\mathsf M_{[\Gamma]}$  for $M_{[\Gamma]}$ is obtained by the universal family 
(\ref{uni:Gamma'}) 
 of stable curve of the weighted dual graph given by $\Gamma$ by the canonical construction in Section 
 \ref{sec:groupoidization}.   Recall that this  universal family  
 is obtained from the universal family   (\ref{uni:Gamma}).  
 Note that the universal family  $\cC_\Gamma \to \cT_\Gamma$ comes with sections 
 \[
 \{\sigma_{i} , s_e|  i=1, \cdots, n, e \in E(\Gamma)\}
 \]
defined  by marked points and
 nodal points. 
 Let $\cC^*_\Gamma$  be  the family of punctured Riemann surfaces, given by
  $ \cC_\Gamma$ with all these sections removed. Then 
 there exists a Euclidean metric on the vertical tangent bundle $T^v \cC^*_\Gamma$ of
 $\hat \pi:  \cC^*_\Gamma \to \cT_\Gamma$ such that the restriction of this metric on each fiber 
 $\hat\pi^{-1}(b)$ is the canonical hyperbolic metric.  Moreover, this smooth family of hyperbolic metrics 
 is invariant under the action of the   proper \'etale Lie groupoid  $\mathsf M_{[\Gamma]}$ .

   Let $L_i$ be the complex line over 
 $\cT_\Gamma$ defined by the pull-back of $T^v \cC_{\tilde \Gamma}$ by the section $\sigma_i$ for $i=1, \cdots, n$,  and $L_e^{\pm}$ be the pair of  complex line over 
 $\cT_\Gamma$ defined by the pull-back of $T^v \cC_{\tilde \Gamma}$ by the section $s_e^\pm$ for $e\in E(\Gamma)$. Here $T^v \cC_{\tilde \Gamma}$ is the vertical tangent bundle of the universal family (\ref{uni:Gamma}).    
 The next proposition implies that there is a smooth family of horocycle structures at  each of marked or nodal points of the universal family
 $\cC_\Gamma \to \cT_\Gamma$.
 
 \begin{proposition} \label{canonhol_hor}  The line bundles $\{   L_i,   L^+_e, L^-_e\}$  are complex line bundles over the proper \'etale Lie groupoid  $\mathsf M_{[\Gamma]}$, and will be denoted by $\mathsf L_i$,  $\mathsf L^+_e$ and $\mathsf L^-_e$ accordingly. 
 Let $\mathsf L$ be one of complex line bundles in 
 $\{\mathsf L_i, \mathsf L_e^\pm |    i=1, \cdots, n, e \in E(\Gamma)\}$.
 There exist a canonical  metric on $\mathsf L$ defined by the smooth family of hyperbolic metrics
 on $T^v\cC^*_\Gamma$. Moreover,  for any $\delta \in (0, e^{-2\pi})$,
 the canonical horo-coordinate  associated to these 
 hyperbolic metrics defines a  smooth strict  morphism 
 \[
 \mathsf{hor_{can}}:  \mathsf L (\delta) \longrightarrow \mathsf C_\Gamma. 
 \]
  \end{proposition}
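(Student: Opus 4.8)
The plan is to build the metric and the horo-map fibrewise over $\cT_\Gamma$ out of the cusp uniformisation, and then establish the two properties that promote the construction to the groupoid $\mathsf M_{[\Gamma]}$: smooth dependence on the base point, and equivariance under the arrows. For the descent to $\mathsf M_{[\Gamma]}$ I would invoke Proposition~\ref{bundle:gpoid}: it suffices to produce, for every arrow $(b_1,\psi,b_2)$, a linear isomorphism of the fibre over $b_2$ with the fibre over $b_1$, smooth in the arrow and compatible with composition and units. An arrow is a biholomorphism $\psi\colon C_{b_2}\to C_{b_1}$ carrying the ordered marked points and the paired nodal points of $C_{b_2}$ to those of $C_{b_1}$, hence inducing an automorphism $\bar\psi$ of $\Gamma$. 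Differentiating $\psi$ at the $i$-th marked point gives $d\psi\colon L_i|_{b_2}\to L_i|_{b_1}$ (the $i$-th marked point is preserved because the marking is ordered); differentiating at a half-node carries the fibre of $L_e^\pm$ at $b_2$ isomorphically onto the fibre, at $b_1$, of the line attached to the half-node $\psi(s_e^\pm(b_2))$. These assignments are smooth and functorial, so by Proposition~\ref{bundle:gpoid} each $\mathsf L_i$ is a complex line bundle over $\mathsf M_{[\Gamma]}$ and the collection $\{\mathsf L_e^\pm\}$ is an $\mathsf M_{[\Gamma]}$-equivariant family of line bundles, the arrows being allowed to permute its members along $\bar\psi$.

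Next, the canonical metric. For $b\in\cT_\Gamma$ and a puncture $p$ of $C_b^*$ (marked or half-nodal), uniformisation of the hyperbolic metric $\rho_b$ on $C_b^*$ produces, as recalled at the start of this section, a cusp coordinate $z_b$ at $p$, well defined up to multiplication by a unit complex number; the normalisation $\mfky_{can}(\partial/\partial z_b,\partial/\partial z_b)=1$ of~\eqref{can_metric} is insensitive to this phase, so it defines a genuine Hermitian metric on $\mathsf L|_b=T_pC_b$. Smooth dependence of $z_b$, hence of this metric, on $b$ follows from the smooth dependence of the cusp uniformisation on the hyperbolic structure together with the smoothness of the family $\{\rho_b\}$ on $T^v\cC_\Gamma^*$ (cf.~\cite{Wol2007}). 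Invariance under $\mathsf M_{[\Gamma]}$ is automatic, since an arrow $\psi$ is an isometry of the punctured hyperbolic surfaces and therefore carries $z_{b_2}$ to a unit multiple of $z_{b_1}$, so $d\psi$ matches the two unit normalisations; this uses the $\mathsf M_{[\Gamma]}$-invariance of $\{\rho_b\}$ established above.

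Finally, the horo-map. For each $b$ and each puncture $p$ set $hor_b\colon \mathsf L(\delta)|_b\to C_b$, $w\,\partial/\partial z_b\mapsto z_b^{-1}(w)$, composing with the normalisation map $\pi$ in the nodal case. A unit change of $z_b$ rescales the coordinate of a point by the same unit that it rescales $w$, so $hor_b$ is again phase-independent; computing $d(hor_b)_0$ shows $(\mfky_{can},\delta,hor_b)$ is exactly the canonical horocycle structure of Definition~\ref{horo:str} at $p$. Assembling over $p$ and $b$ gives a map $\mathsf L(\delta)\to\cC_\Gamma$ on objects, smooth by the same dependence result; on arrows send $(v_{b_1},(b_1,\psi,b_2),v_{b_2})$ to $(hor_{b_1}(v_{b_1}),\psi,hor_{b_2}(v_{b_2}))$. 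That this defines a strict morphism to $\mathsf C_\Gamma=(\mc A_{\cC_\Gamma}(\cC_\Gamma)\rightrightarrows\cC_\Gamma)$, i.e. the groupoid $\mathsf C_{[\Gamma]}$ of Remark~\ref{nodal:universal curve}, reduces to the identity $\psi\bigl(hor_{b_2}(v)\bigr)=hor_{b_1}(d\psi\cdot v)$, which is the naturality of the cusp coordinate under isomorphisms of punctured surfaces; compatibility with source, target, units and composition is then immediate.

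The main obstacle is the single genuinely analytic input underlying two of the steps above: the smooth (indeed real-analytic) dependence of the cusp coordinate $z_b$ on $b\in\cT_\Gamma$, which controls both the smoothness of the metric and the smoothness of $\mathsf{hor_{can}}$. I would isolate this as a lemma and deduce it from the smooth dependence on moduli of the solution of the constant-curvature $-1$ equation, applied to the family of hyperbolic metrics on $T^v\cC_\Gamma^*$ already produced (cf.~\cite{Wol2007}); granting it, the equivariance, the horocycle-structure axioms, and the functoriality of $\mathsf{hor_{can}}$ all follow formally.
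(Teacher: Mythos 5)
Your proposal is correct and follows essentially the same route as the paper: the paper's proof simply observes that the only nontrivial point is the smoothness of $\mathsf{hor}$, which it attributes to Lemma 1.1(iv) of \cite{WolpII} together with the $\mathsf M_{[\Gamma]}$-invariance of the family of hyperbolic metrics. You have correctly isolated exactly that analytic input (smooth dependence of the cusp coordinate on the point of $\cT_\Gamma$) as the one lemma to be imported from Wolpert's work, and the remaining verifications you supply (phase-independence of the normalisation and of the horo-map, equivariance under arrows, functoriality of the strict morphism) are the routine parts the paper leaves implicit.
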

   \begin{proof}
The only nontrivial part is the smoothness of $\mathsf{hor}$. This follows from Lemma 1.1 (iv)  in
\cite{WolpII}) and the invariance of the hyperbolic metrics under the action of the  proper \'etale Lie groupoid  $\mathsf M_{[\Gamma]}$.
\end{proof}
 
\section{Gluing  data and good orbifold gluing structures  for moduli spaces of stable curves}

With these preparations in Sections 4 and 5, we come to the construction of good orbifold structures for the
orbfiold stratified space
\[
\overline M_{g, n} = \bigsqcup_{[\Gamma] \in \cS_{g, k }} M_{[\Gamma]}
\]
with a choice  function on $\cS_{g, n}$, and the canonical proper \'etale Lie groupoid 
$\mathsf M_{[\Gamma]}$ for $M_{[\Gamma]}$. 
 
 We   start  with  the  orbifold gluing bundle over 
  $\Gl^{[\Gamma]} \to \sfM_{[\Gamma]}$.  
Define the orbifold gluing bundle   to be
 $$
 \mathsf{GL}^{[\Gamma]} =\bigoplus_{e\in E(\Gamma)} 
  \mathsf L_{e}^+ \otimes \mathsf L_{e}^- \longrightarrow  \sfM_{[\Gamma]},
$$ 
where $\mathsf L_e^\pm$ is defined in Proposition \ref{canonhol_hor}.  We remark, this bundle, as a proper
\'etale Lie groupoid can be identified with the canonical groupoid associated to $ \sfM_{[\Gamma]}$-action
on the complex vector bundle
\[
GL^{[\Gamma]} = \bigoplus_{e\in E(\Gamma)} 
  L_{e}^+ \otimes   L_{e}^- \longrightarrow  \cT_{\Gamma},
  \]
  as described in Section \ref{sec:groupoidization}.

 Given any   $[\Gamma']\prec [\Gamma]$, let $\mc E([\Gamma],[\Gamma'])$ be the collection of  subsets 
of $E(\Gamma)$ such that the contraction of $\Gamma$ along  each element in
$\mc E([\Gamma],[\Gamma'])$ is isomorphic to
$\Gamma'$. Set
\[
\mathsf{GL}^{[\Gamma]}_{[\Gamma']} = \bigsqcup_{ I  \in  \mc E([\Gamma],[\Gamma'])} 
 \prod_{e\in I  }  \left( \mathsf L_{e}^+ \otimes \mathsf L_{e}^- \right)^\times,
 \]
 where  $\left( \mathsf L_{e}^+ \otimes \mathsf L_{e}^- \right)^x$ is the $\C^\times $-bundle obtained from
$\mathsf L_{e}^+ \otimes \mathsf L_{e}^-$ with the zero section removed.

 \begin{lemma} \label{prop:gluing-bundle}  Denote by $\cS^{[\Gamma]}$   the partially ordered set 
$
\cS^{[\Gamma]} = \{ [\Gamma'] \in \cS_{g, n}|[ \Gamma'] \succeq [\Gamma]\}.
$
 There exists a  canonical  linear stratification  on $\mathsf{GL}^{[\Gamma]}$ with respect to $\cS^{[\Gamma]}$ 
 $$
 \mathsf{GL}^{[\Gamma]}=\bigsqcup_{[\Gamma']\in \mc S^{[\Gamma]}}
 \mathsf{GL}^{[\Gamma]}_{[\Gamma']}
 $$
 so that 
 $\mathsf{GL}^{[\Gamma]}$ is an orbifold gluing bundle  with a canonical metric (Cf.  Condition  A'). 
 \end{lemma}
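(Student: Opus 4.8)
The plan is to exhibit $\mathsf{GL}^{[\Gamma]}$ as a linearly stratified vector bundle in the sense of Definitions \ref{linear:strata} and \ref{rmk:2.2}, with the stratification induced by the combinatorics of edge‐contractions. First I would fix a representative $\Gamma$ and set $m = \#E(\Gamma)$, so that the fibre of $GL^{[\Gamma]} = \bigoplus_{e\in E(\Gamma)} L_e^+\otimes L_e^-$ over $\cT_\Gamma$ is $V \cong \C^m$ with coordinate functions $(t_e)_{e\in E(\Gamma)}$ indexed by the edges. The power set $\cN = 2^{E(\Gamma)}$ carries its inclusion order, and the canonical stratification $V = \bigsqcup_{I\subset E(\Gamma)} V^{[I]}$ of Section \ref{2.1} records which edge‐coordinates are non‐zero; geometrically, the subset $I\subset E(\Gamma)$ of non‐vanishing gluing parameters is precisely the set of nodes that get smoothed under grafting.

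Next I would define the index set map: for each $[\Gamma']\in \cS^{[\Gamma]}$ (i.e. $[\Gamma']\succeq[\Gamma]$) put
\[
\cJ_{[\Gamma']} = \mc E([\Gamma],[\Gamma']) = \{\, I\subset E(\Gamma) \mid Ctr_I(\Gamma) \cong \Gamma' \,\}.
\]
I would then verify the two axioms listed after Definition \ref{linear:strata}. For the partition property $2^{E(\Gamma)} = \bigsqcup_{[\Gamma']\in\cS^{[\Gamma]}} \cJ_{[\Gamma']}$: every subset $I\subset E(\Gamma)$ yields a well‐defined stable graph $Ctr_I(\Gamma)$ whose isomorphism class lies in $\cS^{[\Gamma]}$ (contracting edges only raises the class in $\prec$), and two subsets land in the same $\cJ_{[\Gamma']}$ iff their contractions are isomorphic — so the $\cJ_{[\Gamma']}$ are disjoint and cover. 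For the order‐compatibility $[\Gamma']\prec[\Gamma'']$ iff every $I\in\cJ_{[\Gamma']}$ is contained in some $J\in\cJ_{[\Gamma'']}$: this is just the statement that further contraction $Ctr_J(\Gamma) = Ctr_{J\setminus I}(Ctr_I(\Gamma))$ corresponds to enlarging the contracted edge set, combined with the definition of $\prec$ on $\cS_{g,n}$ via contractions. Finally, all elements of a fixed $\cJ_{[\Gamma']}$ have the same cardinality, namely $\#E(\Gamma) - \#E(\Gamma')$, since isomorphic graphs have equally many edges. This gives the linear stratification $\mathsf{GL}^{[\Gamma]} = \bigsqcup_{[\Gamma']\in\cS^{[\Gamma]}} \mathsf{GL}^{[\Gamma]}_{[\Gamma']}$, matching the formula in the statement because $\mathsf{GL}^{[\Gamma]}_{[\Gamma']} = \bigsqcup_{I\in\cJ_{[\Gamma']}} \prod_{e\in I}(\mathsf L_e^+\otimes\mathsf L_e^-)^\times$ is exactly $\bigsqcup_{I\in\cJ_{[\Gamma']}} V^{[I]}$ fibred over $\cT_\Gamma$.

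It then remains to check the structural requirements: (a) the structure group of $GL^{[\Gamma]}$ reduces to $GL(V,\cS^{[\Gamma]})$; (b) the stratification is invariant under the arrows of $\sfM_{[\Gamma]}$, so that it descends to the orbifold gluing bundle; and (c) the canonical metric exists. For (a), the bundle is a direct sum of line bundles indexed by $E(\Gamma)$, so its transition functions are diagonal in the $t_e$, hence automatically preserve every $V^{[I]}$ and a fortiori every $V_{[\Gamma']}$; thus the structure group already sits inside $GL(V,\cS^{[\Gamma]})$. For (b), the action of $\mc A_{\mc C_\Gamma}(\cT_\Gamma)$ on $GL^{[\Gamma]}$ is induced by isomorphisms of fibre curves, which permute nodes and hence permute the edge‐coordinates $\{t_e\}$ by graph automorphisms of $\Gamma$; such a permutation sends $V^{[I]}$ to $V^{[\sigma(I)]}$ and, since $Ctr_{\sigma(I)}(\Gamma)\cong Ctr_I(\Gamma)$, preserves each $\cJ_{[\Gamma']}$ and therefore each stratum $\mathsf{GL}^{[\Gamma]}_{[\Gamma']}$ — so by the remark following Proposition \ref{bundle:gpoid} the groupoid $\mathsf{GL}^{[\Gamma]}$ is linearly stratified. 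For (c), equip each $\mathsf L_e^\pm$ with the canonical hyperbolic metric from Proposition \ref{canonhol_hor}, take the induced tensor‐product metrics on $\mathsf L_e^+\otimes\mathsf L_e^-$ and their orthogonal direct sum; this is smooth, $\sfM_{[\Gamma]}$‐invariant, and compatible with the stratification (the coordinate identification $V\cong\C^m$ is orthonormal for it). The dimension bookkeeping — that $\dim \mathsf{GL}^{[\Gamma]}_{[\Gamma']}$ agrees with $\dim M_{[\Gamma']}$, which is the hypothesis needed later in Condition A' — follows from the standard count $\dim \cT_\Gamma = \dim \cT_{\Gamma'} - \#(\text{contracted edges})$ together with the fibre dimension $\#I$ over each component $V^{[I]}$.

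I expect the only genuinely delicate point to be (b), verifying that the groupoid arrows respect the stratification, since one must be careful that an isomorphism of nodal curves over $\cT_\Gamma$ — though it may non‐trivially permute the edges via $\mathrm{Aut}(\Gamma)$ — indeed maps a subset $I$ of nodes to a subset $\sigma(I)$ with $Ctr_{\sigma(I)}(\Gamma)\cong Ctr_I(\Gamma)$; everything else is a formal unwinding of the definitions and a dimension count, which I would not spell out in detail.
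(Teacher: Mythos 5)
Your proposal is correct and follows essentially the same route as the paper: the paper's proof consists precisely of the observation that $2^{E(\Gamma)} = \bigsqcup_{[\Gamma'] \in \cS^{[\Gamma]}} \cE([\Gamma],[\Gamma'])$ is a partition with all elements of each $\cE([\Gamma],[\Gamma'])$ having the same cardinality, and declares the remaining verifications straightforward. You have simply spelled out those remaining verifications (order-compatibility, structure-group reduction, groupoid invariance, metric, dimension count), all correctly.
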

 \begin{proof}
Notice that the power set
\[
2^{E(\Gamma)} = \bigsqcup_{[\Gamma'] \in \cS^{[\Gamma]}  }  \mc E([\Gamma],[\Gamma'])
\]
and all elements in $ \mc E([\Gamma],[\Gamma'])$ have the same cardinality. The proof of this lemma is 
straightforward. 
\end{proof}

Next we describe the orbifold gluing datum for the orbifold gluing bundle $ \mathsf{GL}^{[\Gamma]}$ over
any open full subgroupoid $\mathsf U_{[\Gamma]}$ of $\mathsf M_{[\Gamma]}$. We remark  that 
we only need to show the stratum-wise gluing map $\phi^{[\Gamma]}_{[\Gamma']}$  being generalised 
isomorphism. We review how the standard grafting construction can be performed using the 
  smooth family of horocycle structures on  the universal family
 $\cC_\Gamma \to \cT_\Gamma$. By Proposition \ref{canonhol_hor}, we assume that the family of horocycle
 structures along the fiber of $\cC_\Gamma$ is invariant under the action of $\mathsf M_{[\Gamma]}$.
 
   Let $\eta $ be a point in $\cT_\Gamma$ represented by  a nodal curve $C_\eta  $ in $\mc C_\Gamma$, whose   normalization at a nodal point $p_e$ for $e\in E(\Gamma)$  is 
      \[
      (C^\pm, p_{e^\pm}). 
      \]
   Using the horocycle structure of $C_\eta$ at  the nodal point $p$,   for 
   $\delta\in (0, e^{-2\pi})$, 
 we have two  holomorphic  maps    
       \[
   h^\pm:   T_{p_{e^\pm}} C^\pm ( \delta) \longrightarrow C^\pm
   \]
   such that $ h^\pm (0) = p_{e^\pm}$ and the differentials $d    h^\pm$ at $0$  are the identity maps. These provide
 complex local coordinates at $p_{e^+}$ and $p_{e^-}$ of $C^+$ and $C^-$ respectively, denoted by $z$ and $w$. 
 Suppose that $C_\eta$ is decomposed to be two parts
 $$
 C^{out}_{\eta}=C \backslash \left( \{x \in C |  |  z(x) |  \leq {\delta}\} \cup \{y \in C  |  |  w(y) | \leq{\delta}\} \right)
 $$
 and 
 $$
 C^{in}_\eta=\left( \{x \in C |  |  z(x) |  \leq {\delta}\} \cup \{y \in C  |  |  w(y) | \leq{\delta}\} \right).$$
 The gluing construction with respect to a horocycle structure $ h^\pm$ on $C$ is given as the following.  
     Let $t\in  T_{p_{e^+}} C^+ \otimes  T_{p_{e^-}} C^-$ such that $0< |t|< \delta^2$. Define
     \[
     C^\ast_{\eta,t }= C \backslash \left( \{x \in C |  |  z(x) |  \leq \dfrac{|t|}{\delta}\} \cup \{y \in C  |  |  w(y) | \leq \dfrac{|t|}{\delta}\} \right).
     \]
     Then $C_{\eta,t }= C^*_{\eta,t}/\sim$ where  $x\sim y$ in $C^*_{\eta,t} $  if and only if
  \ba\label{plumb}   
  \dfrac{|t|}{\delta} <   |  z(x)|  < \delta, \   \dfrac{|t|}{\delta} <  |w(y)| < \delta  \   \  \text{and} \  \ 
   z(x) w(y) = t.
  \na
   Then for $t\neq 0$,  $C_{\eta,t}$ is a smooth Riemann surface with the weight dual graph given by the
   contraction of $\Gamma$ at $e\in E(\Gamma)$.     
   
   Given $I \in  \mc E([\Gamma],[\Gamma'])$, we can perform the above grafting construction on $C_\eta$ 
 simultaneously for $e\in I$. Suppose $I =\{e_1, \cdots, e_k\}  \in \mc E_{\Gamma,[\Gamma']}$. Let $\eta\in \mc T_{\Gamma}$ and 
$C_\eta$ be the representing curve. For $e_i$ let $y_i\in C_\eta$ be the corresponding nodal point and suppose that horodiscs 
associated to $e_i^\pm$ are $|z_i|<\delta$ and $|w_i|<\delta$ respectively. Denote 
    \[
   \vec{t}=  (t_1, t_2, \cdots, t_k) \in  \prod_{e\in I}  (L^+_e\otimes L^-_e)^\times_{\eta}.
\]
 Then for any $\epsilon<\delta^2$ and $|t_i|<\epsilon$ we have
 \ba\label{cut}
 C_{\eta,\vec{t}} ^* = C_\eta \backslash \left(\bigsqcup_{i=1}^k  \{ x\in C_\eta|     |  z_i(x) | 
  \leq \dfrac{|t_i|}{\delta}\} \cup \{y \in C_\eta  |  |  w_i(y) | \leq \dfrac{|t_i|}{\delta}\} \right). 
 \na
 Then the grafting  construction defines a   nodal curve 
 $C_{\eta,\vec{t}} =  C_{\eta,\vec{t}} ^*/\sim$ where 
  $x\sim y$ in $C^*_{\eta,\vec{t}} $  if and only if for some $i$ 
\[ 
  \dfrac{|t_i|}{\delta} <   |  z_i(x)|  < \delta, \   \dfrac{|t_i|}{\delta} <  |w_i(y)| < \delta  \   \  \text{and} \  \ 
   z_i(x) w_i(y) = t_i.
\]
Therefore, we get 
  a smooth family of nodal curve of type $[\Gamma']$ parametrised by
  $\vec{t}$ in the fiber of 
  \[
  GL^{[\Gamma]}_{[\Gamma']} (\epsilon)  =  \bigsqcup_{ I  \in  \mc E([\Gamma],[\Gamma'])}  \prod_{e\in I } (L^+_e\otimes L^-_e)^\times   (\epsilon) 
  \]
   at $\eta$ for a sufficiently small $\epsilon$. 
   This grafting construction can  also be performed in a small 
   neighbourhood  $U_\eta$ of $\eta$ in $\cT_\Gamma$ with respect a smooth  family of horocycle structure 
   at  the section $s_e$ of $\cC_\Gamma$ over $U_\eta$.   Note this family of nodal curves  is completely determined by the
   family of horocycle structures on the universal family over  $U_\eta$.  Denote the resulting  family of nodal curves  by
   \[
   \cC_{U_\eta, \Gamma', \epsilon} \longrightarrow  GL^{[\Gamma]}_{[\Gamma']}|_{U_\eta}  (\epsilon)
   \]
   which has the topological type $[\Gamma']\in \cS_{g, n}$.  From the grafting construction, we know that 
   this family of nodal curves is a trivial family away from the grafting regions.

   Note that $U_\eta$ can be chosen such that is invariant under the action of $\mathsf M_{[\Gamma]}$ 
   on $\cT_\Gamma$. By the invariance of horocycle
   structures under the action of $\mathsf M_{[\Gamma]}$,  for a sufficiently small $\epsilon$, we get 
   a generalised morphism 
   \ba\label{DM-local:glue}\xymatrix{
   \phi^{[\Gamma]}_{[\Gamma']}:  \mathsf{GL} ^{[\Gamma]}_{[\Gamma']} (\epsilon)|_{\mathsf U_\eta} 
\ar@{-->}[r] &  \ \mathsf M_{[\Gamma']}  }
   \na
   using the canonical construction of proper \'etale Lie groupoid in Section \ref{sec:groupoidization}.  Here
   $\mathsf U_\eta$ is the open full subgroupoid of $\mathsf M_{[\Gamma]}$ associated to $U_\eta$.
   By Theorem A in \cite{WoWol}, we know that this generalised morphism is a generalised isomorphism onto its image using the real analytic coordinate functions on $\cT_{\Gamma'}$ from the gluing parameters 
   in $GL^{[\Gamma]}_{[\Gamma']} (\epsilon)|_{\mathsf U_\eta}$. 
   
   . 
   In particular, given  a proper open full subgroupoid $\mathsf U_\Gamma$ of $\mathsf M_{[\Gamma]}$, there is
  an orbifold gluing map
    \ba\label{DM:glue}\xymatrix{
   \phi^{[\Gamma]}_{[\Gamma']}:  \mathsf{GL} ^{[\Gamma]}_{[\Gamma']} (\epsilon)|_{\mathsf U_\Gamma } 
  \ar@{-->}[r] &  \ \mathsf M_{[\Gamma']},} 
   \na
 which is a generalized isomorphism   onto its image  (see Remark \ref{HS:morphism}).

 \begin{proposition} [Verification of Condition B'] Given  a proper open full subgroupoid $\mathsf U_\Gamma$ of $\mathsf M_{[\Gamma]}$, 
let   $\mathsf D=\mathsf 
D([\Gamma], [\Gamma'], \epsilon)$   and $\mathsf R=\mathsf 
R([\Gamma], [\Gamma'], \epsilon)$
be  the  domain  and the image  of  the gluing map $ \phi_{[\Gamma']}^{[\Gamma]}$ in (\ref{DM:glue}).   There exists a  stratified bundle map 
\ba\label{bundle:map:gpoid}\xymatrix{
\Phi^{[\Gamma]}_{[\Gamma']}:   N({\mathsf{GL}^{[\Gamma]}_{[\Gamma']}})|_{\mathsf D}
\ar@{-->}[r] &  \ \mathsf{GL}^{[\Gamma']}|_{\mathsf R}}
\na
 in the sense of generalised  morphism, such that the following diagram commutes
\[
  \xymatrix{
N(\Gl^{[\Gamma]}_{[\Gamma']})|_{\mathsf 
D}   \ar@{-->}[rr] ^{\Phi^{[\Gamma]}_{[\Gamma']}} \ar[d]  & &
\Gl^{[\Gamma']}  \ar[d]   \\
\mathsf D \ar@{-->}[rr] ^{\phi^{[\Gamma]}_{[\Gamma']}} &  & \sfM_{[\Gamma']}. }
\]
 Moreover, $\Phi^{[\Gamma]}_{[\Gamma']}$ is   a stratified bundle
isomorphism  onto its image $\Gl^{[\Gamma']}|_{\mathsf R}$ in the sense of generalised isomorphism.
\end{proposition}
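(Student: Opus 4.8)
The plan is to build $\Phi^{[\Gamma]}_{[\Gamma']}$ directly out of the grafting construction, component by component over $\mc E([\Gamma],[\Gamma'])$, by identifying the normal directions to $\mathsf{GL}^{[\Gamma]}_{[\Gamma']}$ inside $\mathsf{GL}^{[\Gamma]}$ with the fibre directions of $\mathsf{GL}^{[\Gamma']}$. First I would make the normal bundle explicit. By Lemma \ref{lemma2.2} applied to the linear stratification of Lemma \ref{prop:gluing-bundle}, over the component indexed by $I\in\mc E([\Gamma],[\Gamma'])$ — whose base is $\prod_{e\in I}(\mathsf L^+_e\otimes\mathsf L^-_e)^\times$ — the normal bundle $N(\mathsf{GL}^{[\Gamma]}_{[\Gamma']})$ is $\bigoplus_{e\in E(\Gamma)\setminus I}(\mathsf L^+_e\otimes\mathsf L^-_e)$, still pulled back from $\mathsf M_{[\Gamma]}$. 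Contracting $\Gamma$ along $I$ yields a canonical bijection $E(\Gamma)\setminus I\cong E(\Gamma')$, and since contracting along $I$ and then along $J\setminus I$ (for $I\subseteq J$) equals contracting along $J$, this bijection identifies the induced $\mc S^{[\Gamma']}$-stratification on $N(\mathsf{GL}^{[\Gamma]}_{[\Gamma']})$ with the canonical linear stratification of $\mathsf{GL}^{[\Gamma']}$, and is compatible with the component-wise embeddings $\tau^\nu$ of (\ref{db:include}); in particular any bundle map defined through this bijection is automatically stratum preserving.

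Next comes the geometric heart of the argument. Fix $\eta\in\cT_\Gamma$ with an $\mathsf M_{[\Gamma]}$-invariant neighbourhood $U_\eta$ and the grafted family $\cC_{U_\eta,\Gamma',\epsilon}\to GL^{[\Gamma]}_{[\Gamma']}|_{U_\eta}(\epsilon)$ of type $[\Gamma']$ from (\ref{cut}). The grafting carried out along the edges of $I$ is supported inside the plumbing annuli of the nodes $e\in I$; hence in a neighbourhood of each section $s_e$ with $e\in E(\Gamma)\setminus I$ — equivalently, at each node $e'$ of $\Gamma'$ — the grafted family is canonically the pullback of $\cC_\Gamma|_{U_\eta}$. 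Consequently the line bundle $L^\pm_{e'}$ attached to that node of the grafted family is, over $GL^{[\Gamma]}_{[\Gamma']}|_{U_\eta}(\epsilon)$, canonically the pullback of $L^\pm_e$ from $\cT_\Gamma$, and by Proposition \ref{canonhol_hor} this identification is real analytic in the plumbing parameters. Since $\phi^{[\Gamma]}_{[\Gamma']}$ is, by its construction in Section \ref{sec:groupoidization}, precisely the generalised classifying morphism of this grafted family relative to $\mathsf M_{[\Gamma']}$, we get a canonical real-analytic isomorphism $(\phi^{[\Gamma]}_{[\Gamma']})^*\mathsf L^\pm_{e'}\cong\mathsf L^\pm_e$ over $\mathsf{GL}^{[\Gamma]}_{[\Gamma']}(\epsilon)|_{\mathsf U_\eta}$. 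Tensoring over all $e\in E(\Gamma)\setminus I$ and using the bijection $E(\Gamma)\setminus I\cong E(\Gamma')$, I define $\Phi^{[\Gamma]}_{[\Gamma']}$ on the component $I$ to be the identity under these identifications; it then covers $\phi^{[\Gamma]}_{[\Gamma']}$ by construction and is fibrewise a $\C$-linear isomorphism.

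Finally I would assemble and upgrade. The map $\phi^{[\Gamma]}_{[\Gamma']}$ is realised by a strict morphism on a covering groupoid $\mathsf{GL}^{[\Gamma]}_{[\Gamma']}(\epsilon)|_{\mathsf U_\Gamma}[\cU]$ for a cover $\cU$ of the object space subordinate to the $U_\eta$'s; pulling $\cU$ back along the projection $N(\mathsf{GL}^{[\Gamma]}_{[\Gamma']})\to\mathsf{GL}^{[\Gamma]}_{[\Gamma']}(\epsilon)$ gives a covering groupoid of $N(\mathsf{GL}^{[\Gamma]}_{[\Gamma']})|_{\mathsf D}$, and the fibrewise-identity maps above glue to a strict morphism of this covering groupoid into $\mathsf{GL}^{[\Gamma']}$, since the identifications $(\phi^{[\Gamma]}_{[\Gamma']})^*\mathsf L^\pm_{e'}\cong\mathsf L^\pm_e$ are canonical and hence agree on overlaps. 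This is the desired generalised morphism $\Phi^{[\Gamma]}_{[\Gamma']}$, and the commuting square in the statement holds tautologically; stratum preservation for both stratifications is the content of the first paragraph. For the last assertion, $\Phi^{[\Gamma]}_{[\Gamma']}$ is a linear isomorphism on every fibre and covers $\phi^{[\Gamma]}_{[\Gamma']}$, which is a generalised isomorphism onto $\mathsf R$ by the discussion preceding (\ref{DM:glue}) (Theorem A in \cite{WoWol}); hence $\Phi^{[\Gamma]}_{[\Gamma']}$ is a stratified bundle isomorphism onto $\mathsf{GL}^{[\Gamma']}|_{\mathsf R}$ in the sense of generalised isomorphism. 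The step I expect to be the main obstacle is the second paragraph: checking that the grafted family is genuinely the pulled-back universal family near the un-grafted nodes, and that the induced line-bundle identification is both canonical — so that it descends to a morphism of groupoids, not merely of the underlying spaces — and real analytic in the plumbing coordinates, so that it is compatible with the generalised-morphism structure of $\phi^{[\Gamma]}_{[\Gamma']}$ coming from \cite{WoWol}; once this is in place, the remaining assembly with covering groupoids is routine.
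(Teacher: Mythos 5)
Your proposal is correct and follows essentially the same route as the paper: reduce to the local charts $\mathsf U_\eta$ via covering groupoids, use the fact that the grafted family is trivial away from the grafting annuli to identify the tangent lines at the surviving nodes (indexed by $E(\Gamma)\setminus I\cong E(\Gamma')$) with the pullbacks of the $\mathsf L^\pm_e$, and assemble the fibrewise-identity maps into a generalised morphism covering $\phi^{[\Gamma]}_{[\Gamma']}$. The paper routes this through a strict morphism $\Psi^{[\Gamma]}_{[\Gamma']}$ of universal curves and compresses your second paragraph into the sentence ``from the definitions of the normal bundle and the gluing bundle, we know that there is a stratified bundle map''; your version simply makes explicit the normal-bundle decomposition and line-bundle identification that this sentence relies on.
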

\begin{proof}  
From the above construction, we know that we cover $\mathsf U$ by a collection of full subgroupoids of  the form $\mathsf U_{\eta} $ for some $\eta\in \cT_\Gamma$.  Then by the definition of generalised morphisms and generalised isomorphisms, we only need to prove the proposition for
$D $ and $R$  being  the  domain  and the image  of  the gluing map $ \phi_{[\Gamma']}^{[\Gamma]}$ in (\ref{DM-local:glue}).  From the grafting construction, we have  a smooth  family  nodal curves of topological type $\Gamma'$,  denoted by 
\[
\cC (U_\eta, [\Gamma'], \epsilon) \to D(U_\eta,[\Gamma'],\epsilon):=
GL^{[\Gamma]}_{[\Gamma']}(\epsilon)|_{U_\eta}.
\]
 Hence, there is a  strict morphism 
 $\Psi_{[\Gamma']}^{[\Gamma]}: \mathsf 
 C (U_\eta, [ \Gamma'], \epsilon ) \to \mathsf 
 C_{[\Gamma']}$ (the universal family over $\sfM_{\Gamma'}$)  such that the following diagram commutes
\ba\label{local:iso}
 \xymatrix{
 \mathsf C (U_\eta, [\Gamma'], \epsilon ) 
   \ar@{-->}[rr]^{\Psi^{[\Gamma]}_{[\Gamma']}} \ar[d]  & &
\mathsf C_{\Gamma'}  \ar[d]   \\
\mathsf D(U_\eta,  [\Gamma'], \epsilon ) 
 \ar@{-->}[rr]^{\phi^{[\Gamma]}_{[\Gamma']}} &  & \sfM_{[\Gamma']},}
\na
which preserves  marked and nodal points. Note that $\mathsf
C (U_\eta,[ \Gamma'], \epsilon )
 \to \mathsf D$ is a trivial fibration away from the gluing region. From the definitions  of the normal bundle
$N({\Gl^{[\Gamma]}_{[\Gamma']}})$ and the gluing bundle $\Gl^{[\Gamma']} $, we know that there is a 
stratified bundle map $\Phi^{[\Gamma]}_{[\Gamma']}$ such that the diagram in Proposition commutes. It is clear that
 $\Phi^{[\Gamma]}_{[\Gamma']}$ is   a stratified bundle
isomorphism  onto its image.
 \end{proof}

 Let  the gluing map 
\[\xymatrix{
\phi^{[\Gamma]}:  \Gl^{[\Gamma]}(\epsilon)|_{\bU_{[\Gamma]}}  =  \bigsqcup_{[\Gamma']\in \cS_{[\Gamma]}} \Gl_{[\Gamma']}^{[\Gamma]} (\epsilon) |_{\bU_{[\Gamma]}} \ar@{-->}[r]&
 \bigsqcup_{[\Gamma']\in \cS_{[\Gamma]}}   \sfM_{[\Gamma']}  }
  \]
 be the union of $\phi^\Gamma_{\Gamma'}$ for $[\Gamma']\in \cS_{[\Gamma]}$. 
 Hence, we 
  get   an orbifold gluing datum   
  $$
  (\bU_{[\Gamma]},  \mfky^{[\Gamma]},
  \epsilon, \phi^{[\Gamma]}, \{\Phi_{[\Gamma']}^{[\Gamma]}\})
  $$ for any proper subgroupoid  $\bU_{[\Gamma]} $ in the sense of comments following Remark \ref{HS:morphism}.

 \begin{remark}
Suppose $\Gamma'$ is obtained from  $\Gamma$ by the contraction along $I\in \cE([\Gamma], [\Gamma'])$. As the family $\cC (U_\eta, [\Gamma'], \epsilon) \to D(U_\eta,[\Gamma'],\epsilon)$ is trivial away from the grafting regions. For any edge $e$ in  the compliment of $I$ in $E(\Gamma)$, the grafting construction
 carries the horocycle structure along the universal curves over $ \mathsf U_\eta$  at $s_e$ to  
\[
\cC (\mathsf U_\eta, [\Gamma'], \epsilon) \to D(\mathsf U_\eta,[\Gamma'],\epsilon). 
\]
Hence, by the commutative diagram (\ref{local:iso}) and local isomorphism of $\Psi^{[\Gamma]}_{[\Gamma']}$,
we get a smooth family of horocycle structure at the section $s_e$ along the that over  the image of the
orbifold gluing map $\phi^{[\Gamma]}_{[\Gamma']}$.  We emphasize that this horocycle structure is different from the canonical
horocycle structure  induced from $\mathsf C_{\Gamma'}$.  From the convexity property of horocycle structures (Cf.
Lemma \ref{glue:horo}, we know that any convex combination of these two horocycle structure provides a 
family of horocycle structure on the universal curves over the image of the 
orbifold gluing map $\phi^{[\Gamma]}_{[\Gamma']}$. We remark that this obversation is vital in obtaining the 
inward-extendibility condition for the existence of good orbifold gluing structures. 
 \end{remark}
 
Notice that the only non-trivial issue for the existence of good orbifold gluing structures as specified by
Definition \ref{good-orbi:gluing_atlas} is the inward-extendibility condition. We proceed to this final issue.

Let $\mathsf U_1\subset\subset\mathsf  U_2$ be a pair of boundary-type open full subgroupoids of $\mathsf M_{[\Gamma]}$.  Suppose that we have an orbifold  gluing datum 
$$
\mc A=(\mathsf U_2,\mfky,\epsilon,\phi^{[\Gamma]}, \{\Phi^{[\Gamma]}_{[\Gamma']}\})
$$
defined with respect to a family of 
 horocycle structures $(\mfky ,\delta, h )$ on the universal curve over $\mathsf U_2$ associated to each
 $e\in E(\Gamma)$.    Let $(\tilde\mfky ,\tilde\delta,\tilde h )$ be
 another  horocycle structures on $\sfM_{\Gamma}$ (for example, the canonical horocycle structures induced from hyperbolic metrics or induced from the gluing map). Such horocycle structures induce a  gluing datum
 $$
\widetilde{\mc A}=(\widetilde{\mathsf U},\tilde\mfky,\tilde\epsilon,\tilde\phi^{[\Gamma]}, \{\widetilde\Phi^{[\Gamma]}_{[\Gamma']}\}),
$$
 where $\widetilde{\mathsf U}$ can be any proper open  full subgroupoid of $\sfM_{[\Gamma]}$ such that
 \[
 \widetilde{\mathsf U} \cup \mathsf U_1 = \sfM_{[\Gamma]}.
 \]
 We want to sew these  two gluing data together.  This is equivalent to patch the horocycle structures. This can be easily done as follows using the  convexity property of horocycle structures.

Let $\beta_1$ and $\beta_2$ be an orbifold 
 partition of unity subordinated  to the cover  $\{\mathsf U_2,  \widetilde{\mathsf U} \}$ such that $\beta_1\equiv 1$ on $\mathsf U_1$. Then applying Lemma \ref{glue:horo}, we get
a  new horocycle structures along the universal curve over $\mathsf M_{[\Gamma]}$, from which we have 
a new  orbifold  gluing datum
$$
\widehat{\mc A}=(\sfM_{[\Gamma]}, \hat\mfky,\hat{\delta},
\hat{\phi}^{[\Gamma]}, \{\Phi^{[\Gamma]}_{[\Gamma']}\}).
$$
It is easy to see this is an inward-extension of $\mc A$.

Let $\mc {GL}$ be the collection of gluing data that 
given by the grafting construction using  horocycle
structures.  Then we have proved the following proposition. 
\begin{proposition}
$\mc{GL}$ is a good  orbifold gluing structure.
\end{proposition}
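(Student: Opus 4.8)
The plan is to verify the four conditions of Definition~\ref{good-orbi:gluing_atlas} for $\mathcal{GL}$ in turn, noting first that two of the ingredients are already in hand: the orbifold gluing bundles $\mathsf{GL}^{[\Gamma]}$ with their canonical linear stratifications and metrics (Condition~A$'$) are supplied by Lemma~\ref{prop:gluing-bundle}, and a grafting gluing datum over every proper open full subgroupoid (Condition~B$'$) by the verification carried out above around \eqref{DM:glue}. Condition~(i) of Definition~\ref{good-orbi:gluing_atlas} I would then deduce from this: any $x\in\overline M_{g,n}$ lies in a unique stratum $M_{[\Gamma]}$, and a grafting datum over a proper open full subgroupoid $\mathsf U_{[\Gamma]}$ with $x\in|\mathsf U_{[\Gamma]}|$ has coarse gluing map a homeomorphism onto an open neighbourhood of $x$ --- the zero-section stratum $\mathsf{GL}^{[\Gamma]}_{[\Gamma]}=\mathsf U_{[\Gamma]}$ being mapped by the identity --- so the coarse images of the maps in $\mathcal{GL}$ cover $\overline M_{g,n}$.

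For (ii) I would argue that $\mathcal{GL}$ is closed under both inductions. Closure under restriction is immediate: restricting the defining family of horocycle structures to a smaller open full subgroupoid and shrinking $\epsilon$ produces a grafting datum of the same type. Closure under the induction for gluing maps is essentially the remark following \eqref{local:iso}: the grafted family $\cC(\mathsf U_\eta,[\Gamma'],\epsilon)$ is trivial away from the grafting regions, so for every edge $e$ outside the contracted set $I$ it carries a smooth family of horocycle structures at the section $s_e$; the local isomorphism $\Psi^{[\Gamma]}_{[\Gamma']}$ of \eqref{local:iso} transports these to a family of horocycle structures over the image of $\phi^{[\Gamma]}_{[\Gamma']}$, and one checks that the gluing datum induced from the given one over a proper open full subgroupoid of $\mathsf R([\Gamma],[\Gamma'])$ coincides with the grafting datum of this transported family. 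For the sewing property (iii), suppose two grafting data over open full subgroupoids $\mathsf U_1,\mathsf U_2$ of the same $\mathsf M_{[\Gamma]}$ coincide on $\mathsf U_1\cap\mathsf U_2$; they arise from families of horocycle structures $(\mfky_k,\delta_k,h_k)$ that agree there. I would take an orbifold partition of unity subordinate to $\{\mathsf U_1,\mathsf U_2\}$, form the pointwise convex combination, and apply Lemma~\ref{glue:horo}: after passing to a uniformly small radius this is again a family of horocycle structures over $\mathsf U_1\cup\mathsf U_2$ restricting to $(\mfky_k,\delta_k,h_k)$ on each $\mathsf U_k$, so grafting with it yields the sewed datum, which therefore lies in $\mathcal{GL}$.

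The hard part is (iv), the inward-extendibility, which is the argument already sketched in the paragraphs just before the statement. Given a boundary-type grafting datum $\mathcal A$ over $\mathsf U_2$ defined by $(\mfky,\delta,h)$, I would fix a global family of horocycle structures on the universal curve over $\mathsf M_{[\Gamma]}$ (for instance the canonical hyperbolic ones of Proposition~\ref{canonhol_hor}) together with a proper open full subgroupoid $\widetilde{\mathsf U}$ satisfying $\widetilde{\mathsf U}\cup\mathsf U_1=\mathsf M_{[\Gamma]}$, choose an orbifold partition of unity $\beta_1,\beta_2$ subordinate to $\{\mathsf U_2,\widetilde{\mathsf U}\}$ with $\beta_1\equiv1$ on $\mathsf U_1$, and apply Lemma~\ref{glue:horo} once more: the convex combination is a family of horocycle structures defined over all of $\mathsf M_{[\Gamma]}$ and equal to $(\mfky,\delta,h)$ on $\mathsf U_1$, so the grafting datum $\widehat{\mathcal A}$ it produces is a gluing datum over $\mathsf M_{[\Gamma]}$ agreeing with $\mathcal A$ over the boundary-type open $\mathsf U_1$ --- the required inward-extension, and again a member of $\mathcal{GL}$.

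The only genuinely delicate point in the whole argument is that this convex interpolation never leaves the class of grafting data, which is exactly what the horocycle formalism of Lemma~\ref{glue:horo} is built to guarantee; everything else reduces to bookkeeping about the grafting construction and the functoriality of the canonical groupoid construction of Section~\ref{sec:groupoidization}. Throughout I would use, as established around \eqref{DM:glue}, that each stratum-wise gluing map $\phi^{[\Gamma]}_{[\Gamma']}$ is a generalised isomorphism onto its image --- Theorem~A of \cite{WoWol} applied to the real-analytic plumbing coordinates on $\cT_{\Gamma'}$ --- so that the four conditions, once verified, genuinely produce a good orbifold gluing structure in the sense of Definition~\ref{good-orbi:gluing_atlas}.
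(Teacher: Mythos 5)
Your proposal is correct and follows essentially the same route as the paper: the paper likewise treats conditions (i)--(iii) as consequences of the grafting construction and the convexity of horocycle structures, and devotes its actual argument to the inward-extendibility, using exactly the partition-of-unity interpolation between the given horocycle family on $\mathsf U_2$ and an auxiliary one on $\widetilde{\mathsf U}$ via Lemma~\ref{glue:horo}. Your more explicit verification of (i)--(iii) fills in steps the paper declares non-issues, and is consistent with its reasoning.
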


As a corollary, we have a collection of orbifold  gluing data
\begin{equation}\label{good_atlas}
\{ \mc A_{[\Gamma]}=(\sfM_{[\Gamma]}, \mfky^{[\Gamma]},
\epsilon^{[\Gamma]},\phi^{[\Gamma]},
\Phi^{[\Gamma]}) | [\Gamma]\in \mc S_{g,n}\}, 
\end{equation}
which are  gluing-compatible.

Following the discussion after Remark \ref{HS:morphism},  we have a smooth  orbifold  structure on the Deligne-Mumford moduli space $\overline M_{g, n}$ given by the canonical  proper \'etale Lie groupoid
\[
\left(
(\mc A_{\overline{\cC}_{g,n}}(\overline{\cT}_{g,n}))\rightrightarrows\overline{\cT}_{g,n}\right) 
\]
using the notation from Section \ref{sec:groupoidization}, where $\overline{\cT}_{g,n} = \bigsqcup_{[\Gamma]\in \mc S_{g,n}}  GL^{[\Gamma]}(\epsilon)$, and 
$
\overline{\cC}_{g,n} = \bigsqcup_{[\Gamma]\in \mc S_{g,n}} \cC_{\Gamma, \epsilon}
$
 with 
$\cC_{\Gamma, \epsilon}$ 
being  the universal curve over $GL^{[\Gamma]}(\epsilon)$ associated to
the orbifold gluing datum 
$ 
\mc A_{[\Gamma]}=(\sfM_{[\Gamma]}, \mfky^{[\Gamma]},
\epsilon^{[\Gamma]},\phi^{[\Gamma]},
\Phi^{[\Gamma]})
$ 
  from (\ref{good_atlas}).

 \vskip .2in

\noindent  
{\bf Acknowledgments.}  This work is  supported by   the Australian Research Council   Grant
  and  the National Natural Science Foundation of China Grant.   

\end{document}